\newtheorem{assumption}{Assumption}
\newtheorem{proposition}{Proposition}
\newtheorem{lemma}{Lemma}
\newtheorem{corollary}{Corollary}
\newtheorem{example}{Example}
\newcommand{\pstar}{\mathbb{P}_{\theta^*}}
\newcommand{\estar}{\mathbb{E}_{\theta^*}}
\newcommand{\ve}{\varepsilon}
    \DeclareMathOperator*{\argmax}{\arg\!\max}
\begin{document}

\bibliographystyle{asa}

{
\singlespacing
\title{\textbf{Asymptotic Properties of the Maximum Likelihood Estimator
in Regime Switching Econometric Models\thanks{This research was supported by the Natural Science and Engineering Research Council of Canada and JSPS KAKENHI Grant Number JP17K03653.}}
\author{Hiroyuki Kasahara\\
Vancouver School of Economics\\
University of British Columbia\\
hkasahar@mail.ubc.ca \and Katsumi Shimotsu\\
Faculty of Economics \\
University of Tokyo\\
shimotsu@e.u-tokyo.ac.jp
}}
\maketitle
}
\begin{abstract}
Markov regime switching models have been widely used in numerous empirical applications in economics and finance. However, the asymptotic distribution of the maximum likelihood estimator (MLE) has not been proven for some empirically popular Markov regime switching models. In particular, the asymptotic distribution of the MLE has been unknown for models in which some elements of the transition probability matrix have the value of zero, as is commonly assumed in empirical applications with models with more than two regimes. This also includes models in which the regime-specific density depends on both the current and the lagged regimes such as the seminal model of \citet{hamilton89em} and switching ARCH model of \citet{hamiltonsusmel94joe}. This paper shows the asymptotic normality of the MLE and consistency of the asymptotic covariance matrix estimate of these models.
\end{abstract}

Keywords: asymptotic distribution; autoregressive conditional heteroscedasticity; maximum likelihood estimator; Markov regime switching

JEL classification numbers: C12, C13, C22

\section{Introduction}
Since the seminal contribution of \citet{hamilton89em}, Markov regime switching models have become a popular framework for applied empirical work because they can capture the important features of time series such as structural changes, nonlinearity, high persistence, fat tails, leptokurtosis, and asymmetric dependence \citep[e.g.,][]{evanswachtel93jme, hamiltonsusmel94joe, gray96jfe, simszha06aer, inoueokimoto08jjie, angbekaert02rfs, okimoto08jfqa, daisingletonyang07rfs}. Surveys of applications of Markov regime switching models in economics and finance are provided by, for example, \citet{hamilton08palgrave, hamilton16hdbk} and \citet{angtimmermann12annual}.

Consider the Markov regime switching model defined by a discrete-time stochastic process $\{Y_k,X_k\}$ written as
\begin{equation}\label{MS-model}
Y_k = f_{\theta}(Y_{k-1},\ldots,Y_{k-s},X_k;\ve_k),
\end{equation}
where $\{\ve_k\}$ is an independent and identically distributed sequence of random variables, $\{Y_k\}$ is an inhomogeneous $s$-order Markov chain on a state space $\mathcal{Y}$ conditional on $X_k$ such that the conditional distribution of $Y_k$ only depends on $X_k$ and the lagged $Y$'s, $X_k$ is a first-order Markov process in a state space $\mathcal{X}$, and $f_\theta$ is a family of functions indexed by a finite-dimensional parameter $\theta\in\Theta$. In (\ref{MS-model}), the Markov chain $\{X_k\}$ is not observable.

Surprisingly, the asymptotic distribution of the maximum likelihood estimator (MLE) of the Markov regime switching model (\ref{MS-model}) has not been fully established in the existing literature. \citet{brr98as} and \citet{jp99as} derive the asymptotic normality of the MLE of hidden Markov models in which the conditional distribution of $Y_k$ depends on $X_k$ but not on the lagged $Y$'s. For hidden Markov models and Markov regime switching models with a finite state space, the consistency of the MLE has been proven by \citet{leroux92spa}, \citet{francq98stat}, and \citet{krishnamurthy98jtsa}.

In an influential paper, \citet{dmr04as} [DMR hereafter] establish the consistency and asymptotic normality of the MLE in autoregressive Markov regime switching models (\ref{MS-model}) with a nonfinite hidden state space $\mathcal{X}$ under two assumptions. First, DMR assume that the conditional distribution of $Y_k$ does not depend on the lagged $X_k$'s. Specifically, on page 2259, DMR assume that
\begin{quote}
for each $n \geq 1$ and given $\{Y_k\}_{k=n-s}^{n-1}$ and $X_n$, $Y_n$ is conditionally independent of $\{Y_k\}_{k=-s+1}^{n-s-1}$
and $\{X_k\}_{k=0}^{n-1}$. 
\end{quote}
Second, DMR assume in their Assumption A1(a) that the transition density of $X_k$ is bounded away from 0.

These two assumptions together rule out regime switching models in which some elements of the transition probability matrix take the value of zero. However, empirical researchers often assume that some elements of the transition probability matrix are identically equal to zero when they estimate regime switching models with more than two regimes. For example, \citet{kimmorleypiger05jae} estimate a three-regime model of U.S. GDP growth in which some elements of the transition probability matrix are restricted to be zero because ``the three regimes corresponding to expansion, recession and recovery always occur in that order'' (see also \citet{boldin96snde}). Similarly, \citet{dahlquistgray00jie} estimate a three-regime model of short-term interest rates for France and Italy while restricting some elements of the transition probability matrix to be zero and \citet{davidveronesi2013jpe} estimate a six-regime model of inflation, earnings growth, consumption growth, S\&P 500 P/E ratios, the three-month Treasury bill rate, and one- and five-year Treasury bond yields in which the transition probability matrix is parameterized with five parameters with some elements restricted to zero. Assumption A1(a) of DMR does not hold in these papers.
 
The two assumptions imposed by DMR also rule out models in which the conditional density $Y_k$ depends on both the current and the lagged regimes. Suppose that we specify $X_k$ in (\ref{MS-model}) as
\begin{equation}\label{MS-model-X}
X_k=(\widetilde X_{k},\widetilde X_{k-1},\ldots,\widetilde X_{k-p+1}),
\end{equation}
where $p \geq 2$, and $\widetilde X_{k}$ follows a first-order Markov process and is called the \textit{regime}. Then, the transition density of $X_k$ inevitably has zeros. For example, when $p=2$ and $X_k=(\tilde X_k,\tilde X_{k-1})$, we have $\Pr\left(X_{k+1}=(i',j')|X_{k}=(i,j)\right)=0$ when $j'\neq i$. Consequently, the asymptotic distribution of the MLE has not been proven for some popular Markov regime switching models including the seminal model of \citet{hamilton89em} and switching ARCH (SWARCH) model of \citet{hamiltonsusmel94joe}. 
\begin{example}[\citet{hamilton89em}]\label{example_1} 
Consider the following model:
\begin{equation}\label{hamilton}
Y_k = \mu_{\tilde X_k} + u_k\quad\text{with}\quad u_k = \sum_{\ell=1}^{p-1} \gamma_\ell u_{k-\ell} + \sigma \varepsilon_k\quad\text{for\ $p\geq 2$},
\end{equation}
where $\varepsilon_k\sim$ i.i.d. $N(0,1)$ and $\tilde X_k$ follows a Markov chain on $\tilde{\mathcal{X}}=\{1,2,\ldots,M\}$ with $\Pr(\tilde X_k=j|\tilde X_{k-1}=i)=p_{ij}$, where $M$ represents the number of regimes.
\citet{hamilton89em} estimates model (\ref{hamilton}) with $M=2$ and $p=5$ by using data on U.S. real GNP growth.
\end{example} 
\citet{mcconnelperez-quiros00aer} and \citet{camachoperez-quiros07sdne} estimate an augmented model (\ref{hamilton}) that allows the standard deviation parameter $\sigma$ in (\ref{hamilton}) to be regime-dependent.

\begin{example}[SWARCH model of \citet{hamiltonsusmel94joe}]\label{example_2} 
Consider the following model:
\begin{align*} 
Y_k &= \mu + \gamma_y Y_{k-1} + \sigma_{\tilde X_k} h_k \varepsilon_{k} \quad\text{with}\quad
h_k^2 = \gamma_0+\sum_{\ell=1}^{p-1}\gamma_{\ell} \left(h_{k-\ell} \varepsilon_{k-\ell}\right)^2\quad\text{for\ $p\geq 2$},
\end{align*} 
where $\varepsilon_k\sim$ i.i.d. $N(0,1)$ or Student $t$ with $v$ degrees of freedom, and $\tilde X_k$ follows a Markov chain on $\tilde{\mathcal{X}}=\{1,2,\ldots,M\}$ with $\Pr(\tilde X_k=j|\tilde X_{k-1}=i)=p_{ij}$. 
\end{example} 
\begin{example}[Bounce-back effect model of \citet{kimmorleypiger05jae}]\label{example_3} 
Consider the following model:
\[ 
\Delta Y_k = \mu_{\tilde X_k} + \lambda \sum_{j=1}^{p-1} \tilde X_{k-j} + \varepsilon_k \quad\text{with}\quad
\varepsilon_{k} \sim \text{i.i.d}\ N(0,1),
\]
where $\tilde X_k$ follows a Markov chain on $\tilde{\mathcal{X}}=\{1,2,\ldots,M\}$ with $\Pr(\tilde X_k=j|\tilde X_{k-1}=i)=p_{ij}$. \citet{kimmorleypiger05jae} use this model with $p=6$ to capture the post-recession ``bounce-back'' effect in U.S. quarterly GDP.
\end{example} 
In examples \ref{example_1}--\ref{example_3}, the transition probability of $X_k=(\tilde X_k,\ldots,\tilde X_{k-p+1})'$ has zeros when $p\geq 2$. Therefore, Assumption A1(a) of DMR is violated. As discussed on pages 2257--2258 of DMR, Assumption A1(a) is crucial for their Corollary 1 (page 2262) that establishes the \textit{deterministic} geometrically decaying bound on the mixing rate of the conditional chain, $X|Y$. As DMR recognize on page 2258, this deterministic nature of the bound is vital to their proof of the asymptotic normality of the MLE.
 
This paper shows the consistency and asymptotic normality of the MLE of the Markov regime switching model in which some elements of the transition probability matrix of $X_k$ are zero. To the best of our knowledge, there exists no rigorous proof in the literature of the asymptotic normality of the MLE of these regime switching models, even though empirical researchers often assume that some elements of the transition probability matrix are zero and the models of \citet{hamilton89em} and \citet{hamiltonsusmel94joe} are popular in applied work. This is an important gap in the literature to be filled because empirical researchers regularly make inferences based on the presumed asymptotic normality \citep[e.g.,][]{goodwin93jbes, garciaperron96restat, hamiltonlin96jae, fong97fejm, ramchandsasmel98jef, maheumccurdy2000jbes, mcconnelperez-quiros00aer, edwardssusmel01jde, camachoperez-quiros07sdne}. This paper therefore provides the theoretical basis for the statistical inferences associated with these models.

To derive the asymptotic normality of the MLE, we first establish a bound on the mixing rate of the conditional chain, $X|Y$, in Lemma \ref{corollary_1}. Our bound is written as a product of random variables, where all but finitely many of them are strictly less than 1. Consequently, the mixing rate of the conditional chain is geometrically decaying almost surely. We then use this mixing rate to show that the sequence of the conditional scores and conditional Hessians given the $m$ past periods converge to the conditional score and conditional Hessian given the ``infinite past'' as $m \to \infty$. Given these results, we show the asymptotic normality of the MLE under standard regularity assumptions by applying a martingale central limit theorem to the score function (Proposition \ref{prop_score}) as well as by proving a uniform law of large numbers for the observed Fisher information (Proposition \ref{prop_hessian}). These results extend those in DMR to an empirically important class of models where the transition density has the value of zero. Another feature of the present study is that we introduce an additional weakly exogenous regressor, $W_k$.
 
We also relax the assumption in DMR on the regime-specific density. DMR assume that the regime-specific density is uniformly bounded with respect to $X_k$ and $\theta$, whereas we only assume the existence of the first moment of the supremum of the logarithm of the regime-specific density with respect to $X_k$ and $\theta$. Unbounded densities are used in other analyses \citep[e.g.,][]{zindewalsh08et} and empirical studies. 
\begin{example}[Markov regime-switching conditional duration (MS-CD) model]\label{example_4} 
Consider the following model:
\begin{equation}\label{MSCD}
Y_k = (\mu_{X_k} + \beta Y_{k-1}) \varepsilon_k,
\end{equation}
where $\varepsilon_k$ follows the standardized Weibull distribution with $E(\varepsilon_k)=1$, $\mu_{X_k}$ is a regime-dependent parameter, and $X_k\in \{1,2,\ldots,M\}$ is a regime in period $k$. The density of $Y_k$ conditional on $Y_{k-1}$ and $X_k$ is given by $g_{\theta} (y_k|Y_{k-1},X_k) = \frac{\gamma}{\lambda_k(Y_{k-1},X_k)} \left(\frac{y_k}{\lambda_k(Y_{k-1},X_k)}\right)^{\gamma-1} \exp\left\{- \left(\frac{y_k}{\lambda_k(Y_{k-1},X_k)} \right)^{\gamma}\right\}$, where $\lambda_k(Y_{k-1},X_k) = \frac{\mu_{X_k} + \beta Y_{k-1}}{\Gamma(1+1/\gamma)}$. In this model, the regime-specific density is unbounded when $\gamma<1$.
\end{example} 
Our simulations based on the model (\ref{MSCD}) show that the asymptotic distribution provides a good approximation of the finite sample behavior of the MLE even when the regime-specific density is unbounded.
 
In regime switching models, testing for the number of regimes (number of elements in $\mathcal{X}$) has been an unsolved problem because the standard asymptotic analysis of the likelihood ratio test statistic (LRTS) breaks down. In testing the null hypothesis of no regime switching, the asymptotic behavior of the LRTS has been investigated by \citet{hansen92jae} and \citet{garcia98ier}; \citet{carrasco14em} propose an information matrix-type test and \citet{chowhite07em} derive the asymptotic distribution of the quasi-LRTS. Recently, \citet{quzhuo17wp} derive the asymptotic distribution of the LRTS of testing the null hypothesis of no regime switching under some restrictions on the transition probabilities of regimes and \citet{rabah12wp} compares the finite sample performance of the bootstrapped LRTS with the test of \citet{carrasco14em}. \citet{kasaharashimotsu18markov} derive the asymptotic distribution of the LRTS for testing the null hypothesis of $M$ regimes against the alternative hypothesis of $M + 1$ regimes for any $M\geq 1$ and show the asymptotic validity of the parametric bootstrap.

The remainder of this paper is organized as follows. Section 2 introduces the notation, model, and assumptions. Section 3 derives the bound on the mixing rate of the conditional chain, $X|Y$. Section 4 derives the consistency of the MLE, and the asymptotic normality of the MLE is shown in Section 5. Section 6 reports the simulation results. Appendix A collects the proofs and Appendix B collects the auxiliary results.

\section{Model and assumptions}\label{sec:model}

Our notation largely follows the notation in DMR. Let $:=$ denote ``equals by definition.'' For a $k\times 1$ vector $x = (x_1,\ldots,x_k)'$ and a matrix $B$, define $|x| := \sqrt{x'x}$ and $|B| := \sqrt{\lambda_{\max}(B'B)}$, where $\lambda_{\max}(B'B)$ denotes the largest eigenvalue of $B'B$. For a $k\times 1$ vector $a=(a_1,\ldots,a_k)'$ and a function $f(a)$, let $\nabla_{a}^2 f(a) := \nabla_{aa'}f(a)$. For two probability measures $\mu_1$ and $\mu_2$, the total variation distance between $\mu_1$ and $\mu_2$ is defined as $\|\mu_1-\mu_2\|_{TV}:=\sup_{A}|\mu_1(A)-\mu_2(A)|$. $\|\cdot\|_{TV}$ satisfies $\sup_{f(x):0 \leq f(x) \leq 1}|\int f(x) \mu_1(dx)- \int f(x) \mu_2(dx)| = \|\mu_1-\mu_2\|_{TV}$ and $\sup_{f(x):\max_x|f(x)| \leq 1}|\int f(x) \mu_1(dx)- \int f(x) \mu_2(dx)| = 2 \|\mu_1-\mu_2\|_{TV}$ for any two probability measures $\mu_1$ and $\mu_2$ (e.g., \citet[][Proposition 4.5]{levin09book}). Let $\mathbb{I}\{A\}$ denote an indicator function that takes the value of one when $A$ is true and zero otherwise. For a metric space $\mathcal{A}$, let $\mathcal{A}^k$ denote the $k$-fold product space of $\mathcal{A}$. $\mathcal{C}$ denotes a generic finite positive constant whose value may change from one expression to another. Let $a \vee b :=\max\{a,b\}$ and $a \wedge b :=\min\{a,b\}$. Let $\lfloor x \rfloor$ denote the largest integer less than or equal to $x$, and define $(x)_+ := \max\{x,0\}$. For any $\{x_i\}$, we define $\sum_{i=a}^b x_i:=0$ and $\prod_{i=a}^b x_i:=1$ when $b<a$. ``i.o.'' stands for ``infinitely often.'' All the limits below are taken as $n \to \infty$ unless stated otherwise.
 
We consider the Markov regime switching process defined by a discrete-time stochastic process $\{(X_k,Y_k,W_k)\}$, where $(X_k,Y_k,W_k)$ takes the values in a set $\mathcal{X}\times \mathcal{Y}\times \mathcal{W}$ with the associated Borel $\sigma$-field $\mathcal{B}(\mathcal{X}\times\mathcal{Y}\times\mathcal{W})$. We use $p_\theta(\cdot)$ to denote densities with respect to the probability measure on $\mathcal{B}(\mathcal{X}\times\mathcal{Y}\times\mathcal{W})^{\otimes \mathbb{Z}}$. For a stochastic process $\{U_k\}$ and $a<b$, define ${\bf U}_{a}^b:=(U_a,U_{a+1},\ldots,U_b)$. Denote $\overline{\bf Y}_{k-1}:=(Y_{k-1},\ldots,Y_{k-s})$ for a fixed integer $s$ and $\overline {\bf Y}^b_{a}:=(\overline {\bf Y}_{a},\overline {\bf Y}_{a+1},\ldots,\overline {\bf Y}_{b})$. Define $Z_k:=(X_k,\overline{\bf Y}_{k})$. Let $Q_\theta(x,A):=\mathbb{P}_{\theta}(X_k \in A|X_{k-1}=x)$ denote the transition kernel of $\{X_k\}_{k=0}^\infty$. Let $Q_\theta^r(x,A):=\mathbb{P}_{\theta}(X_k \in A|X_{k-r}=x)$ denote the $r$-step transition kernel of $\{X_k\}_{k=0}^\infty$.

We now introduce our assumptions, which mainly follow the assumptions in DMR.

\begin{assumption} \label{assn_a1}
(a) The parameter $\theta$ belongs to $\Theta$, a compact subset of $\mathbb{R}^{q}$, and the true parameter value $\theta^*$ lies in the interior of $\Theta$. (b) $\{X_k\}_{k=0}^\infty$ is a Markov chain that lies in a compact set $\mathcal{X} \subset \mathbb{R}^{d_x}$. (c) For all $\theta \in \Theta$, $Q_\theta(x,\cdot)$  and $Q_\theta^r(x,\cdot)$ have densities $q_\theta(x,\cdot)$ and $q_\theta^r(x,\cdot)$, respectively, with respect to a \emph{finite} dominating measure $\mu$ on $\mathcal{B}(\mathcal{X})$ such that $\mu(\mathcal{X})=1$, and $\sigma_+^0:=\sup_{\theta \in \Theta} \sup_{x,x' \in \mathcal{X}} q_\theta(x,x') < \infty$. (d) There exists a finite $p \geq 1$ such that $0<\sigma_-:=\inf_{\theta \in \Theta} \inf_{x_{k-p},x_k \in \mathcal{X}} q_\theta^p(x_{k-p},x_k)$ and $\sigma_+:=\sup_{\theta \in \Theta} \sup_{x_{k-p},x_k \in \mathcal{X}} q_\theta^p(x_{k-p},x_k) < \infty$. (e) $\{(Y_k,W_k)\}_{k=-s+1}^\infty$ takes the values in a set $\mathcal{Y}\times \mathcal{W} \subset \mathbb{R}^{d_y} \times \mathbb{R}^{d_w}$. 
\end{assumption}

\begin{assumption} \label{assn_a2}
(a) For each $k\geq 1$, $X_k$ is conditionally independent of $({\bf X}_{0}^{k-2},\overline {\bf Y}_{0}^{k-1},{\bf W}_{0}^\infty)$ given $X_{k-1}$. (b) For each $k \geq 1$, $Y_k$ is conditionally independent of $({\bf Y}^{k-s-1}_{-s+1}, {\bf X}_{0}^{k-1},{\bf W}_{0}^{k-1},{\bf W}_{k+1}^\infty)$ given $(\overline {\bf Y}_{k-1}, X_k, W_k)$, and the model of the conditional distribution of $Y_k$ has a density $g_\theta(y_k|\overline{\bf Y}_{k-1}, X_k, W_k)$ with respect to a $\sigma$-finite measure $\nu$ on $\mathcal{B}(\mathcal{Y})$. (c) ${\bf W}_{1}^{\infty}$ is conditionally independent of $(\overline {\bf Y}_{0}, X_{0})$ given $W_0$. (d) $\{(Z_k,W_k)\}_{k=0}^{\infty}$ is a strictly stationary ergodic process. 
\end{assumption} 
\begin{assumption}\label{assn_a3}
For all $y'\in \mathcal{Y}$, $\overline{\bf y}\in \mathcal{Y}^s$, and $w \in \mathcal{W}$, $0<\inf_{\theta\in\Theta}\inf_{x\in\mathcal{X}} g_{\theta}(y'|\overline{\bf y},x,w)$ and $\sup_{\theta \in \Theta}\sup_{x\in\mathcal{X}} g_{\theta}(y'|\overline{\bf y},x,w)<\infty$.
\end{assumption}
Assumption \ref{assn_a1}(c) is also assumed on page 2258 of DMR. This assumption excludes the case where $\mathcal{X} = \mathbb{R}$ and $\mu$ is the Lebesgue measure but allows for continuously distributed $X_k$ with finite support. If multiple $p$'s satisfy Assumption \ref{assn_a1}(d), we define $p$ as the minimum of such $p$'s. Assumption \ref{assn_a1}(d) implies that the state space $\mathcal{X}$ of the Markov chain $\{X_k\}$ is $\nu_{p}$-small for some nontrivial measure $\nu_p$ on $\mathcal{B}(\mathcal{X})$. Therefore, for all $\theta \in \Theta$, the chain $\{X_k\}$ is aperiodic and has a unique invariant distribution and is uniformly ergodic \citep[][Theorem 16.0.2]{meyntweedie09book}. Assumptions \ref{assn_a2}(a)(b) imply that $Z_k$ is conditionally independent of $({\bf Z}_0^{k-2},{\bf W}_{0}^{k-1},{\bf W}_{k+1}^{\infty})$ given $(Z_{k-1},W_k)$; hence, $\{Z_k\}_{k=0}^{\infty}$ is a Markov chain on $\mathcal{Z}:=\mathcal{X}\times\mathcal{Y}^s$ given $\{W_k\}_{k=0}^{\infty}$. Under Assumptions \ref{assn_a2}(a)--(c), the conditional density of ${\bf Z}_0^n$ given ${\bf W}_0^n$ is written as $p_\theta({\bf Z}_0^n|{\bf W}_0^n)= p_{\theta}(Z_0|W_0) \prod_{k=1}^n p_\theta(Z_k|Z_{k-1},W_k)$. Because $\{(Z_k,W_k)\}_{k=0}^\infty$ is stationary, we extend $\{(Z_k,W_k)\}_{k=0}^\infty$ to a stationary process $\{(Z_k,W_k)\}_{k=-\infty}^\infty$ with doubly infinite time. We denote the probability and associated expectation of $\{(Z_k,W_k)\}_{k=\infty}^\infty$ under stationarity by $\mathbb{P}_\theta$ and $\mathbb{E}_\theta$, respectively.\footnote{DMR use $\overline{\mathbb{P}}_\theta$ and $\overline{\mathbb{E}}_\theta$ to denote the probability and expectation under stationarity because their Section 7 deals with the case when $Z_0$ is drawn from an arbitrary distribution. Because we assume $\{(Z_k,W_k)\}_{k=\infty}^\infty$ is stationary throughout this paper, we use notations such as $\mathbb{P}_\theta$ and $\mathbb{E}_\theta$ without an overline for simplicity.} Assumption \ref{assn_a3} is stronger than Assumption A1(b) in DMR, which assumes only $0<\inf_{\theta\in\Theta}\int_{x\in\mathcal{X}} g_{\theta}(y'|\overline{\bf y},x)\mu(dx)$ and $\sup_{\theta \in \Theta}\int_{x\in\mathcal{X}} g_{\theta}(y'|\overline{\bf y},x) \mu(dx)<\infty$. When $\mathcal{X}$ is finite, Assumption \ref{assn_a3} becomes identical to Assumption A3 of \citet{francq98stat}, who prove the consistency of the MLE when $\mathcal{X}$ is finite. It appears that assuming a lower bound on $g_\theta$ similar to Assumption \ref{assn_a3} is necessary to derive the asymptotics of the MLE when $\inf_\theta \inf_{x,x'} q_\theta(x,x')=0$. When $p=1$, we could weaken Assumption \ref{assn_a3} to Assumption A1(b) in DMR, but we retain Assumption \ref{assn_a3} to simplify the exposition and proof.

DMR assume $p=1$ in Assumption \ref{assn_a1}(d), meaning that the transition density $q_\theta(x,x')$ of the state variable $X_k$ is uniformly bounded from below. DMR show that this lower bound on $q_\theta(x,x')$ translates into a deterministic lower bound on the conditional transition density of $X_k$ given the observations of $\{(\overline{\bf Y}_k, W_k)\}_{k=0}^n$. Owing to this deterministic lower bound, the chain $\{X_k\}$ given $\{(\overline{\bf Y}_k, W_k)\}_{k=0}^n$ is geometrically mixing, and, consequently, the derivatives of the log-densities are also geometrically mixing and follow the law of large numbers and central limit theorem.

When $p \geq 2$, this lower bound is no longer deterministic and depends on the $Y_k$'s. For example, suppose that $\mathcal{X} = \{-1,0,1\}$, which correspond to ``recession,'' ``normal,'' and ``expansion'' periods, respectively, $\mathbb{P}(X_k=1|X_{k-1}=-1)=0$, and $Y_k|(X_k,Y_{k-1}) \sim N(0.6 Y_{k-1} + X_k, 1)$. Then, observing a negative value of $Y_{k-1}$ implies that the likely value of $X_{k-1}$ is $-1$, which in turn implies that the event $\{X_k=1\}$ is unlikely. As $Y_{k-1}$ approaches negative infinity, $\mathbb{P}(X_k=1|Y_{k},Y_{k-1})$ approaches zero and no lower bound on the transition density of $X_{k-1}$ exists given $(Y_k,Y_{k-1})$.

We overcome the zero lower bound on $q_\theta(x,x')$ by noting that, in many econometric models, only extreme values of $Y_{k-1}$ provide a strong signal on the value of $X_{k-1}$. Because $Y_{k-1}$ takes such extreme values with a small probability, the transition probability of the chain $\{X_k\}$ given $\{(\overline{\bf Y}_k, W_k)\}_{k=0}^n$ is bounded from below by a stochastic lower bound whose value is close to zero only with a small probability. As a result, the chain $\{X_k\}$ given $\{(\overline{\bf Y}_k, W_k)\}_{k=0}^n$ is geometrically mixing with a probability close to one.

Following DMR, we analyze the conditional log-likelihood function given $\overline{\bf Y}_{0}$, ${\bf W}_{0}^n$, and $X_0 = x_0$ rather than the stationary log-likelihood function given $\overline{\bf Y}_{0}$ and ${\bf W}_{0}^n$ because, as explained in DMR (pages 2263--2264), the conditional initial density $p_\theta(X_0|\overline{\bf Y}_0^{k-1})$ cannot be easily computed in practice. The conditional density function of ${\bf Y}_{1}^n$ is
\begin{equation} \label{cond_density}
p_{\theta}({\bf Y}_1^n| \overline{\bf Y}_{0},{\bf W}_{0}^n,x_0) = \int \prod_{k=1}^n p_{\theta}(Y_k,x_k|\overline{\bf Y}_{k-1},x_{k-1},W_k) \mu^{\otimes n}(d {\bf x}_1^n),
\end{equation}
where $p_{\theta} (y_k,x_k| \overline{\bf{y}}_{k-1},x_{k-1},w_k) = q_{\theta}(x_{k-1},x_k)g_{\theta}(y_k|\overline{\bf{y}}_{k-1}, x_k, w_k)$. Assumptions \ref{assn_a2}(a)--(c) imply that for $k\geq 1$, $W_k$ is conditionally independent of ${\bf Z}_{0}^{k-1}$ given ${\bf W}_{0}^{k-1}$ because $p(W_k|{\bf Z}_0^{k-1},{\bf W}_0^{k-1}) = p({\bf W}_0^k,{\bf Z}_0^{k-1}) / p({\bf W}_0^{k-1},{\bf Z}_0^{k-1})$ and for $j=k, k-1$, $p({\bf W}_0^j,{\bf Z}_0^{k-1}) = p(Z_0,{\bf W}_0^{j}) \prod_{t=1}^{k-1} p(Z_t|Z_{t-1},W_t) = p({\bf W}_1^{j}|W_0)p(Z_0|W_0)\prod_{t=1}^{k-1} p(Z_t|Z_{t-1},W_t)$. Therefore, for $1 \leq k \leq n$, we have
\begin{align} 
p_\theta ({\bf Y}_1^k| \overline{\bf Y}_{0},{\bf W}_{0}^n,x_0) & = p_\theta ({\bf Y}_1^k| \overline{\bf Y}_{0},{\bf W}_{0}^k,x_0), \label{cond_w_1} \\
p_\theta ({\bf Y}_1^k| \overline{\bf Y}_{0},{\bf W}_{0}^n) & = p_\theta ({\bf Y}_1^k| \overline{\bf Y}_{0},{\bf W}_{0}^k). \label{cond_w_2}
\end{align}
In view of (\ref{cond_w_1}) and (\ref{cond_w_2}), we can write the conditional and stationary log-likelihood functions as
\begin{equation} \label{l_n_x0}
\begin{aligned}
l_n(\theta,x_0) & := \log p_\theta({\bf Y}_1^n| \overline{\bf Y}_{0},{\bf W}_{0}^n,x_0) =\sum_{k=1}^n \log p_\theta (Y_k| \overline{\bf Y}_{0}^{k-1},{\bf W}_{0}^{k},x_0),\\
l_n(\theta) & := \log p_\theta({\bf Y}_1^n| \overline{\bf Y}_{0},{\bf W}_{0}^n) =\sum_{k=1}^n \log p_\theta (Y_k| \overline{\bf Y}_{0}^{k-1},{\bf W}_{0}^k). 
\end{aligned}
\end{equation}
Many applications use the log-likelihood function in which the conditional density $p_\theta({\bf Y}_1^n| \overline{\bf Y}_{0},{\bf W}_{0}^n,x_0)$ is integrated with respect to $x_0$ over a probability measure $\xi$ on $\mathcal{B}(\mathcal{X})$, where $\xi$ can be fixed or treated as an additional parameter. We also analyze the resulting objective function: 
\begin{equation} \label{l_n_xi}
l_n(\theta,\xi) := \log \left(\int p_\theta({\bf Y}_1^n| \overline{\bf Y}_{0},{\bf W}_{0}^n,x_0)\xi(dx_0)\right).
\end{equation}

\section{Uniform forgetting of the conditional hidden Markov chain}

In this section, we establish a mixing rate of the conditional hidden Markov chain, which is the  chain $\{X_k\}_{k=-m}^{n}$ given $(\overline{\bf Y}_{-m}^n, {\bf W}_{-m}^n)$. The bounds on this mixing rate are instrumental in deriving the asymptotic properties of the MLE.  The following lemma bounds the distance between the distributions of $X_k$ given $(\overline{\bf Y}_{-m}^n, {\bf W}_{-m}^n)$ when starting from two different initial distributions $\mu_1(\cdot)$ and $\mu_2(\cdot)$ of $X_{-m}$. In other words, this lemma provides the rate at which the conditional hidden Markov chain forgets its past. This lemma generalizes  Corollary 1 of DMR, which shows that the conditional hidden Markov chain forgets its past at a deterministic exponential rate when $p=1$. As DMR note on page 2258, their deterministic rate holds only when $p=1$. 
\begin{lemma}\label{corollary_1}
Assume Assumptions \ref{assn_a1}--\ref{assn_a3}. Let $m, n \in \mathbb{Z}$ with $-m \leq n$ and $\theta \in \Theta$. Then, for all $-m \leq k \leq n$, all probability measures $\mu_1$ and $\mu_2$ on $\mathcal{B}(\mathcal{X})$, and all $(\overline{\bf y}_{-m}^n,{\bf w}_{-m}^n)$,
\begin{align*}
& \left\| \int_{\mathcal{X}} \mathbb{P}_\theta \left(X_k \in \cdot \middle| X_{-m}=x, \overline{\bf y}_{-m}^n, {\bf w}_{-m}^n \right) \mu_1(dx) - \int_{\mathcal{X}} \mathbb{P}_\theta \left(X_k \in \cdot \middle| X_{-m}=x, \overline{\bf y}_{-m}^n, {\bf w}_{-m}^n \right) \mu_2(dx) \right\|_{TV} \\
&\leq \prod_{i=1}^{\lfloor (k+m)/p \rfloor} \left( 1-\omega(\overline{\bf y}_{-m+pi-p}^{-m+pi-1}, {\bf w}_{-m+pi-p}^{-m+pi-1}) \right), 
\end{align*}
where $\omega(\overline{\bf y}_{k-p}^{k-1},{\bf w}_{k-p}^{k-1}) := \sigma_-/\sigma_+$ when $p=1$, and, when $p \geq 2$,\footnote{Strictly speaking, $w_{k-p}$ in $\omega(\overline{\bf y}_{k-p}^{k-1},{\bf w}_{k-p}^{k-1})$ is superfluous because $\omega(\overline{\bf y}_{k-p}^{k-1},{\bf w}_{k-p}^{k-1})$ does not depend on $w_{k-p}$. We retain $w_{k-p}$ for notational simplicity.}
\begin{equation} \label{omega_defn}
\omega(\overline{\bf y}_{k-p}^{k-1},{\bf w}_{k-p}^{k-1}) := \frac{\sigma_-}{\sigma_+}   \left( \frac{  \inf_\theta \inf_{{\bf x}_{k-p+1}^{k-1}} \prod_{i=k-p+1}^{k-1} g_\theta(y_i|\overline{\bf y}_{i-1},x_i,w_i)}{  \sup_\theta \sup_{{\bf x}_{k-p+1}^{k-1}} \prod_{i=k-p+1}^{k-1} g_\theta(y_i|\overline{\bf y}_{i-1},x_i,w_i)}\right)^2.
\end{equation}
\end{lemma}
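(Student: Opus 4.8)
The plan is to use the fact that, conditionally on the observations $(\overline{\bf y}_{-m}^n,{\bf w}_{-m}^n)$, the hidden sequence $\{X_k\}_{k=-m}^n$ is again a (time-inhomogeneous) Markov chain; this is immediate from the conditional-independence structure in Assumption~\ref{assn_a2}. Writing $F_j(x_{j-1},\cdot):=\mathbb{P}_\theta(X_j\in\cdot\mid X_{j-1}=x_{j-1},\overline{\bf y}_{-m}^n,{\bf w}_{-m}^n)$ for its one-step smoothing kernel and $K:=F_{-m+1}\cdots F_k$, we have $\mathbb{P}_\theta(X_k\in\cdot\mid X_{-m}=x,\overline{\bf y}_{-m}^n,{\bf w}_{-m}^n)=\delta_x K$, so the two measures inside the norm are $\mu_1 K$ and $\mu_2 K$. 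Total variation contracts along a Markov kernel $K$ at the rate of its Dobrushin coefficient $\delta(K):=\sup_{x,x'}\|K(x,\cdot)-K(x',\cdot)\|_{TV}$, which is submultiplicative under composition; since $\|\mu_1-\mu_2\|_{TV}\le1$, this gives $\|\mu_1K-\mu_2K\|_{TV}\le\delta(K)\le\prod_{i=1}^{L}\delta(G_i)$, where $L=\lfloor(k+m)/p\rfloor$, each block $G_i:=F_{-m+p(i-1)+1}\cdots F_{-m+pi}$ is a $p$-step smoothing kernel, and the leftover fewer-than-$p$ kernels contribute a factor $\le1$. It therefore suffices to show $\delta(G_i)\le1-\omega_i$, where $\omega_i$ is (\ref{omega_defn}) evaluated on block $i$; since the right endpoint of block $i$ is $-m+pi$, its observations are exactly $\overline{\bf y}_{-m+pi-p}^{-m+pi-1}$, matching the statement.

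The heart of the argument is a minorization of one block kernel. Fix a block with right endpoint $k$ and left endpoint $k-p$. The kernel $G_i(x,\cdot)$ equals the conditional law of $X_k$ given $X_{k-p}=x$ and the observations, which factorizes (Assumption~\ref{assn_a2}) as proportional, as a measure in $x_k$, to $g_\theta(y_k\mid\overline{\bf y}_{k-1},x_k,w_k)\,\beta(x_k)\,H(x,x_k)\,\mu(dx_k)$, where $\beta(x_k)$ is the backward likelihood of the observations after time $k$ (a function of $x_k$ alone) and
\[
H(x,x_k):=\int_{\mathcal{X}^{p-1}}\prod_{i=k-p+1}^{k}q_\theta(x_{i-1},x_i)\prod_{i=k-p+1}^{k-1}g_\theta(y_i\mid\overline{\bf y}_{i-1},x_i,w_i)\prod_{i=k-p+1}^{k-1}\mu(dx_i)
\]
collects the $p$ hidden transitions across the block together with the $p-1$ emission densities attached to the internal states $x_{k-p+1},\dots,x_{k-1}$. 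Because $g_\theta(y_k\mid\cdot)$ and $\beta$ depend on $x_k$ only, they can be absorbed into a single reference probability measure $\lambda(dx_k)\propto g_\theta(y_k\mid\overline{\bf y}_{k-1},x_k,w_k)\,\beta(x_k)\,\mu(dx_k)$ that does not depend on the starting point $x$.

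Assumptions~\ref{assn_a1}(d) and \ref{assn_a3} then deliver two-sided bounds on $H$ that are uniform in $(x,x_k)$. Replacing each internal emission by its supremum (infimum) over $(\theta,x_i)$ factors $\prod_{i=k-p+1}^{k-1}g_\theta$ out of the integral, and the remaining integral of the $p$ transition densities is exactly the $p$-step density $q_\theta^p(x,x_k)$, which lies in $[\sigma_-,\sigma_+]$ by Assumption~\ref{assn_a1}(d). Hence $H(x,x_k)$ is bounded below by $\sigma_-$ times the infimum of the internal emission product and above by $\sigma_+$ times its supremum. Comparing $G_i(x,\cdot)$ with $\lambda$ and inserting these bounds in the numerator and in the normalizing denominator yields the minorization $G_i(x,\cdot)\ge\rho_i\,\lambda(\cdot)$ for every $x$, with $\rho_i=(\sigma_-/\sigma_+)$ times the \emph{first} power of the emission ratio in (\ref{omega_defn}); therefore $\delta(G_i)\le1-\rho_i$. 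Since that ratio is at most one, $1-\rho_i\le1-\omega_i$, so the stated product with the squared ratio follows a fortiori. (When $p=1$ there are no internal emissions, the ratio is $1$, and $\rho_i=\omega_i=\sigma_-/\sigma_+$.)

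The principal difficulty—and the reason DMR's deterministic Corollary~1 does not extend directly—is that this minorization constant is \emph{random}: it depends on the realized $\overline{\bf y}$ through the ratio of internal emission densities and, as the recession/expansion illustration shows, can be arbitrarily close to zero on extreme realizations of $Y$. The proof must therefore retain $\omega_i$ as an explicit functional of the block observations rather than a fixed constant, verify that $\lambda$ is a genuine probability measure independent of $x$ (which requires $\beta$ and the emission integrals to be finite and positive, guaranteed by the uniform bounds in Assumption~\ref{assn_a3}), and check that the emission bounds hold uniformly over $\theta$ and $x$—exactly the strengthening of DMR's integrated condition that Assumption~\ref{assn_a3} supplies. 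The remaining bookkeeping, namely the sub-$p$ remainder block and the overlap in the lagged vectors $\overline{\bf Y}$, affects neither the constants nor the rate.
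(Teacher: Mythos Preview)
Your argument is correct and follows the same overall strategy as the paper: establish a minorization of the $p$-step kernel of the conditional chain, then convert this into a total-variation contraction. The paper packages these two steps as separate auxiliary results (Lemma~\ref{lemma_dmr_1} for the minorization and Lemma~\ref{lemma_coupling}, an inhomogeneous coupling lemma, for the contraction), while you invoke Dobrushin coefficients and their submultiplicativity directly; the two devices are equivalent here. The more substantive difference is in how the minorization constant is computed. The paper first bounds the \emph{normalized} density $p_\theta(x_k\mid x_{k-p},\overline{\bf y}_{k-p}^{k-1},{\bf w}_{k-p}^{k-1})$ from above and below; since the normalizing constant itself depends on the internal emissions through $\overline{\bf y}_{k-p}^{k-1}$, the emission ratio appears once in the lower bound and once (inverted) in the upper bound, producing the square in $\omega$. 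Your decomposition via the unnormalized $H(x,x_k)$ lets that normalization cancel between numerator and denominator of $G_i(x,\cdot)$, yielding the sharper first-power constant $\rho_i=(\sigma_-/\sigma_+)\cdot(\inf\prod g/\sup\prod g)\ge\omega_i$, from which the stated bound follows a fortiori as you note. Both routes reach the lemma; yours is slightly more economical and would in fact support a marginally stronger statement.
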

The convergence rate of the conditional hidden Markov chain depends on the minorization coefficient $\omega(\overline{\bf Y}_{k-p}^{k-1}, {\bf W}_{k-p}^{k-1})$. If this coefficient is bounded away from 0, the chain forgets its past exponentially fast. When $p\geq 2$, this coefficient is not  necessarily  bounded away from 0 because $\inf_{\overline{\bf y}_{k-p}^{k-1}, {\bf w}_{k-p}^{k-1}}\omega(\overline{\bf y}_{k-p}^{k-1}, {\bf w}_{k-p}^{k-1})$  can be possibly 0. However, $\omega(\overline{\bf Y}_{k-p}^{k-1}, {\bf W}_{k-p}^{k-1})$ becomes close to zero only when ${\bf Y}_{k-p+1}^{k-1}$ takes an unlikely value because the denominator of $\omega(\overline{\bf Y}_{k-p}^{k-1}, {\bf W}_{k-p}^{k-1})$ is finite and the numerator of $\omega(\overline{\bf Y}_{k-p}^{k-1}, {\bf W}_{k-p}^{k-1})$ is a product of the conditional density $g_\theta(y|\overline{\bf y},x,w)$. As a result, $\omega(\overline{\bf Y}_{k-p}^{k-1}, {\bf W}_{k-p}^{k-1})$ is bounded away from 0 with a probability close to 1. In the following sections, we use this fact to establish the consistency and asymptotic normality of the MLE.

\section{Consistency of the MLE}

Define the conditional MLE of $\theta^*$ given $\overline{\bf Y}_{0}$, ${\bf W}_{0}^n$, and $X_0 = x_0$ as
\[
\hat \theta_{x_0}:= \argmax_{\theta \in \Theta} l_n(\theta,x_0),
\]
with $l_n(\theta,x_0)$ defined in (\ref{l_n_x0}). In this section, we prove the consistency of the conditional MLE. We introduce additional assumptions required for proving consistency.
\begin{assumption}\label{assn_gbound}
(a)  $\mathbb{E}_{\theta^*} |\log b_+(\overline{\bf Y}_{0}^1,W_1)|< \infty$, where \\$b_+(\overline{\bf Y}_{k-1}^k,W_k):= \sup_{\theta \in \Theta}\sup_{x_k \in \mathcal{X}} g_{\theta}(Y_k| \overline{\bf Y}_{k-1},x_{k},W_k)$. (b) $\mathbb{E}_{\theta^*} |\log b_-(\overline{\bf Y}_{0}^1,W_1)|< \infty$, where \\ $b_-(\overline{\bf Y}_{k-1}^k,W_k):= \inf_{\theta \in \Theta}\inf_{x_k \in \mathcal{X}} g_{\theta}(Y_k| \overline{\bf Y}_{k-1},x_{k},W_k)$. 
\end{assumption}
\begin{assumption}\label{assn_gbound2}
There exist constants $\alpha>0$, $C_1,C_2 \in (0,\infty)$, and $\beta>1$ such that, for any $r > 0$, 
\[
\mathbb{P}_{\theta^*}\left( b_+(\overline{\bf Y}_{0}^1,W_1)/  b_-(\overline{\bf Y}_{0}^1,W_1) \geq  C_1 e^{ \alpha r} \right) \leq C_2 r^{-\beta}.
\]
\end{assumption}
Assumption \ref{assn_gbound}(a)  relaxes Assumption (A3) of DMR, who assume that\\ $\sup_{\theta \in \Theta}\sup_{y_1,\overline{\bf y}_0,x,w} g_{\theta}(y_1|\overline{\bf y}_0,x,w)<\infty$ and hence the density is uniformly bounded. Assumption \ref{assn_gbound}(b) is stronger than Assumption (A3) of DMR, who assume $\mathbb{E}_{\theta^*} |\log (\inf_{\theta \in \Theta} \int g_{\theta}(Y_1| \overline{\bf Y}_{0},x) \mu(dx)|< \infty$. Assumption \ref{assn_gbound} implies that $\mathbb{E}_{\theta^*} \sup_{\theta \in \Theta}\sup_{x \in \mathcal{X}}|\log ( g_{\theta}(Y_1| \overline{\bf Y}_{0},x,W_1))|< \infty$, which is similar to the moment condition used in the standard maximum likelihood estimation, but the infimum is taken over $x$ in addition to $\theta$. Assumption \ref{assn_gbound2} restricts the probability that\\ $\sup_{\theta \in \Theta}\sup_{x_k \in \mathcal{X}}g_\theta(Y_k|\overline{\bf Y}_{k-1},x_k,W_k) /\inf_{\theta \in \Theta}\inf_{x_k \in \mathcal{X}}g_\theta(Y_k|\overline{\bf Y}_{k-1},x_k,W_k)$ takes an extremely  large  value. Assumption \ref{assn_gbound2} is not restrictive because the right hand side of the inequality inside $\mathbb{P}_{\theta^*}(\cdot)$ is exponential in $r$ and the bound $C_2r^{-\beta}$ is a polynomial in $r$. An easily verifiable sufficient condition for Assumption \ref{assn_gbound2} is $\mathbb{E}_{\theta^*} |\log (b_+(\overline{\bf Y}_{0}^1,W_1)/  b_-(\overline{\bf Y}_{0}^1,W_1) )|^{1+\delta}< \infty$ for some $\delta>0$. This is because $\mathbb{P}_{\theta^*} ( b_+(\overline{\bf Y}_{0}^1,W_1) / b_-(\overline{\bf Y}_{0}^1,W_1)  \geq e^{ \alpha r}) = \mathbb{P}_{\theta^*} (  \log( b_+(\overline{\bf Y}_{0}^1,W_1)/ b_-(\overline{\bf Y}_{0}^1,W_1) )  \geq \alpha r)$\\ $ \leq ( \mathbb{E}_{\theta^*} |\log ( b_+(\overline{\bf Y}_{0}^1,W_1)/ b_-(\overline{\bf Y}_{0}^1,W_1) )|^{1+\delta} )/(\alpha r)^{1+\delta} \leq C_2 r^{-(1+\delta)}$, where the first inequality follows from  Markov's inequality. Examples \ref{example_1}--\ref{example_4} satisfy Assumptions \ref{assn_gbound} and \ref{assn_gbound2}.

In the following lemma, we show that the difference between the conditional log-likelihood function $l_n(\theta,x_0)$ and the stationary log-likelihood function $l_n(\theta)$ is $o(n)$ $\mathbb{P}_{\theta^*}$-a.s. 
\begin{lemma}\label{lemma_lnx0}
Assume Assumptions \ref{assn_a1}--\ref{assn_gbound2}. Then, 
\[
n^{-1} \sup_{x_0 \in \mathcal{X}}\sup_{\theta \in \Theta}|l_n(\theta,x_0) - l_n(\theta)| \to 0 \quad \mathbb{P}_{\theta^*}\text{-}a.s.
\]
\end{lemma}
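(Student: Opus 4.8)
The plan is to bound the two log-likelihoods term by term using the forgetting rate of Lemma \ref{corollary_1}, and then to show that the resulting bound, averaged over $k$, vanishes almost surely by combining the ergodic theorem with the moment condition in Assumption \ref{assn_gbound}. Writing $l_n(\theta,x_0)-l_n(\theta)=\sum_{k=1}^n\{\log p_\theta(Y_k|\overline{\bf Y}_0^{k-1},{\bf W}_0^k,x_0)-\log p_\theta(Y_k|\overline{\bf Y}_0^{k-1},{\bf W}_0^k)\}$, I would first represent each conditional density via the prediction step of the filter,
\[
p_\theta(Y_k|\overline{\bf Y}_0^{k-1},{\bf W}_0^k,x_0)=\int_{\mathcal{X}}\phi_{\theta,k}(x_{k-1})\,\mathbb{P}_\theta\bigl(X_{k-1}\in dx_{k-1}\big|\overline{\bf Y}_0^{k-1},{\bf W}_0^{k-1},x_0\bigr),
\]
where $\phi_{\theta,k}(x_{k-1}):=\int_{\mathcal{X}}g_\theta(Y_k|\overline{\bf Y}_{k-1},x_k,W_k)q_\theta(x_{k-1},x_k)\mu(dx_k)$, together with the identical representation for $l_n(\theta)$ in which the filter is started from the stationary conditional law $\mathbb{P}_\theta(X_0\in\cdot|\overline{\bf Y}_0,{\bf W}_0)$ rather than $\delta_{x_0}$. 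Because $\int q_\theta(x_{k-1},x_k)\mu(dx_k)=1$, the integrand obeys $b_-(\overline{\bf Y}_{k-1}^k,W_k)\le\phi_{\theta,k}\le b_+(\overline{\bf Y}_{k-1}^k,W_k)$, so both densities lie in $[b_-,b_+]$ and $\phi_{\theta,k}$ has oscillation at most $b_+-b_-$.

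Next I would combine $|\log a-\log b|\le|a-b|/(a\wedge b)$ with the oscillation-times-total-variation inequality (valid since both densities exceed $b_-$ and $\phi_{\theta,k}$ oscillates by at most $b_+-b_-$) and the uniform-in-$(\theta,x_0)$ forgetting bound of Lemma \ref{corollary_1} applied with $-m=0$ and target index $k-1$. This yields, uniformly in $x_0$ and $\theta$,
\[
\bigl|\log p_\theta(Y_k|\cdots,x_0)-\log p_\theta(Y_k|\cdots)\bigr|\le \frac{b_+(\overline{\bf Y}_{k-1}^k,W_k)}{b_-(\overline{\bf Y}_{k-1}^k,W_k)}\prod_{i=1}^{\lfloor(k-1)/p\rfloor}\bigl(1-\omega(\overline{\bf Y}_{pi-p}^{pi-1},{\bf W}_{pi-p}^{pi-1})\bigr)=:R_k .
\]
It then suffices to prove $R_k\to0$ $\mathbb{P}_{\theta^*}$-a.s., since this gives $n^{-1}\sum_{k=1}^nR_k\to0$ by Cesàro averaging. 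Write $R_k=A_kB_k$ with $A_k:=b_+/b_-$ at time $k$ and $B_k:=\prod_{i=1}^{M_k}(1-\omega_i)$, $M_k:=\lfloor(k-1)/p\rfloor$. The sequence $\log A_k$ is stationary with $\mathbb{E}_{\theta^*}|\log A_1|<\infty$ by Assumption \ref{assn_gbound}(a)(b), so a Borel--Cantelli argument gives $k^{-1}\log A_k\to0$ a.s., i.e.\ $A_k=e^{o(k)}$. Setting $V_i:=-\log(1-\omega_i)\ge\omega_i\ge0$, so that $B_k=\exp(-\sum_{i=1}^{M_k}V_i)$, I would apply the ergodic theorem to the $p$-step shift to obtain $M_k^{-1}\sum_{i=1}^{M_k}V_i\to\mathbb{E}_{\theta^*}[V_1\mid\mathcal{I}_p]=:\widetilde L$ a.s., where $\mathcal{I}_p$ is the invariant $\sigma$-field of that shift. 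Under Assumption \ref{assn_a3} one has $\omega>0$ a.s., hence $V_1>0$ a.s., so $\widetilde L>0$ a.s.; consequently $B_k\le e^{-(\widetilde L/2)M_k}$ for large $k$, and since $M_k\sim k/p$ the factor $B_k$ decays geometrically at a strictly positive random rate. Combining, $R_k\le\exp\bigl(o(k)-(\widetilde L/2p)k(1+o(1))\bigr)\to0$ a.s.

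The main obstacle is the control of $B_k$. Because the transition density can vanish when $p\ge2$, the minorization coefficient $\omega$ is not bounded away from $0$, so there is no deterministic geometric rate (precisely the difficulty the paper is built to overcome), and the product must be shown to decay geometrically only almost surely. The delicate point is that the product samples $\omega$ on non-overlapping blocks of length $p$, so the relevant average runs along the $p$-step shift, which need not be ergodic even though the one-step shift is; the limit $\widetilde L$ is therefore an a priori random conditional expectation rather than a constant. The argument hinges on $\widetilde L>0$ a.s., which follows from $V_1>0$ a.s.\ (Assumption \ref{assn_a3}) and the fact that the conditional expectation of a strictly positive random variable is strictly positive. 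The subexponential growth $A_k=e^{o(k)}$ furnished by the first-moment bound of Assumption \ref{assn_gbound} is exactly what the geometric decay of $B_k$ must dominate; the polynomial tail bound of Assumption \ref{assn_gbound2} appears not to be needed for this lemma and enters the consistency argument elsewhere.
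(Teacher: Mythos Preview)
Your argument is correct and in fact cleaner than the paper's. The key difference is the level at which you invoke the filter representation: you condition on $X_{k-1}$ and use $\phi_{\theta,k}(x_{k-1})=\int g_\theta(Y_k|\overline{\bf Y}_{k-1},x_k,W_k)q_\theta(x_{k-1},x_k)\mu(dx_k)\in[b_-,b_+]$ (since $\int q_\theta\,d\mu=1$), whereas the paper conditions on $X_{k-p}$ and works with $p_\theta(x_k|x_{k-p},\overline{\bf Y}_{k-p}^{k-1},{\bf W}_{k-p}^{k-1})$, whose oscillation is controlled only up to the factor $1/\omega(\overline{\bf V}_{k-p}^{k-1})$. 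That extra factor is precisely what forces the paper to invoke Assumption~\ref{assn_gbound2} via a Borel--Cantelli step; your route avoids it, confirming your remark that Assumption~\ref{assn_gbound2} is not needed for this particular lemma. On the product side, the paper uses a threshold argument---$I_i:=\mathbb{I}\{1-\omega_i\ge\rho\}$ together with $(1-\omega_i)\le\rho^{1-I_i}$ and the ergodic theorem for $\sum I_i$---to extract a \emph{deterministic} geometric rate $\rho^{\lfloor k/3p\rfloor}$ times a sequence that is eventually bounded; you instead apply the ergodic theorem to $V_i=-\log(1-\omega_i)$ along the $p$-step shift and obtain a \emph{random} exponential rate $\widetilde L>0$ a.s., which suffices here via Ces\`aro. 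Your handling of the possible non-ergodicity of the $p$-step shift (limit as a conditional expectation, strictly positive because $V_1>0$ a.s.) is the right way to close that point. The trade-off is that the paper's explicit rate $\rho^{\lfloor(k+m)/3p\rfloor}b_{k,m}$ with $\mathbb{P}_{\theta^*}(b_{k,m}\ge M\text{ i.o.})=0$ is reused verbatim in the proof of Lemma~\ref{lemma3_dmr} and downstream; your softer conclusion $R_k\to0$ a.s.\ would require a small reformulation if carried forward, but for Lemma~\ref{lemma_lnx0} itself it is entirely adequate.
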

When $p=1$, Lemma 2 of DMR shows that $\sup_{\theta \in \Theta}|l_n(\theta,x_0) - l_n(\theta)|$ is bounded by a deterministic constant. When $p \geq 2$, Lemma 2 of DMR is no longer applicable because $|l_n(\theta,x_0) - l_n(\theta)|$ depends on the products of $1-\omega(\overline{\bf Y}_{pi-p}^{pi-1}, {\bf W}_{pi-p}^{pi-1})$'s for $i=1,\ldots,\lfloor n/p \rfloor$. A key observation is that $\{\omega(\overline{\bf Y}_{pi-p}^{pi-1}, {\bf W}_{pi-p}^{pi-1})\}_{i \geq 1}$ is stationary and ergodic and that $\epsilon:=\mathbb{P}_{\theta^*}(\omega(\overline{\bf Y}_{pi-p}^{pi-1}, {\bf W}_{pi-p}^{pi-1}) \leq \delta)$ is small when $\delta>0$ is sufficiently small. Because the strong law of large numbers implies that $(\lfloor n/p \rfloor)^{-1} \sum_{i=1}^{\lfloor n/p \rfloor} \mathbb{I}\{ \omega(\overline{\bf Y}_{pi-p}^{pi-1}, {\bf W}_{pi-p}^{pi-1}) > \delta\}$ converges to $1-\epsilon$ $\mathbb{P}_{\theta^*}$-a.s., $1-\omega(\overline{\bf Y}_{pi-p}^{pi-1}, {\bf W}_{pi-p}^{pi-1})  \leq  1-\delta$ holds for a large fraction of the $\omega(\overline{\bf Y}_{pi-p}^{pi-1}, {\bf W}_{pi-p}^{pi-1})$'s. Consequently, we can establish a $\mathbb{P}_{\theta^*}$-a.s.\ bound on $n^{-1}|l_n(\theta,x_0) - l_n(\theta)|$.

We proceed to show that, for all $\theta \in \Theta$, $p_\theta(Y_k|\overline{\bf Y}^{k-1}_{-m},{\bf W}_{-m}^k)$ converges to $p_\theta(Y_k|\overline{\bf Y}^{k-1}_{-\infty},{\bf W}_{-\infty}^k)$ $\mathbb{P}_{\theta^*}$-a.s.\ as $m \to \infty$ and that we can approximate $n^{-1}l_n(\theta)$ by $n^{-1}\sum_{k=1}^n \log p_\theta(Y_k|\overline{\bf Y}^{k-1}_{-\infty},{\bf W}_{-\infty}^k)$, which is the sample average of the stationary ergodic random variables. For $x \in \mathcal{X}$ and $m \geq 0$, define
\begin{align*}
\Delta_{k,m,x}(\theta) &:= \log p_\theta(Y_k|\overline{\bf Y}^{k-1}_{-m},{\bf W}_{-m}^k,X_{-m}=x), \\
\Delta_{k,m}(\theta) &:= \log p_\theta(Y_k|\overline{\bf Y}^{k-1}_{-m},{\bf W}_{-m}^k) \\
& = \log \int p_\theta(Y_k|\overline{\bf Y}^{k-1}_{-m},{\bf W}_{-m}^k,X_{-m}=x_{-m}) \mathbb{P}_\theta(dx_{-m}|\overline{\bf Y}^{k-1}_{-m},{\bf W}_{-m}^k),
\end{align*}
so that $l_n(\theta)=\sum_{k=1}^n \Delta_{k,0}(\theta)$. The following proposition corresponds to Lemma 3 of DMR. This proposition shows that, for any $k \geq 0$, the sequences $\{\Delta_{k,m}(\theta)\}_{m \geq 0}$ and $\{\Delta_{k,m,x}(\theta)\}_{m \geq 0}$ are Cauchy   uniformly in $\theta \in \Theta$.
\begin{lemma} \label{lemma3_dmr}
Assume Assumptions \ref{assn_a1}--\ref{assn_gbound2}. Then, there exist a constant $\rho \in (0,1)$ and random sequences $\{A_{k,m}\}_{k\geq 1, m \geq 0}$ and $\{B_k\}_{k\geq 1}$ such that, for all $1 \leq k \leq n$ and $m' \geq m \geq 0$, 
\begin{align*}
(a) & \quad \sup_{x,x' \in \mathcal{X}} \sup_{\theta \in \Theta} \left|\Delta_{k,m,x}(\theta)-\Delta_{k,m',x'}(\theta)\right| \leq A_{k,m} \rho^{\lfloor (k+m)/3p \rfloor}, \\
(b) & \quad \sup_{x \in \mathcal{X}} \sup_{\theta \in \Theta} \left|\Delta_{k,m,x}(\theta)-\Delta_{k,m}(\theta)\right| \leq A_{k,m} \rho^{\lfloor (k+m)/3p \rfloor}, \\
(c) & \quad \sup_{m \geq 0} \sup_{x \in \mathcal{X}} \sup_{\theta \in \Theta} \left|\Delta_{k,m,x}(\theta) \right|+ \sup_{m \geq 0} \sup_{\theta \in \Theta}\left|\Delta_{k,m}(\theta) \right| \leq B_k, 
\end{align*}
where $\mathbb{P}_{\theta^*}\left(A_{k,m} \geq M\ \text{i.o.}\right)=0$ for a constant $M <\infty$ and $B_k \in L^1(\mathbb{P}_{\theta^*})$.
\end{lemma}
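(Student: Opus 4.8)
The plan is to represent each $\Delta_{k,m,x}(\theta)$ as the logarithm of an integral of the regime-specific density against a predictive law of $X_k$, reduce differences of the $\Delta$'s to the total-variation forgetting rate of Lemma~\ref{corollary_1}, and finally convert the resulting random product into the clean geometric rate $\rho^{\lfloor(k+m)/3p\rfloor}$ via Birkhoff's ergodic theorem. For the representation, Assumption~\ref{assn_a2}(b) gives
\[
\Delta_{k,m,x}(\theta)=\log\int_{\mathcal X} g_\theta(Y_k\mid\overline{\bf Y}_{k-1},x_k,W_k)\,\pi^\theta_{k,m,x}(dx_k),\qquad \pi^\theta_{k,m,x}(\cdot):=\mathbb{P}_\theta\!\left(X_k\in\cdot\mid \overline{\bf Y}_{-m}^{k-1},{\bf W}_{-m}^{k},X_{-m}=x\right),
\]
and $\Delta_{k,m',x'}$, $\Delta_{k,m}$ admit the same representation with $\pi^\theta_{k,m,x}$ replaced by the predictive law attached to their own initialisation. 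Using the Markov structure in Assumption~\ref{assn_a2}(a)(b), the predictive law based on data from $-m'$ is a mixture over $X_{-m}$ of the predictive law based on data from $-m$; hence all three are of the form $\int \mathbb{P}_\theta(X_k\in\cdot\mid X_{-m}=\xi,\overline{\bf Y}_{-m}^{k-1},{\bf W}_{-m}^{k})\,\mu_j(d\xi)$ for probability measures $\mu_1,\mu_2$ on $\mathcal X$, so that Lemma~\ref{corollary_1} applies with $X_{-m}$ as the common anchor.

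Next, Assumption~\ref{assn_a3} gives $b_-(\overline{\bf Y}_{k-1}^k,W_k)\le g_\theta\le b_+(\overline{\bf Y}_{k-1}^k,W_k)$, so the elementary bounds $|\log a-\log b|\le|a-b|/(a\wedge b)$ and $|\int g\,d\mu_1-\int g\,d\mu_2|\le(\sup g-\inf g)\|\mu_1-\mu_2\|_{TV}$ yield, for every $\theta,x,x'$,
\[
\bigl|\Delta_{k,m,x}(\theta)-\Delta_{k,m',x'}(\theta)\bigr|\le R_k\,\bigl\|\pi^\theta_{k,m,x}-\pi^\theta_{k,m',x'}\bigr\|_{TV},\qquad R_k:=\frac{b_+(\overline{\bf Y}_{k-1}^k,W_k)}{b_-(\overline{\bf Y}_{k-1}^k,W_k)},
\]
and the analogue for (b). Since $R_k$ is free of $(\theta,x,x')$ and Lemma~\ref{corollary_1} bounds the total-variation factor by $\prod_{i=1}^{\lfloor(k+m)/p\rfloor}(1-\omega_i)$ with $\omega_i:=\omega(\overline{\bf Y}_{-m+pi-p}^{-m+pi-1},{\bf W}_{-m+pi-p}^{-m+pi-1})$, taking suprema produces the bound $R_k\prod_{i=1}^{\lfloor(k+m)/p\rfloor}(1-\omega_i)$ for the left sides of (a) and (b). I therefore set $A_{k,m}:=R_k\prod_{i=1}^{\lfloor(k+m)/p\rfloor}(1-\omega_i)\big/\rho^{\lfloor(k+m)/3p\rfloor}$, so that (a) and (b) hold by construction once $\rho$ is fixed.

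The crux is choosing $\rho\in(0,1)$ and proving $\mathbb{P}_{\theta^*}(A_{k,m}\ge M\ \text{i.o.})=0$. By Assumptions~\ref{assn_a1}(d) and \ref{assn_a3} one has $0<\omega_i\le\sigma_-/\sigma_+\le1$, so $\log(1-\omega_i)\le0$ and, since $\omega_1>0$, the constant $-\mathbb{E}_{\theta^*}\log(1-\omega_1)=:\gamma$ lies in $(0,\infty]$. As $\{\omega_i\}$ is stationary and ergodic (Assumption~\ref{assn_a2}(d)), Birkhoff's theorem gives $N^{-1}\sum_{i=1}^{N}\log(1-\omega_i)\to-\gamma$ with $N:=\lfloor(k+m)/p\rfloor$, while $\mathbb{E}_{\theta^*}|\log R_k|<\infty$ from Assumption~\ref{assn_gbound} forces $(\log R_k)/(k+m)\to0$ almost surely (Borel--Cantelli for the stationary sequence $\{\log R_k\}$). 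Picking any $\rho$ with $0<\log(1/\rho)<3\gamma$ (e.g.\ $\rho=e^{-3\gamma/2}$ when $\gamma<\infty$, and any $\rho\in(0,1)$ when $\gamma=\infty$) then yields $N^{-1}\log A_{k,m}\to-\gamma+\tfrac13\log(1/\rho)<0$, so $A_{k,m}\to0$ almost surely as $k+m\to\infty$, which gives the i.o.\ statement for every $M>0$. The delicate point, and the main obstacle, is that the window $[-m,k)$ has two drifting endpoints, so the quantity $N^{-1}\sum\log(1-\omega_i)$ is not a single forward Cesàro average; I would split the block sum at the time origin and invoke the forward and backward ergodic theorems from a fixed reference, controlling both boundary contributions by $o(N)$ uniformly along all sequences with $k+m\to\infty$ (the factor $\tfrac13$ provides exactly the slack needed to dominate these errors and the subgeometric growth of $\log R_k$).

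Finally, for part (c), every predictive law satisfies $b_-(\overline{\bf Y}_{k-1}^k,W_k)\le\int g_\theta\,d\pi^\theta\le b_+(\overline{\bf Y}_{k-1}^k,W_k)$, so $\sup_{m,x,\theta}|\Delta_{k,m,x}(\theta)|$ and $\sup_{m,\theta}|\Delta_{k,m}(\theta)|$ are each at most $|\log b_+(\overline{\bf Y}_{k-1}^k,W_k)|+|\log b_-(\overline{\bf Y}_{k-1}^k,W_k)|$. Hence $B_k:=2\bigl(|\log b_+(\overline{\bf Y}_{k-1}^k,W_k)|+|\log b_-(\overline{\bf Y}_{k-1}^k,W_k)|\bigr)$ dominates the left side of (c) and lies in $L^1(\mathbb{P}_{\theta^*})$ by Assumption~\ref{assn_gbound} and stationarity.
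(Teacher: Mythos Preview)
Your proof is correct and takes a genuinely different route from the paper's. The paper conditions the predictive law on $X_{k-p}$ (rather than passing through the one-step predictor at $X_{k-1}$ as you do), which produces the multiplicative factor $1/\omega(\overline{\bf V}_{k-p}^{k-1})$ in place of your $R_k=b_+/b_-$; it then converts the random product $\prod_i(1-\omega_i)$ into a geometric rate by a threshold trick---fixing $\epsilon$ small, choosing $\rho$ so that $\mathbb{P}_{\theta^*}(1-\omega_i\ge\rho)\le\epsilon$, bounding $1-\omega_i\le\rho^{1-I_i}$ with $I_i=\mathbb{I}\{1-\omega_i\ge\rho\}$, and invoking the strong law on $\sum_i I_i$---and separately controls the residual $1/\omega(\overline{\bf V}_{k-p}^{k-1})$ via Assumption~\ref{assn_gbound2} and Borel--Cantelli. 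Your direct application of Birkhoff's theorem to $\log(1-\omega_i)$ is more elementary and, notably, never invokes Assumption~\ref{assn_gbound2}: only Assumption~\ref{assn_gbound} enters, through the first-moment bound on $\log R_k$ (which gives $(\log R_k)/k\to 0$ a.s.\ by stationarity and the first Borel--Cantelli lemma). Both arguments face the same two-sided ergodic issue you flag---the block windows drift at both endpoints and the offset $-m\bmod p$ varies with $m$---and in both cases the ``i.o.''\ conclusion needs the splitting-at-the-origin argument you sketch, applied to each of the $p$ possible block offsets separately; the paper glosses over this just as you do, and the fix is routine. Part~(c) is handled identically in both proofs.
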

Lemma \ref{lemma3_dmr}(a) implies that $\{\Delta_{k,m,x}(\theta)\}_{m \geq 0}$ is a uniform Cauchy sequence in $\theta \in \Theta$ with probability one and that $\lim_{m\to\infty} \Delta_{k,m,x}(\theta)$ does not depend on $x$. Let $\Delta_{k,\infty}(\theta)$ denote this limit. Because $\{\Delta_{k,m,x}(\theta)\}_{m \geq 0}$ is uniformly bounded in $L^1(\mathbb{P}_{\theta^*})$ from Lemma \ref{lemma3_dmr}(c), $\{\Delta_{k,m,x}(\theta)\}_{m \geq 0}$ converges to $\Delta_{k,\infty}(\theta)$ in $L^1(\mathbb{P}_{\theta^*})$ and $\Delta_{k,\infty}(\theta) \in L^1(\mathbb{P}_{\theta^*})$ by the dominated convergence theorem. Define $l(\theta):= \mathbb{E}_{\theta^*}[\Delta_{0,\infty}(\theta)]$. Lemma \ref{lemma3_dmr} also implies that $n^{-1}l_n(\theta)$ converge to $n^{-1}\sum_{k=1}^n\Delta_{k,\infty}(\theta)$, which converges to $l(\theta)$ by the ergodic theorem. Therefore, the consistency of $\hat \theta_{x_0}$ is proven if this convergence of $n^{-1}l_n(\theta)-l(\theta)$ is strengthened to uniform convergence in $\theta \in \Theta$ and the additional regularity conditions are confirmed.

We introduce additional assumptions on the continuity of $q_\theta$ and $g_\theta$ and identification of $\theta^*$. 
\begin{assumption} \label{assn_consis}
(a) For all $(\overline{\bf y},y',w) \in \mathcal{Y}^s \times \mathcal{Y} \times \mathcal{W}$ and uniformly in $x,x' \in \mathcal{X}$, $q_{\theta}(x,x')$ and $g_\theta(y'|\overline{\bf y}, x, w)$ are continuous in $\theta$. (b) $\mathbb{P}_{\theta^*}[p_{\theta^*}(Y_1 |\overline{\bf{Y}}_{-m}^0,{\bf W}_{-m}^1) \neq p_{\theta}(Y_1 |\overline{\bf{Y}}_{-m}^0,{\bf W}_{-m}^1) ]>0$ for all $m \geq 0$ and all $\theta \in \Theta$ such that $\theta \neq \theta^*$.
\end{assumption}
Assumption \ref{assn_consis}(b) is a high-level assumption because it is imposed on $p_{\theta}(Y_1 |\overline{\bf{Y}}_{-m}^0,{\bf W}_{-m}^1)$. When the covariate $W_k$ is absent, DMR prove consistency under a lower-level assumption (their (A5$'$)), which is stated in terms of $p_{\theta}({\bf Y}_1^n |\overline{\bf{Y}}_0)$. We use Assumption \ref{assn_consis}(b) for brevity.

The following proposition shows the strong consistency of the (conditional) MLE. 
\begin{proposition} \label{prop_consistency}
Assume Assumptions \ref{assn_a1}--\ref{assn_consis}. Then, $\sup_{x_0 \in \mathcal{X}} |\hat \theta_{x_0} - \theta^*| \to 0$ $\mathbb{P}_{\theta^*}$-a.s.
\end{proposition}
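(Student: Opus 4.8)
The plan is to follow the classical Wald-type argument for extremum estimators: establish a uniform (in $\theta$ and in the initial point $x_0$) strong law for the normalized log-likelihood with a nonrandom limit $l(\theta)$, verify that $l$ is continuous and uniquely maximized at $\theta^*$, and then conclude by the standard argmax argument, reading off the uniformity in $x_0$ directly from the bound.

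First I would upgrade the pointwise convergence already sketched after Lemma \ref{lemma3_dmr} to uniform convergence over $\Theta$. Combining Lemma \ref{lemma3_dmr}(a)(b) (with $m=0$ and $m'\to\infty$) gives $\sup_{\theta}|\Delta_{k,0}(\theta)-\Delta_{k,\infty}(\theta)| \le 2 A_{k,0}\rho^{\lfloor k/3p\rfloor}$, so that $n^{-1}\sup_\theta|l_n(\theta) - \sum_{k=1}^n\Delta_{k,\infty}(\theta)| \le 2 n^{-1}\sum_{k=1}^n A_{k,0}\rho^{\lfloor k/3p\rfloor}$, which tends to $0$ $\pstar$-a.s.\ because $A_{k,0}<M$ eventually (the i.o.\ statement of Lemma \ref{lemma3_dmr}) and $\rho^{\lfloor k/3p\rfloor}$ is summable. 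It then remains to show $\sup_\theta|n^{-1}\sum_{k=1}^n \Delta_{k,\infty}(\theta) - l(\theta)|\to 0$ a.s. Here $\{\Delta_{k,\infty}(\theta)\}_k$ is stationary ergodic, the map $\theta\mapsto\Delta_{k,\infty}(\theta)$ is continuous (a uniform limit, by Lemma \ref{lemma3_dmr}(a), of the continuous maps $\theta\mapsto\Delta_{k,m,x}(\theta)$, whose continuity follows from Assumption \ref{assn_consis}(a) and the integral representation), and $\sup_\theta|\Delta_{k,\infty}(\theta)|\le B_k\in L^1(\pstar)$ by Lemma \ref{lemma3_dmr}(c). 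A uniform ergodic theorem then yields the claim together with the continuity of $l$. Combined with Lemma \ref{lemma_lnx0} this delivers $\sup_{x_0}\sup_\theta|n^{-1}l_n(\theta,x_0)-l(\theta)|\to 0$ $\pstar$-a.s.

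Next, identification: $l(\theta)\le l(\theta^*)$ with equality only at $\theta=\theta^*$. For each $m$, Jensen's (information) inequality gives $\mathbb{E}_{\theta^*}[\log p_\theta(Y_1|\overline{\bf Y}_{-m}^0,{\bf W}_{-m}^1)]\le \mathbb{E}_{\theta^*}[\log p_{\theta^*}(Y_1|\overline{\bf Y}_{-m}^0,{\bf W}_{-m}^1)]$; letting $m\to\infty$ (using the $L^1$ convergence of the $\Delta$'s) yields $l(\theta)\le l(\theta^*)$. The delicate point is strictness. Equality forces $\mathbb{E}_{\theta^*}[D(p_{\theta^*}(\cdot|\mathcal{F}_\infty)\|p_\theta(\cdot|\mathcal{F}_\infty))]=0$ with $\mathcal{F}_\infty=\sigma(\overline{\bf Y}_{-\infty}^0,{\bf W}_{-\infty}^1)$, i.e.\ the one-step predictive densities given the infinite past coincide $\pstar$-a.s. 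By stationarity this holds for every time shift, so that running the $\theta$-model's one-step predictors from the $\theta^*$-stationary past reproduces $\pstar$; since Assumption \ref{assn_a1}(d) guarantees a unique invariant distribution, the stationary laws of $\{Y_k\}$ under $\theta$ and $\theta^*$ must coincide, whence $p_\theta(Y_1|\overline{\bf Y}_{-m}^0,{\bf W}_{-m}^1)=p_{\theta^*}(Y_1|\overline{\bf Y}_{-m}^0,{\bf W}_{-m}^1)$ $\pstar$-a.s.\ for every $m$, contradicting Assumption \ref{assn_consis}(b). Hence $l(\theta)<l(\theta^*)$ for all $\theta\ne\theta^*$.

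Finally I would assemble the argmax step. By continuity of $l$ and compactness of $\Theta$, for any $\varepsilon>0$ we have $\eta:=\inf_{|\theta-\theta^*|\ge\varepsilon}\{l(\theta^*)-l(\theta)\}>0$. On the $\pstar$-full event where the uniform strong law holds, the uniform gap is below $\eta/2$ for all large $n$; since $l_n(\hat\theta_{x_0},x_0)\ge l_n(\theta^*,x_0)$ for every $x_0$, this sandwiches $l(\hat\theta_{x_0})\ge l(\theta^*)-\eta$, which is incompatible with $|\hat\theta_{x_0}-\theta^*|\ge\varepsilon$; the bound being uniform in $x_0$ delivers $\sup_{x_0}|\hat\theta_{x_0}-\theta^*|\to0$. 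I expect the strictness of the information inequality — passing from the finite-$m$ nondegeneracy of Assumption \ref{assn_consis}(b) to strict separation of the limiting criterion $l$ — to be the main obstacle, precisely because it requires linking the infinite-past predictive densities back to the finite-conditioning identification condition through the uniqueness of the invariant law.
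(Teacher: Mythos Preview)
Your overall strategy and the uniform-convergence portion match the paper's proof closely: the paper also combines Lemma \ref{lemma_lnx0} with Lemma \ref{lemma3_dmr} and an ergodic/equicontinuity decomposition (into pieces corresponding to $\Delta_{k,0}\to\Delta_{k,\infty}$, local oscillation of $\Delta_{k,\infty}$ via Lemma \ref{lemma_4_dmr}, and the pointwise ergodic theorem for $\Delta_{k,\infty}$), and then applies the standard argmax argument uniformly in $x_0$.

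The one substantive difference is the identification step, and here the paper's route is the reverse of yours. You work at $m=\infty$ and then try to push equality of the infinite-past predictive densities back down to finite $m$ via invariant-distribution reasoning---precisely the passage you flag as the main obstacle. The paper instead stays at finite $m$ until the very end: for each fixed $m$, Assumption \ref{assn_consis}(b) and the information inequality (Lemma 2.2 of Newey--McFadden) already give that $\theta\mapsto\mathbb{E}_{\theta^*}[\log p_\theta(Y_1\mid\overline{\bf Y}_{-m}^0,{\bf W}_{-m}^1)]$ is uniquely maximized at $\theta^*$; since these finite-$m$ criteria converge to $l(\theta)$ uniformly in $\theta$ (Lemma \ref{lemma3_dmr} plus dominated convergence), the paper concludes that $l$ inherits the unique maximizer. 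Because Assumption \ref{assn_consis}(b) is stated directly at the finite-$m$ level, this direction of argument never needs the delicate step you identified---linking the $m=\infty$ predictive densities back to finite conditioning through uniqueness of the invariant law.
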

\citet[][Theorem 3]{francq98stat} prove the consistency of the MLE when the state space of $X_k$ is finite. Proposition \ref{prop_consistency} generalizes Theorem 3 of \citet{francq98stat} in the following three aspects. First, we allow $X_k$ to be continuously distributed. Second, we analyze the log-likelihood function conditional on $X_0=x_0$, whereas \citet{francq98stat} set the initial distribution of $X_1$ to any probability vector with strictly positive elements. In other words, we allow for zeros in the postulated initial distribution of $\{X_k\}$. Third, we allow for an exogenous covariate $\{W_k\}_{k=0}^n$. \citet{leroux92spa},  \citet{legrandmevel00math}, and \citet{doucmatias01bernouiil} analyze the asymptotic property of the MLE of hidden Markov models, which are the special case of the model considered here in that $g_\theta(Y_k|\overline{\bf Y}_{k-1},X_k,W_k)$ does not depend on $\overline{\bf Y}_{k-1}$.
 
Define the MLE with a probability measure $\xi$ on $\mathcal{B}(\mathcal{X})$ for $x_0$ as $\hat \theta_{\xi}:= \argmax_{\theta \in \Theta} l_n(\theta,\xi)$ with $l_n(\theta,\xi)$ defined in (\ref{l_n_xi}). Proposition \ref{prop_consistency} implies the following corollary. 
\begin{corollary} \label{cor_consistency_xi}
Assume Assumptions \ref{assn_a1}--\ref{assn_consis}. Then, for any $\xi$, $\hat \theta_{\xi} \to \theta^*$ $\mathbb{P}_{\theta^*}$-a.s.
\end{corollary}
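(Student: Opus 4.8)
The plan is to show that the normalized integrated objective $n^{-1}l_n(\theta,\xi)$ has the same almost-sure uniform limit over $\theta$ as $n^{-1}l_n(\theta,x_0)$ and $n^{-1}l_n(\theta)$, namely $l(\theta)$, and then to run the same argmax-continuity argument that yields Proposition \ref{prop_consistency}. The crucial observation is that integrating $p_\theta({\bf Y}_1^n|\overline{\bf Y}_0,{\bf W}_0^n,x_0)$ against a \emph{probability} measure $\xi$ cannot push $l_n(\theta,\xi)$ outside the range of $l_n(\theta,x_0)$ over $x_0\in\mathcal{X}$, so the $o(n)$ control supplied by Lemma \ref{lemma_lnx0} transfers directly to $l_n(\theta,\xi)$. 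This is precisely why Lemma \ref{lemma_lnx0} was stated with a supremum over $x_0$.

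First I would write, using (\ref{l_n_x0}) and (\ref{l_n_xi}), $l_n(\theta,\xi)=\log\int_{\mathcal{X}} e^{l_n(\theta,x_0)}\,\xi(dx_0)$. Because $\xi$ is a probability measure and $\log$ is monotone, this gives the sandwich $\inf_{x_0\in\mathcal{X}} l_n(\theta,x_0)\le l_n(\theta,\xi)\le \sup_{x_0\in\mathcal{X}} l_n(\theta,x_0)$. The stationary likelihood admits the representation $e^{l_n(\theta)}=\int_{\mathcal{X}} e^{l_n(\theta,x_0)}\,p_\theta(x_0|\overline{\bf Y}_0,{\bf W}_0^n)\mu(dx_0)$, i.e.\ an average of the same integrand against the data-dependent filtered probability measure of $X_0$, so $l_n(\theta)$ lies in the identical interval. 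Consequently $|l_n(\theta,\xi)-l_n(\theta)|\le \sup_{x_0\in\mathcal{X}}|l_n(\theta,x_0)-l_n(\theta)|$ for every $\theta\in\Theta$, and dividing by $n$ and invoking Lemma \ref{lemma_lnx0} yields $n^{-1}\sup_{\theta\in\Theta}|l_n(\theta,\xi)-l_n(\theta)|\to 0$ $\mathbb{P}_{\theta^*}$-a.s.

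From the proof of Proposition \ref{prop_consistency}, $n^{-1}l_n(\theta)$ converges to $l(\theta):=\mathbb{E}_{\theta^*}[\Delta_{0,\infty}(\theta)]$ uniformly in $\theta\in\Theta$ $\mathbb{P}_{\theta^*}$-a.s., while Assumption \ref{assn_consis}(b) together with the continuity in Assumption \ref{assn_consis}(a) makes $l(\theta)$ continuous with a unique maximizer at $\theta^*$. Combining these facts with the display from the previous step gives $n^{-1}l_n(\theta,\xi)\to l(\theta)$ uniformly in $\theta$ $\mathbb{P}_{\theta^*}$-a.s. The standard argmax argument on the compact set $\Theta$ — uniform convergence of the criterion to a limit with a well-separated unique maximizer — then forces any maximizer $\hat\theta_\xi$ of $l_n(\cdot,\xi)$ to satisfy $\hat\theta_\xi\to\theta^*$ $\mathbb{P}_{\theta^*}$-a.s., which is the assertion.

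I expect no substantive obstacle: this corollary is genuinely a corollary of Lemma \ref{lemma_lnx0} and Proposition \ref{prop_consistency}, and the entire content is the sandwich bound of the second paragraph. The only mild care needed is (i) the measurability of $x_0\mapsto l_n(\theta,x_0)$, so that the integral defining $l_n(\theta,\xi)$ and the sup/inf bounds are well posed, and (ii) recognizing that the uniform law of large numbers and the identification feeding Proposition \ref{prop_consistency} are statements about $l_n(\theta)$ and $l(\theta)$ and are therefore reused verbatim. No new moment, continuity, or mixing input beyond Assumptions \ref{assn_a1}--\ref{assn_consis} is required.
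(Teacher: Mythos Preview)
Your proposal is correct and follows essentially the same route as the paper. The paper likewise uses the sandwich $\inf_{x_0} l_n(\theta,x_0)\le l_n(\theta,\xi)\le \sup_{x_0} l_n(\theta,x_0)$, deduces $|n^{-1}l_n(\theta,\xi)-l(\theta)|\le \sup_{x_0}|n^{-1}l_n(\theta,x_0)-l(\theta)|$, notes continuity of $l_n(\theta,\xi)$ in $\theta$, and then invokes the proof of Proposition~\ref{prop_consistency}; the only cosmetic difference is that you detour through $l_n(\theta)$ via Lemma~\ref{lemma_lnx0} before reaching $l(\theta)$, whereas the paper compares $l_n(\theta,\xi)$ with $l(\theta)$ directly.
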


\section{Asymptotic distribution of the MLE}

In this section, we derive the asymptotic distribution of the MLE and consistency of the asymptotic covariance matrix estimate. Because $\hat \theta_{x_0}$ is consistent, expanding the first-order condition $\nabla_\theta l_n(\hat\theta_{x_0},x_0)=0$ around $\theta^*$ gives
\begin{equation} \label{taylor}
0 = \nabla_\theta l_n(\hat\theta_{x_0},x_0) = \nabla_\theta l_n(\theta^*,x_0) + \nabla_{\theta}^2 l_n(\overline\theta,x_0) (\hat \theta_{x_0} - \theta^*),
\end{equation}
where $\overline\theta \in [\theta^*,\hat\theta_{x_0}]$ and $\overline\theta$ may take different values across different rows of $\nabla_{\theta}^2 l_n(\overline\theta,x_0)$. In the following, we approximate $\nabla_{\theta}^j l_n(\theta,x_0) =\sum_{k=1}^n \nabla_{\theta}^j \log p_\theta (Y_k| \overline{\bf Y}_{0}^{k-1},{\bf W}_{0}^{k},X_0=x_0)$ for $j=1,2$ by $\sum_{k=1}^n \nabla_{\theta}^j \log p_\theta (Y_k| \overline{\bf Y}_{-\infty}^{k-1},{\bf W}_{-\infty}^{k})$, which is a sum of a stationary process. We then apply the central limit theorem and law of large numbers to $n^{-j/2}\sum_{k=1}^n \nabla_{\theta}^j \log p_\theta (Y_k| \overline{\bf Y}_{-\infty}^{k-1},{\bf W}_{-\infty}^{k})$. A similar expansion gives the asymptotic distribution of $n^{1/2}(\hat \theta_{\xi} - \theta^*)$.

We introduce additional assumptions. Define $\mathcal{X}_\theta^+:=\{(x,x') \in \mathcal{X}^2: q_\theta(x,x')>0\}$. 
\begin{assumption}\label{assn_distn}
There exists a constant $\delta>0$ such that the following conditions hold on $G:= \{\theta\in\Theta: |\theta-\theta^*| < \delta\}$: (a) For all $(\overline{\bf{y}},y',w,x,x')\in \mathcal{Y}^s\times \mathcal{Y}\times \mathcal{W} \times \mathcal{X}\times \mathcal{X}$, the functions $g_{\theta}(y'|\overline{\bf y},w,x)$ and $q_{\theta}(x,x')$ are twice continuously differentiable in $\theta \in G$. (b) $\sup_{\theta\in G} \sup_{x,x'\in \mathcal{X}_\theta^+} | \nabla_{\theta}\log q_{\theta}(x,x')|< \infty$ and $\sup_{\theta\in G} \sup_{x,x'\in \mathcal{X}_\theta^+} | \nabla_{\theta}^2\log q_{\theta}(x,x')|< \infty$. (c) $\mathbb{E}_{\theta^*}[\sup_{\theta\in G} \sup_{x\in \mathcal{X}} | \nabla_{\theta}\log g_{\theta}(Y_1|\overline{\bf{Y}}_0,x,W_1) |^{2}]< \infty$ and $\mathbb{E}_{\theta^*}[\sup_{\theta\in G} \sup_{x\in \mathcal{X}} | \nabla_{\theta}^2\log g_{\theta}(Y_1|\overline{\bf{Y}}_0,x,W_1) |]< \infty$. (d) For almost all $(\overline{\bf{y}},y',w)\in \mathcal{Y}^s \times \mathcal{Y}\times \mathcal{W}$, there exists a function $f_{\overline{\bf{y}},y',w}: \mathcal{X}\rightarrow \mathbb{R}^+$ in $L^1(\mu)$ such that $\sup_{\theta\in G} g_{\theta}(y'|\overline{\bf y},x,w) \leq f_{\overline{\bf y},y',w}(x)$. (e) For almost all $(x,\overline{\bf{y}},w) \in \mathcal{X} \times \mathcal{Y}^s\times \mathcal{W}$ and $j=1,2$, there exist functions $f^j_{x,\overline{\bf y},w}: \mathcal{Y}\rightarrow \mathbb{R}^+$ in $L^1(\nu)$ such that $|\nabla_{\theta}^j g_{\theta}(y'|\overline{\bf y},x,w) |\leq f^j_{x,\overline{\bf y},w}(y')$ for all $\theta\in G$.
\end{assumption} 
\begin{assumption}\label{assn_nabla_moment}
$\mathbb{E}_{\theta^*}[\sup_{m \geq 0} \sup_{\theta \in G} | \nabla_{\theta}\log p_{\theta}(Y_1|\overline{\bf{Y}}^0_{-m},{\bf W}^{1}_{-m}) |^{2}]< \infty$, \\
$\mathbb{E}_{\theta^*}[\sup_{m \geq 0}\sup_{\theta \in G} | \nabla_{\theta}^2\log p_{\theta}(Y_1|\overline{\bf{Y}}^0_{-m},{\bf W}^{1}_{-m}) |]< \infty$, \\
$\mathbb{E}_{\theta^*}[\sup_{m \geq 0} \sup_{\theta \in G}\sup_{x \in \mathcal{X}} | \nabla_{\theta}\log p_{\theta}(Y_1|\overline{\bf{Y}}^0_{-m},{\bf W}^{1}_{-m}, X_{-m}=x) |^{2}]< \infty$, and \\
$\mathbb{E}_{\theta^*}[\sup_{m \geq 0}\sup_{\theta \in G}\sup_{x \in \mathcal{X}} | \nabla_{\theta}^2\log p_{\theta}(Y_1|\overline{\bf{Y}}^0_{-m},{\bf W}^{1}_{-m}, X_{-m}=x) |]< \infty$.
\end{assumption} 
Assumption \ref{assn_distn} is the same as Assumptions (A6)--(A8) of DMR except for accommodating the case $\inf_{(x,x') \in \mathcal{X}^2}q_\theta(x,x') =0$ and the covariate $W$.  In Assumption \ref{assn_distn}(b), the supremum is taken over $\mathcal{X}_\theta^+$ because $\nabla_{\theta}\log q_\theta(x,x')$ and $\nabla_{\theta}^2 \log q_\theta(x,x')$ are not well-defined when $q_\theta(x,x')=0$. Examples \ref{example_1}--\ref{example_4} satisfy Assumption \ref{assn_distn}. Assumption \ref{assn_nabla_moment} is a high-level assumption that bounds the moments of $\nabla_{\theta}^j \log p_\theta (Y_k| \overline{\bf Y}_{-m}^{k-1},{\bf W}_{-m}^{k})$ and $\nabla_{\theta}^j \log p_\theta (Y_k| \overline{\bf Y}_{-m}^{k-1},{\bf W}_{-m}^{k},X_{-m}=x)$ uniformly in $m$. When $p=1$, DMR could derive Assumption \ref{assn_nabla_moment} by using the $L^{3-j}(\mathbb{P}_{\theta^*})$ convergence of $\nabla_{\theta}^j \log p_\theta (Y_k| \overline{\bf Y}_{-m}^{k-1},{\bf W}_{-m}^{k})$ and $\nabla_{\theta}^j \log p_\theta (Y_k| \overline{\bf Y}_{-m}^{k-1},{\bf W}_{-m}^{k},X_{-m}=x)$ to $\nabla_{\theta}^j \log p_\theta (Y_k| \overline{\bf Y}_{-\infty}^{k-1},{\bf W}_{-\infty}^{k})$ as $m \to \infty$. When $p \geq 2$, we need to assume Assumption \ref{assn_nabla_moment} because our Lemma \ref{lemma_psi_bound} only shows that these sequences converge to $\nabla_\theta^j \log p_\theta({\bf Y}_k|\overline{\bf Y}^{k-1}_{-\infty},{\bf W}_{-\infty}^k)$ in probability.
 
\subsection{Asymptotic distribution of the score function}

This section derives the asymptotic distribution of $n^{-1/2}\nabla_\theta l_n(\theta^*,x_0)$ and $n^{-1/2}\nabla_\theta l_n(\theta^*,\xi)$. We introduce a result known as the Louis missing information principle \citep{louis82jrssb}, which expresses the derivatives of the log-likelihood function of a latent variable model in terms of the conditional expectation of the derivatives of the \emph{complete data} log-likelihood function. Let $(X,Y,W)$ be random variables with $p_\theta(y,x|w)$ denoting the joint density of $(Y,X)$ given $W$, and let $p_\theta(y|w)$ be the marginal density of $Y$ given $W$. Then, a straightforward differentiation that is valid under Assumption \ref{assn_distn} gives $\nabla_\theta \log p_\theta(Y|W)  = \mathbb{E}_{\theta}\left[ \nabla_\theta \log p_\theta(Y,X|W) \middle| Y,W \right]$. 
 In terms of the variables in our model, we have, for any $k \geq 1$ and $m \geq 0$,
\begin{equation} \label{louis_p}
\begin{aligned}
& \nabla_\theta \log p_\theta({\bf Y}_{-m+1}^k|\overline{\bf Y}_{-m},{\bf W}_{-m}^k,X_{-m}) \\
& = \mathbb{E}_{\theta}\left[ \nabla_\theta \log p_\theta({\bf Y}_{-m+1}^k, {\bf X}_{-m+1}^k|\overline{\bf Y}_{-m},{\bf W}_{-m}^k,X_{-m})\middle| \overline{\bf Y}_{-m}^{k},{\bf W}_{-m}^k,X_{-m} \right] \\
& = \mathbb{E}_{\theta}\left[ \sum_{t=-m+1}^k \nabla_\theta \log p_\theta(Y_t,X_t|\overline{\bf Y}_{t-1}, X_{t-1}, W_t)\middle| \overline{\bf Y}_{-m}^{k},{\bf W}_{-m}^k,X_{-m} \right],
\end{aligned}
\end{equation} 
where the last equality follows from Assumption \ref{assn_a2}. 

Define $\overline{\bf Z}_{k-1}^k:=(Y_k,X_{k},\overline{\bf Y}_{k-1},X_{k-1})$. For $j=1,2$, denote the derivatives of the complete data log-density of $(Y_k,X_{k})$ given $(\overline{\bf Y}_{k-1},X_{k-1},W_k)$ by
\begin{align*} 
\phi^j(\theta,\overline{\bf Z}_{k-1}^k,W_k) & :=\nabla_{\theta}^j \log p_{\theta}(Y_k,X_k|\overline {\bf Y}_{k-1}, X_{k-1},W_k) \\
& = \nabla_{\theta}^j \log q_{\theta}(X_{k-1},X_{k}) + \nabla_{\theta}^j \log g_{\theta}(Y_k,|\overline {\bf Y}_{k-1},X_{k},W_k).
\end{align*}
We use a short-handed notation $\phi^j_{\theta k}:=\phi^j(\theta,\overline{\bf Z}_{k-1}^k,W_k)$. We also suppress the superscript $1$ from $\phi^1_{\theta k}$, so that $\phi_{\theta k}=\phi^1_{\theta k}$. Let $|\phi^j_k|_{\infty}:= \sup_{\theta\in G} \sup_{x,x'\in \mathcal{X}_\theta^+}|\nabla_\theta^j \log q_\theta(x,x')| \\+ \sup_{\theta\in G} \sup_{x \in \mathcal{X}}|\nabla_{\theta}^j \log g_\theta(Y_k|\overline{\bf{Y}}_{k-1},x,W_k)|$.
Define, for $x \in \mathcal{X}$, $k \geq 1$, $m \geq 0$, and $j=1,2$,\footnote{DMR (page 2272) use the symbol $\Delta_{k,m,x}(\theta)$ to denote our $\Psi_{k,m,x}^1(\theta)$, but we use $\Psi_{k,m,x}(\theta)$ to avoid confusion with $\Delta_{k,m,x}(\theta)$ used in Lemma \ref{lemma3_dmr}.} 
\begin{align}
\Psi_{k,m,x}^j(\theta) & := \mathbb{E}_{\theta}\left[ \sum_{t=-m+1}^k \phi^j_{\theta t} \middle| \overline{\bf Y}^{k}_{-m}, {\bf W}^{k}_{-m}, X_{-m}= x \right] - \mathbb{E}_{\theta}\left[ \sum_{t=-m+1}^{k-1} \phi^j_{\theta t} \middle| \overline{\bf Y}^{k-1}_{-m}, {\bf W}^{k-1}_{-m}, X_{-m}= x \right]. \label{Psi_x_defn}
\end{align} 
It follows from  (\ref{louis_p}) and (\ref{Psi_x_defn}) that $\Psi_{k,m,x}^1(\theta) =  \nabla_\theta \log p_{\theta}({\bf Y}^k_{-m+1}|\overline{\bf Y}_{-m}, {\bf W}^{k}_{-m}, X_{-m}=x) -$ \\$ \nabla_\theta \log p_{\theta}({\bf Y}^{k-1}_{-m+1}|\overline{\bf Y}_{-m}, {\bf W}^{k-1}_{-m}, X_{-m}=x) = \nabla_\theta \log p_{\theta}(Y_k|\overline{\bf Y}^{k-1}_{-m}, {\bf W}^{k}_{-m}, X_{-m}=x)$. Therefore,   we can express $\nabla_\theta l_n(\theta,x_0)$ as
\[
\nabla_\theta l_n(\theta,x_0) = \sum_{k=1}^n \nabla_\theta \log p_{\theta}(Y_k|\overline{\bf Y}^{k-1}_{0}, {\bf W}^{k}_{0}, X_{0}= x_0) = \sum_{k=1}^n \Psi_{k,0,x_0}^1(\theta).
\]

Lemma \ref{lemma_psi_bound} below shows that $\{\Psi_{k,m,x}^j(\theta)\}_{m \geq 0}$ is a Cauchy sequence that converges to a limit at an exponential rate in probability. Note that $\Psi_{k,m,x}^j(\theta)$ is a function of $\mathbb{E}_{\theta}[ \phi^j_{\theta t} | \cdot]$ for $t=-m+1,\ldots,k$. When $t$ is large, the difference between $\mathbb{E}_{\theta}[ \phi^j_{\theta t} |\overline{\bf Y}^{k}_{-m}, {\bf W}^{k}_{-m}, X_{-m}= x]$ and $\mathbb{E}_{\theta}[ \phi^j_{\theta t} |\overline{\bf Y}^{k}_{-m'}, {\bf W}^{k}_{-m'}, X_{-m'}= x']$ with $m' >m$ is small because the chain $\{X_t\}_{t=-m'}^{k}$ conditional on $(\overline{\bf Y}_{-m'}^k, {\bf W}_{-m'}^k)$ forgets its past (i.e., $\overline{\bf Y}_{-m'}^{m}$, ${\bf W}_{-m'}^{m}$, and $X_{-m}$) at an exponential rate by virtue of Lemma \ref{corollary_1}. When $t$ is small,  the term $\mathbb{E}_{\theta}[ \phi^j_{\theta t} |\overline{\bf Y}^{k}_{-m}, {\bf W}^{k}_{-m}, X_{-m}= x] - \mathbb{E}_{\theta}[ \phi^j_{\theta t} |\overline{\bf Y}^{k-1}_{-m}, {\bf W}^{k-1}_{-m}, X_{-m}= x]$ in $\Psi_{k,m,x}^j(\theta)$ is small because Lemma \ref{lemma_dmr_39} in the appendix shows that the time-reversed process $\{X_{k-t}\}_{0 \leq t\leq k+m}$ conditional on $(\overline{\bf Y}^{k}_{-m},{\bf W}^{k}_{-m})$ forgets its initial condition (i.e., $Y_k$ and $W_{k}$) at an exponential rate. 

Define, for $k \geq 0$, $m \geq 0$, and $j=1,2$,
\begin{align*}
\Psi_{k,m}^j(\theta) &:= \mathbb{E}_{\theta}\left[ \sum_{t=-m+1}^k \phi^j_{\theta t} \middle| \overline{\bf Y}^{k}_{-m}, {\bf W}^{k}_{-m}\right] - \mathbb{E}_{\theta}\left[ \sum_{t=-m+1}^{k-1} \phi^j_{\theta t} \middle| \overline{\bf Y}^{k-1}_{-m}, {\bf W}^{k-1}_{-m}\right].
\end{align*}
Note that $\Psi_{k,m}^1(\theta) = \nabla_\theta \log p_{\theta}(Y_k|\overline{\bf Y}^{k-1}_{-m}, {\bf W}^{k}_{-m})$. From Lemma \ref{corollary_1} and Lemma \ref{lemma_dmr_39}, we obtain the following bound on $\Psi_{k,m,x}^j(\theta)-\Psi_{k,m}^j(\theta)$ and $\Psi_{k,m,x}^j(\theta)-\Psi_{k,m',x'}^j(\theta)$.
\begin{lemma} \label{lemma_psi_bound}
Assume Assumptions \ref{assn_a1}--\ref{assn_nabla_moment}. Then, for $j=1,2$, there exist a constant $\rho \in (0,1)$, random sequences $\{A_{k,m}\}_{k\geq 1, m \geq 0}$ and $\{B_{m}\}_{m\geq 0}$, and a random variable $K_j \in L^{3-j}(\mathbb{P}_{\theta^*})$ such that, for all $1 \leq k \leq n$ and $m' \geq m \geq 0$, 
\begin{align*}
(a) & \quad  \sup_{x \in \mathcal{X}} \sup_{\theta \in G}\ \left|\Psi_{k,m,x}^j(\theta)-\Psi_{k,m}^j(\theta)\right| \leq K_j (k + m)^2 	\rho^{\lfloor (k+m)/4(p+1)\rfloor} A_{k,m}, \\
(b) & \quad \sup_{x,x' \in \mathcal{X}}  \sup_{\theta \in G}\ \left|\Psi_{k,m,x}^j(\theta)-\Psi_{k,m',x'}^j(\theta)\right| \leq [ K_j (k + m)^2 + B_m] \rho^{\lfloor (k+m)/4(p+1)\rfloor} A_{k,m}, 
\end{align*}
where $\mathbb{P}_{\theta^*}\left(A_{k,m} \geq 1\ \text{i.o.}\right)=0$, $B_m < \infty$ $\pstar$-a.s., and the distribution function of $B_m$ does not depend on $m$.
\end{lemma}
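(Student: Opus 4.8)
The plan is to adapt the score-forgetting argument of DMR to the stochastic-forgetting regime $p\geq 2$, combining the forward contraction of Lemma \ref{corollary_1} with the time-reversed contraction of Lemma \ref{lemma_dmr_39}. First I would expand each $\Psi^j_{k,m,x}(\theta)$ by the Louis/Fisher identity. For $j=1$ this gives
\[
\Psi^1_{k,m,x}(\theta)=\sum_{t=-m+1}^{k}\left\{\mathbb{E}_\theta\big[\phi_{\theta t}\,\big|\,\overline{\bf Y}^{k}_{-m},{\bf W}^{k}_{-m},X_{-m}=x\big]-\mathbb{E}_\theta\big[\phi_{\theta t}\,\big|\,\overline{\bf Y}^{k-1}_{-m},{\bf W}^{k-1}_{-m},X_{-m}=x\big]\right\},
\]
where the time-$k$ summand carries no second term; for $j=2$ the Louis identity adds the conditional-covariance (missing-information) terms, producing an analogous double sum $\sum_{t,s}\mathrm{Cov}_\theta(\phi_{\theta t},\phi_{\theta s}\mid\cdot)$. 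The quantities $\Psi^j_{k,m}(\theta)$ admit the same expansion with $X_{-m}$ integrated out under its conditional law.

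Next, for each summand I would control the effect of the initial conditioning $X_{-m}=x$ (part (a)) or of passing from $(m,x)$ to $(m',x')$ (part (b)) by two competing contraction rates. The sensitivity of $\mathbb{E}_\theta[\phi^j_{\theta t}\mid\cdots,X_{-m}=x]$ to the initial state $x$ is governed by the forward forgetting of Lemma \ref{corollary_1} and decays like the product $\prod_i(1-\omega(\overline{\bf Y}^{\cdot}_{\cdot},{\bf W}^{\cdot}_{\cdot}))$ over roughly $(t+m)/p$ blocks; the sensitivity of the inner $k$-versus-$(k-1)$ difference to the terminal data $(Y_k,W_k)$ is governed by the time-reversed forgetting of Lemma \ref{lemma_dmr_39} and decays over roughly $(k-t)/p$ blocks. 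Crucially, \emph{both} are needed even for part (a): pure forward forgetting bounds each bracket by $\rho^{t+m}$, which does not decay in $k+m$ for $t$ near $-m$, so I would split the sum near $t\approx(k-m)/2$ and apply the time-reversed bound on the lower half and the forward bound on the upper half. Each summand is then dominated by $|\phi^j_{\theta t}|_\infty$ times a product of minorization factors spanning at least $\lfloor(k+m)/c\rfloor$ blocks, with $c=4(p+1)$.

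The third step converts the random product of minorization coefficients into a deterministic geometric rate. Because $\{\omega(\overline{\bf Y}^{pi-1}_{pi-p},{\bf W}^{pi-1}_{pi-p})\}_i$ is stationary ergodic and $\epsilon:=\mathbb{P}_{\theta^*}(\omega\leq\delta)$ is small for small $\delta$, the ergodic theorem guarantees that, almost surely and for all but finitely many block indices, at least a fraction $1-2\epsilon$ of the factors satisfy $1-\omega\leq 1-\delta$. Hence the product is at most $\rho^{\lfloor(k+m)/c\rfloor}A_{k,m}$ with $\rho:=(1-\delta)^{1-2\epsilon}\in(0,1)$ and $A_{k,m}$ collecting the remaining near-one factors; a Borel--Cantelli/strong-law argument, exactly as in the sketch preceding Lemma \ref{lemma_lnx0}, yields $\mathbb{P}_{\theta^*}(A_{k,m}\geq 1\ \text{i.o.})=0$. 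The magnitudes are controlled by Assumption \ref{assn_distn}(b)(c): choosing $K_1,K_2$ to dominate the relevant $|\phi^j_{\theta t}|_\infty$ (a single score for $j=1$; a product of two scores or one Hessian for $j=2$), the moment bounds of Assumptions \ref{assn_distn}(c) and \ref{assn_nabla_moment} together with Cauchy--Schwarz place $K_j\in L^{3-j}(\mathbb{P}_{\theta^*})$. Summing the geometrically weighted magnitude bounds over the at most $(k+m)^2$ index pairs yields part (a); for part (b) the extra summands with $-m'<t\leq-m$ present in $\Psi^j_{k,m',x'}$ but absent from $\Psi^j_{k,m,x}$ are absorbed into $B_m$, whose law is $m$-independent by stationarity and which is finite almost surely.

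The main obstacle is precisely this conversion of the \emph{stochastic} forgetting rate into a deterministic $\rho$ times an almost-surely-eventually-bounded multiplier $A_{k,m}$, carried out while juggling the forward and time-reversed contractions simultaneously and keeping $K_j$ in the sharp space $L^{3-j}$. Unlike DMR, where $\prod_i(1-\omega)$ is deterministically $\rho^{k+m}$, here $\omega$ can be arbitrarily close to zero, so geometric decay survives only after the ergodic averaging; ensuring that the random number of near-one factors corrupts neither the integrability of $K_j$ nor the i.o. control of $A_{k,m}$ is the delicate point.
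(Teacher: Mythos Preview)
Your overall architecture---expand $\Psi^j_{k,m,x}$ as a sum over $t$ of differences of smoothed expectations, bound each summand by the minimum of a forward contraction (Lemma~\ref{corollary_1}) and a backward contraction (Lemma~\ref{lemma_dmr_39}), then convert the random product $\prod_i(1-\omega_i)$ into a deterministic geometric rate via the ergodic theorem---is exactly the paper's approach. Two specific points, however, are misidentified.

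First, you conflate $\Psi^2_{k,m,x}$ with the full second derivative. By definition (\ref{Psi_x_defn}), $\Psi^j_{k,m,x}(\theta)$ involves only conditional expectations of $\phi^j_{\theta t}=\nabla^j_\theta\log p_\theta(Y_t,X_t\mid\cdot)$; there are \emph{no} covariance terms in $\Psi^2$. The covariance (missing-information) piece is $\Gamma_{k,m,x}$, defined in (\ref{gamma_x_defn}) and handled separately in Lemma~\ref{lemma_gamma_cgce}. Consequently the proof of the present lemma for $j=2$ is structurally identical to $j=1$, with $\phi^2$ replacing $\phi^1$ and the moment space $L^1$ replacing $L^2$; no double sum over $(t,s)$ arises here.

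Second---and this is the genuine gap---your account of the factor $(k+m)^2$ and of the random variable $K_j$ is wrong. There is only a single sum $\sum_{t=-m+1}^k$ in part (a), not ``$(k+m)^2$ index pairs.'' The factor $(k+m)^2$ comes from the DMR max-extraction trick: after bounding the difference by $4\sum_t|\phi^j_t|_\infty(\Omega_{t-1,-m}\wedge\tilde\Omega_{t,k-1})$, one pulls out $\max_{-m\leq t'\leq k}|\phi^j_{t'}|_\infty$ and bounds it by $\sum_{t}(|t|\vee 1)^2|\phi^j_t|_\infty/(|t|\vee 1)^2\leq 2(k\vee m)^2 K_j$ with $K_j:=\sum_{t=-\infty}^{\infty}|\phi^j_t|_\infty/(|t|\vee 1)^2\in L^{3-j}(\mathbb{P}_{\theta^*})$. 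Your proposal ``choose $K_j$ to dominate $|\phi^j_{\theta t}|_\infty$'' does not produce a single random variable in $L^{3-j}$ uniformly over all $t\in[-m+1,k]$ and all $(k,m)$; the weighted-sum device is what makes this work. The remaining sum $\sum_t(\Omega_{t-1,-m}\wedge\tilde\Omega_{t,k-1})$ is then shown, via the ergodic argument you sketch, to be bounded by $\rho^{\lfloor(k+m)/4(p+1)\rfloor}A_{k,m}$.
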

Because $B_m \rho^{\lfloor (k+m)/4(p+1)\rfloor/2} \to_p 0$ as $m \to \infty$, Lemma \ref{lemma_psi_bound} implies that $\{\Psi_{k,m,x}^{1}(\theta)\}_{m\geq 0}$ converges to $\Psi_{k,\infty}^{1}(\theta) = \nabla_{\theta} \log p_{\theta}(Y_k|\overline{\bf Y}^{k-1}_{-\infty}, {\bf W}^k_{-\infty})$ in probability uniformly in $\theta \in G$ and $x \in \mathcal{X}$. Define the filtration $\mathcal{F}$ by $\mathcal{F}_k:=\sigma((\overline{\bf{Y}}_{i},W_{i+1}): -\infty < i\leq k)$. It follows from $\mathbb{E}_{\theta^*}[\Psi_{k,m}^1(\theta^*)|\overline{\bf Y}^{k-1}_{-m}, {\bf W}^{k}_{-m}]=0$, Assumption \ref{assn_nabla_moment}, and combining Exercise 2.3.7 and Theorem 5.5.9 of \citet{durrett10book} that \\$\mathbb{E}_{\theta^*}[\Psi_{k,\infty}^1(\theta^*)|\overline{\bf{Y}}_{-\infty}^{k-1}, {\bf W}^{k}_{-\infty}]=0$ and $I(\theta^*):=\mathbb{E}_{\theta^*}[\Psi_{0,\infty}^1(\theta^*)(\Psi_{0,\infty}^1(\theta^*))'] < \infty$. Therefore, \\ $\{\Psi_{k,\infty}^1(\theta^*)\}_{k=-\infty}^{\infty}$ is an $(\mathcal{F},\mathbb{P}_{\theta^*})$-adapted stationary, ergodic, and square integrable martingale difference sequence, to which a martingale central limit theorem is applicable.

 Setting $m=0$ and letting $m' \to \infty$ in Lemma \ref{lemma_psi_bound} shows that \\$n^{-1/2}\sum_{k=1}^n \Psi_{k,0,x_0}^1(\theta^*) - n^{-1/2}\sum_{k=1}^n \Psi_{k,\infty}^{1}(\theta^*)$ is bounded by $n^{-1/2}\sum_{k=1}^n k^2 \tilde \rho^k$ in probability for some $\tilde \rho \in (0,1)$. Consequently, as the following proposition shows, the score function is asymptotically normally distributed. 
\begin{proposition}\label{prop_score}
Assume Assumptions \ref{assn_a1}--\ref{assn_nabla_moment}. Then, (a) for any $x_0 \in \mathcal{X}$, $n^{-1/2}\nabla_\theta l_n(\theta^*,x_0) \to_d N(0,I(\theta^*))$; (b) for any probability measure $\xi$ on $\mathcal{B}(\mathcal{X})$ for $x_0$, $n^{-1/2}\nabla_\theta l_n(\theta^*,\xi) \to_d N(0,I(\theta^*))$.
\end{proposition}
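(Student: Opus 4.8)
The plan is to approximate the conditional score $n^{-1/2}\nabla_\theta l_n(\theta^*,x_0)=n^{-1/2}\sum_{k=1}^n \Psi_{k,0,x_0}^1(\theta^*)$ by the stationary martingale sum $n^{-1/2}\sum_{k=1}^n \Psi_{k,\infty}^1(\theta^*)$, establish a central limit theorem for the latter, and then show the approximation error is asymptotically negligible using Lemma \ref{lemma_psi_bound}. For part (a), I would write $n^{-1/2}\nabla_\theta l_n(\theta^*,x_0)=n^{-1/2}\sum_{k=1}^n \Psi_{k,\infty}^1(\theta^*)+n^{-1/2}\sum_{k=1}^n(\Psi_{k,0,x_0}^1(\theta^*)-\Psi_{k,\infty}^1(\theta^*))$ and treat the two terms separately.

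By the discussion preceding the proposition, $\{\Psi_{k,\infty}^1(\theta^*)\}_{k=-\infty}^{\infty}$ is an $(\mathcal{F},\mathbb{P}_{\theta^*})$-adapted, stationary, ergodic, square-integrable martingale difference sequence with covariance $I(\theta^*)$. To handle the vector-valued case I would invoke the Cram\'er--Wold device: for an arbitrary $\lambda\in\mathbb{R}^q$, the scalar sequence $\lambda'\Psi_{k,\infty}^1(\theta^*)$ is again a stationary ergodic square-integrable martingale difference sequence, so the martingale central limit theorem for such sequences (cf.\ \citet{durrett10book}) gives $n^{-1/2}\sum_{k=1}^n \lambda'\Psi_{k,\infty}^1(\theta^*)\to_d N(0,\lambda' I(\theta^*)\lambda)$, whence $n^{-1/2}\sum_{k=1}^n \Psi_{k,\infty}^1(\theta^*)\to_d N(0,I(\theta^*))$.

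Next I would control the remainder. Setting $m=0$ and letting $m'\to\infty$ in Lemma \ref{lemma_psi_bound}(b) bounds $\sup_{x_0\in\mathcal{X}}|\Psi_{k,0,x_0}^1(\theta^*)-\Psi_{k,\infty}^1(\theta^*)|$ by $[K_1 k^2+B_0]\rho^{\lfloor k/4(p+1)\rfloor}A_{k,0}$. Since $\mathbb{P}_{\theta^*}(A_{k,0}\geq 1\ \text{i.o.})=0$, on a set of probability one $A_{k,0}<1$ for all large $k$, and as $K_1$ and $B_0$ are finite $\mathbb{P}_{\theta^*}$-a.s.\ while the geometric factor dominates the polynomial $k^2$, the series $\sum_{k=1}^{\infty}[K_1 k^2+B_0]\rho^{\lfloor k/4(p+1)\rfloor}A_{k,0}$ converges $\mathbb{P}_{\theta^*}$-a.s. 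Dividing by $n^{1/2}\to\infty$ shows $n^{-1/2}\sum_{k=1}^n(\Psi_{k,0,x_0}^1(\theta^*)-\Psi_{k,\infty}^1(\theta^*))\to_p 0$, and part (a) follows by Slutsky's theorem. For part (b), I would use that $\nabla_\theta l_n(\theta^*,\xi)=\int \nabla_\theta l_n(\theta^*,x_0)\,\pi_n(dx_0)$ is a posterior-weighted average of the conditional scores, where $\pi_n(dx_0)\propto p_{\theta^*}({\bf Y}_1^n|\overline{\bf Y}_0,{\bf W}_0^n,x_0)\xi(dx_0)$ is a random probability measure. Because the remainder bound above is uniform in $x_0$, $\sup_{x_0}|n^{-1/2}\nabla_\theta l_n(\theta^*,x_0)-n^{-1/2}\sum_{k=1}^n \Psi_{k,\infty}^1(\theta^*)|\to_p 0$, so $|n^{-1/2}\nabla_\theta l_n(\theta^*,\xi)-n^{-1/2}\sum_{k=1}^n \Psi_{k,\infty}^1(\theta^*)|$ is dominated by this supremum and also vanishes in probability; part (b) then reduces to part (a).

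The main obstacle is showing the approximation error is genuinely $o_p(n^{1/2})$ despite the random, non-deterministic coefficients $A_{k,m}$ that arise because the mixing bound of Lemma \ref{corollary_1} is stochastic rather than deterministic when $p\geq 2$. This is exactly where the infinitely-often control $\mathbb{P}_{\theta^*}(A_{k,m}\geq 1\ \text{i.o.})=0$ and the summability of $\sum_k k^2\rho^{\lfloor k/4(p+1)\rfloor}$ do the essential work; once these are combined, the martingale central limit theorem applies routinely given the martingale difference structure already established, and the uniformity in $x_0$ handles the integrated initial distribution $\xi$ without additional difficulty.
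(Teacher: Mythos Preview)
Your proposal is correct and follows essentially the same route as the paper: approximate $n^{-1/2}\sum_{k=1}^n\Psi_{k,0,x_0}^1(\theta^*)$ by $n^{-1/2}\sum_{k=1}^n\Psi_{k,\infty}^1(\theta^*)$ via Lemma~\ref{lemma_psi_bound}(b) with $m=0$, $m'\to\infty$, apply a martingale CLT to the stationary ergodic square-integrable martingale difference $\{\Psi_{k,\infty}^1(\theta^*)\}$, and for part (b) use that $\nabla_\theta l_n(\theta^*,\xi)$ is a posterior-weighted average of the $\nabla_\theta l_n(\theta^*,x_0)$ together with the uniformity in $x_0$ of the remainder bound. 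The only cosmetic difference is that the paper handles the finitely many $k$ with $A_{k,0}\geq 1$ by invoking Assumption~\ref{assn_nabla_moment} directly (so that each $|\Psi_{k,0,x_0}^1|+|\Psi_{k,\infty}^1|$ is a.s.\ finite), whereas you absorb them into the a.s.\ convergent series; both arguments are valid.
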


\subsection{Convergence of the Hessian}

This section derives the probability limit of $n^{-1}\nabla_{\theta}^2 l_n(\theta,x_0)$ and $n^{-1}\nabla_{\theta}^2 l_n(\theta,\xi)$ when $\theta$ is in a neighborhood of $\theta^*$. 
 The Louis missing information principle for the second derivative is given by $\nabla_{\theta}^2 \log p_\theta(Y|W) = \mathbb{E}_{\theta}\left[ \nabla_{\theta}^2 \log p_\theta(Y,X|W) \middle| Y,W \right] + \text{var}_{\theta}\left[ \nabla_\theta \log p_\theta(Y,X|W)\middle| Y,W \right]$. In terms of the variables in our model, we have, for any $k \geq 1$ and $m \geq 0$,
\begin{equation} \label{louis2}
\begin{aligned}
&\nabla_{\theta}^2 \log p_\theta({\bf Y}_{-m+1}^k|\overline{\bf Y}_{-m},{\bf W}_{-m}^k,X_{-m}) \\
 = &\mathbb{E}_{\theta}\left[ \nabla_\theta^2 \log p_\theta({\bf Y}_{-m+1}^k, {\bf X}_{-m+1}^k|\overline{\bf Y}_{-m},{\bf W}_{-m}^k,X_{-m})\middle| \overline{\bf Y}_{-m}^{k},{\bf W}_{-m}^k,X_{-m} \right] \\
&+ \text{var}_{\theta}\left[ \nabla_\theta \log p_\theta({\bf Y}_{-m+1}^k, {\bf X}_{-m+1}^k|\overline{\bf Y}_{-m},{\bf W}_{-m}^k,X_{-m})\middle| \overline{\bf Y}_{-m}^{k},{\bf W}_{-m}^k,X_{-m}  \right].
\end{aligned}
\end{equation}

Define
\begin{align} 
\Gamma_{k,m,x}(\theta) & := \text{var}_{\theta}\left[ \sum_{t=-m+1}^k \phi_{\theta t} \middle| \overline{\bf Y}^{k}_{-m}, {\bf W}^{k}_{-m}, X_{-m}= x\right] - \text{var}_{\theta}\left[ \sum_{t=-m+1}^{k-1} \phi_{\theta t} \middle| \overline{\bf Y}^{k-1}_{-m}, {\bf W}^{k-1}_{-m},X_{-m}= x\right], \label{gamma_x_defn} \\
\Gamma_{k,m}(\theta) & := \text{var}_{\theta}\left[ \sum_{t=-m+1}^k \phi_{\theta t} \middle| \overline{\bf Y}^{k}_{-m}, {\bf W}^{k}_{-m}\right] - \text{var}_{\theta}\left[ \sum_{t=-m+1}^{k-1} \phi_{\theta t} \middle| \overline{\bf Y}^{k-1}_{-m}, {\bf W}^{k-1}_{-m}\right]. \label{gamma_defn} 
\end{align}
From (\ref{Psi_x_defn})--(\ref{gamma_defn}), we can write $\nabla_{\theta}^2 l_n(\theta,x_0)$ in terms of $\{\Psi^2_{k,m,x}(\theta)\}$ and $\{\Gamma_{k,m,x}(\theta)\}$ as
\begin{align*}
\nabla_{\theta}^2 l_n(\theta,x_0) &= \sum_{k=1}^n \nabla_{\theta}^2 \log p_\theta(Y_k|\overline{\bf Y}^{k-1}_{0}, {\bf W}^{k}_{0}, X_{0}= x_0) = \sum_{k=1}^n [\Psi_{k,0,x_0}^2(\theta) + \Gamma_{k,0,x_0}(\theta) ]. 
\end{align*}
The following lemma provides the bounds on $\Gamma_{k,m,x}(\theta)$ that are analogous to Lemma \ref{lemma_psi_bound}.
\begin{lemma} \label{lemma_gamma_cgce}
Assume Assumptions \ref{assn_a1}--\ref{assn_nabla_moment}. Then, there exist a constant $\rho \in (0,1)$, random sequences $\{C_{k,m}\}_{k\geq 1, m \geq 0}$ and $\{D_{m}\}_{m\geq 0}$, and a random variable $K \in L^{1}(\mathbb{P}_{\theta^*})$ such that, for all $1 \leq k \leq n$ and $m' \geq m \geq 0$,
\begin{align*}
(a) & \quad  \sup_{x \in \mathcal{X}} \sup_{\theta \in G} \left|\Gamma_{k,m,x}(\theta)-\Gamma_{k,m}(\theta)\right| \leq K (k + m)^3	\rho^{\lfloor (k+m)/8(p+1)\rfloor} C_{k,m}, \\
(b) & \quad  \sup_{x,x' \in \mathcal{X}}  \sup_{\theta \in G} \left|\Gamma_{k,m,x}(\theta)-\Gamma_{k,m',x'}(\theta)\right| \leq K [(k + m)^3 +D_m]\rho^{\lfloor (k+m)/16(p+1)\rfloor} C_{k,m},
\end{align*}
where $\mathbb{P}_{\theta^*}\left(C_{k,m} \geq 1 \text{ i.o.}\right)=0$, $D_m < \infty$ $\pstar$-a.s. and the distribution function of $D_m$ does not depend on $m$.
\end{lemma}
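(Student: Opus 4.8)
The plan is to follow the architecture of Lemma \ref{lemma_psi_bound}, replacing conditional-expectation increments by conditional-covariance increments. Writing $\mathcal{G}^x_{k}:=(\overline{\bf Y}^{k}_{-m},{\bf W}^{k}_{-m},X_{-m}=x)$, I first expand the conditional variance as a double sum of conditional covariances of the complete-data scores $\phi_{\theta t}$ and peel off the index $t=k$:
\begin{align*}
\Gamma_{k,m,x}(\theta)
&=\sum_{s,t=-m+1}^{k-1}\big(\text{cov}_\theta[\phi_{\theta s},\phi_{\theta t}\mid \mathcal{G}^x_{k}]-\text{cov}_\theta[\phi_{\theta s},\phi_{\theta t}\mid \mathcal{G}^x_{k-1}]\big)\\
&\quad+\sum_{t=-m+1}^{k-1}\big(\text{cov}_\theta[\phi_{\theta k},\phi_{\theta t}\mid \mathcal{G}^x_{k}]+\text{cov}_\theta[\phi_{\theta t},\phi_{\theta k}\mid \mathcal{G}^x_{k}]\big)+\text{var}_\theta[\phi_{\theta k}\mid\mathcal{G}^x_k],
\end{align*}
and likewise for $\Gamma_{k,m}(\theta)$ with $X_{-m}=x$ integrated out. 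This reduces both parts of the lemma to controlling a single conditional covariance $\text{cov}_\theta[\phi_{\theta s},\phi_{\theta t}\mid\cdot]$ together with its sensitivity to (i) enlarging the conditioning set from $\mathcal{G}^x_{k-1}$ to $\mathcal{G}^x_{k}$, i.e.\ appending $(Y_k,W_k)$; (ii) specifying the initial condition $X_{-m}=x$ versus integrating it out; and, for part (b), (iii) shifting the initial pair $(m,x)$ to $(m',x')$.

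For a single conditional covariance I would use the conditional Markov property of $\{X_t\}$ given the observations. For $s\le t$, conditioning on $(X_s,\mathcal{G}^x_k)$ makes $\phi_{\theta t}$ independent of $X_{s-1}$, whence
\[
\text{cov}_\theta[\phi_{\theta s},\phi_{\theta t}\mid\mathcal{G}^x_k]=\mathbb{E}_\theta\!\left[\mathbb{E}_\theta[\phi_{\theta s}\mid X_s,\mathcal{G}^x_k]\big(\mathbb{E}_\theta[\phi_{\theta t}\mid X_s,\mathcal{G}^x_k]-\mathbb{E}_\theta[\phi_{\theta t}\mid\mathcal{G}^x_k]\big)'\,\middle|\,\mathcal{G}^x_k\right].
\]
The inner factor is the oscillation of $x_s\mapsto\mathbb{E}_\theta[\phi_{\theta t}\mid X_s=x_s,\mathcal{G}^x_k]$, which by the Markov property equals an integral of $\phi_{\theta t}$ against the difference of the laws of $X_{t-1}$ started from two values of $X_s$; Lemma \ref{corollary_1} bounds this difference in total variation by the forgetting product over the block between $s$ and $t$. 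Bounding $|\mathbb{E}_\theta[\phi_{\theta s}\mid X_s,\mathcal{G}^x_k]|$ and $\sup|\phi_{\theta t}|$ by the envelopes $|\phi^1_s|_\infty$ and $|\phi^1_t|_\infty$, which lie in $L^2(\mathbb{P}_{\theta^*})$ under Assumption \ref{assn_distn}(b)(c), and applying Cauchy--Schwarz gives an $L^1$ covariance bound with the forward forgetting product as the decay factor. The diagonal term $\text{var}_\theta[\phi_{\theta k}\mid\mathcal{G}^x_k]$ is bounded directly by $\mathbb{E}_\theta[|\phi_{\theta k}|^2\mid\mathcal{G}^x_k]$.

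Sensitivities (i)--(iii) I would handle with the two forgetting mechanisms already used for $\Psi$. Appending $(Y_k,W_k)$ perturbs $\text{cov}_\theta[\phi_{\theta s},\phi_{\theta t}\mid\cdot]$ only through information entering at the future time $k$, so by the time-reversed forgetting of Lemma \ref{lemma_dmr_39} the change in (i) decays like $\rho^{\,k-(s\vee t)}$; specifying $X_{-m}=x$ perturbs it only through the initial time $-m$, so by Lemma \ref{corollary_1} the change in (ii)--(iii) decays like $\rho^{\,(s\wedge t)+m}$. In the difference $\Gamma_{k,m,x}-\Gamma_{k,m}$ both a backward propagation from $k$ and a forward propagation from $-m$ must survive for a term to be non-negligible, so these two sensitivity products multiply, and together with the intrinsic $|t-s|$-decay of the covariance established above the exponents sum to a single factor geometric in $k+m$. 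Assembling the three pieces as in Lemma \ref{lemma_psi_bound}, each random forgetting product over a block of length $L$ is split into a deterministic geometric part $\rho^{\lfloor L/c\rfloor}$ and a residual absorbed into an almost-surely-eventually-bounded factor, using the strong law of large numbers for $\{\mathbb{I}\{\omega(\cdot)>\delta\}\}$ as in the discussion following Lemma \ref{lemma_lnx0}; the larger denominators $8(p+1)$ and $16(p+1)$ reflect that more forgetting products must be combined here than for $\Psi$. The double summation over $(s,t)$ contributes one power of $(k+m)$ beyond the single sum of Lemma \ref{lemma_psi_bound}, yielding the $(k+m)^3$ rate; the exceptional-event contributions define $C_{k,m}$ with $\mathbb{P}_{\theta^*}(C_{k,m}\ge 1\text{ i.o.})=0$, the boundary band $t\in[-m'+1,-m]$ present only in $\Gamma_{k,m'}$ defines $D_m$ with an $m$-free distribution, and the $L^1$ bound on products $|\phi^1_s|_\infty|\phi^1_t|_\infty$ defines $K\in L^1(\mathbb{P}_{\theta^*})$.

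The principal obstacle is the single-covariance bound: unlike the conditional expectations in Lemma \ref{lemma_psi_bound}, each covariance couples two time points and one must show that the joint conditional law of $(X_{s-1},X_s,X_{t-1},X_t)$ factorizes up to a forgetting error, while simultaneously tracking two independent random forgetting products --- one over the forward gap $|t-s|$ governing the covariance itself and one over the gap to the boundary time $k$ or $-m$ governing its sensitivity --- and keeping the product of two score envelopes integrable. Reconciling these sources of geometric decay with the single bookkeeping that yields $\rho^{\lfloor(k+m)/c\rfloor}C_{k,m}$, uniformly over the $O((k+m)^2)$ index pairs, is the delicate part.
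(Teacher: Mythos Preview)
Your decomposition of $\Gamma_{k,m,x}$ into a double covariance sum plus cross and diagonal terms, and your plan to control each covariance and its sensitivities via Lemmas \ref{corollary_1} and \ref{lemma_dmr_39}, is essentially the paper's route; the paper packages your ``single conditional covariance'' bound and the three sensitivities into Lemma \ref{lemma_ijl}(f)--(k) and then runs the same summation machinery as in Lemma \ref{lemma_psi_bound}.

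There is, however, one genuine gap. You assert that in $\Gamma_{k,m,x}-\Gamma_{k,m}$ the forward and backward forgetting factors ``multiply,'' so that ``the exponents sum to a single factor geometric in $k+m$.'' The forgetting lemmas do not deliver this. For the second-difference contribution
\[
\big(\mathrm{cov}_\theta[\phi_{\theta s},\phi_{\theta t}\mid\mathcal{G}^x_k]-\mathrm{cov}_\theta[\phi_{\theta s},\phi_{\theta t}\mid\mathcal{G}^x_{k-1}]\big)-\big(\mathrm{cov}_\theta[\phi_{\theta s},\phi_{\theta t}\mid\mathcal{G}_k]-\mathrm{cov}_\theta[\phi_{\theta s},\phi_{\theta t}\mid\mathcal{G}_{k-1}]\big)
\]
one obtains three \emph{separate} upper bounds --- proportional to $\Omega_{t-1,s}$ (each covariance is individually small), to $\Omega_{s-1,-m}$ (group as two initial-condition differences), and to $\tilde\Omega_{t,k-1}$ (group as two terminal-observation differences) --- and hence a bound by their \emph{minimum}, exactly as in (\ref{psi_diff0}) for $\Psi$. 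Nothing in your sketch or in the paper yields the product. The paper therefore has to control the double sum $\sum_{-m+1\le s\le t\le k-1}\big(\Omega_{s-1,-m}\wedge\Omega_{t-1,s}\wedge\tilde\Omega_{t,k-1}\big)$ by a triple-minimum argument: split the index set according to which exponent dominates and iterate the device of (\ref{rho_sum_bound}), arriving at $\mathcal{C}\rho^{\lfloor(k+m)/4(p+1)\rfloor}$ before absorbing the stochastic correction $\rho^{-\nu^k_{-m-p}}$. The larger denominators $8(p+1)$ and $16(p+1)$ in the statement appear precisely because this minimum-based summation halves the available exponent at each stage (and part (b) requires one more halving for the tail sums over $s\le -m$); a product bound would have rendered the double sum trivial, but it is not available. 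Two smaller points: the factor $(k+m)^3$ does not come from the double summation itself but from $\max_{s,t}|\phi_s|_\infty|\phi_t|_\infty\le (k+m)^3K$ via the DMR trick $\max_t|\phi_t|_\infty\le\sum_t|\phi_t|_\infty/(|t|\vee 1)^2\cdot(|t|\vee1)^2$; and your bookkeeping for $C_{k,m}$, $D_m$, and $K$ is otherwise in line with the paper.
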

Lemma \ref{lemma_gamma_cgce} implies that $\{\Gamma_{k,m,x}(\theta)\}_{m\geq 0}$ converges to $\Gamma_{k,\infty}(\theta)$ in probability uniformly in $x \in \mathcal{X}$ and $\theta \in G$. The following proposition is a local uniform law of large numbers for the observed Hessian.
\begin{proposition}\label{prop_hessian}
Assume Assumptions \ref{assn_a1}--\ref{assn_nabla_moment}. Then, 
\begin{align*}
\sup_{x\in \mathcal{X}} \sup_{\theta \in G}  \left| n^{-1}\nabla_{\theta}^2 l_n(\theta,x) - \mathbb{E}_{\theta^*}[\Psi_{0,\infty}^{2}(\theta) +\Gamma_{0,\infty}(\theta)] \right| \to_p 0.
\end{align*}
\end{proposition}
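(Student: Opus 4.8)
The plan is to split the observed Hessian into a stationary ``infinite-past'' approximation plus a remainder, control the remainder with the exponential bounds of Lemmas \ref{lemma_psi_bound} and \ref{lemma_gamma_cgce}, and then apply a uniform ergodic theorem to the stationary part. Writing $\nabla_\theta^2 l_n(\theta,x)=\sum_{k=1}^n[\Psi_{k,0,x}^2(\theta)+\Gamma_{k,0,x}(\theta)]$ and introducing the stationary average $S_n(\theta):=n^{-1}\sum_{k=1}^n[\Psi_{k,\infty}^2(\theta)+\Gamma_{k,\infty}(\theta)]$, where by the Louis identity (\ref{louis2}) each summand is the second derivative of the infinite-past conditional log-density, $\nabla_\theta^2\log p_\theta(Y_k|\overline{\bf Y}^{k-1}_{-\infty},{\bf W}^k_{-\infty})$, I would bound the quantity in the proposition by
\[
\sup_{x,\theta}\left| n^{-1}\nabla_\theta^2 l_n(\theta,x) - S_n(\theta)\right| + \sup_\theta\left| S_n(\theta) - \mathbb{E}_{\theta^*}[\Psi_{0,\infty}^2(\theta)+\Gamma_{0,\infty}(\theta)]\right|,
\]
and show each term is $o_p(1)$.

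For the first term I would fix $m=0$ in Lemmas \ref{lemma_psi_bound}(b) and \ref{lemma_gamma_cgce}(b) and let $m'\to\infty$. Since convergence in probability yields almost-sure convergence along a subsequence, and the right-hand bounds do not depend on $m'$, this gives, uniformly in $x\in\mathcal{X}$ and $\theta\in G$,
\[
\left|\Psi_{k,0,x}^2(\theta)-\Psi_{k,\infty}^2(\theta)\right|\le (K_2 k^2 + B_0)\rho^{\lfloor k/4(p+1)\rfloor}A_{k,0},\qquad \left|\Gamma_{k,0,x}(\theta)-\Gamma_{k,\infty}(\theta)\right|\le K(k^3+D_0)\rho^{\lfloor k/16(p+1)\rfloor}C_{k,0}.
\]
Because $A_{k,0}$ and $C_{k,0}$ exceed $1$ only finitely often $\pstar$-a.s., and $K_2,K,B_0,D_0$ are a.s.\ finite, the series $\sum_{k\ge 1}(K_2 k^2+B_0)\rho^{\lfloor k/4(p+1)\rfloor}+\sum_{k\ge 1}K(k^3+D_0)\rho^{\lfloor k/16(p+1)\rfloor}$ converges $\pstar$-a.s. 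Dividing this finite bound by $n$ sends the first term to $0$ $\pstar$-a.s., hence in probability.

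For the second term I would use that $\{\Psi_{k,\infty}^2(\theta)+\Gamma_{k,\infty}(\theta)\}_k$ is a fixed measurable functional of the stationary ergodic process $\{(Z_k,W_k)\}$, hence itself stationary and ergodic for each $\theta$, so the ergodic theorem gives $S_n(\theta)\to\mathbb{E}_{\theta^*}[\Psi_{0,\infty}^2(\theta)+\Gamma_{0,\infty}(\theta)]$ $\pstar$-a.s. To upgrade this to uniformity over the compact closure $\overline G$, I would run the standard finite-cover argument, combining (i) continuity of $\theta\mapsto\Psi_{0,\infty}^2(\theta)+\Gamma_{0,\infty}(\theta)$, inherited from the twice continuous differentiability of $q_\theta$ and $g_\theta$ in Assumption \ref{assn_distn}(a) together with the uniform-in-$\theta$ convergence of the finite-past approximations, and (ii) the integrable envelope $\mathbb{E}_{\theta^*}[\sup_{\theta\in G}|\Psi_{0,\infty}^2(\theta)+\Gamma_{0,\infty}(\theta)|]<\infty$, which follows from the $\nabla_\theta^2\log p_\theta$ bound in Assumption \ref{assn_nabla_moment}. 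Covering $\overline G$ by finitely many small balls and controlling the oscillation of $S_n$ on each ball by the modulus of continuity of the envelope then yields $\sup_\theta|S_n(\theta)-\mathbb{E}_{\theta^*}[\Psi_{0,\infty}^2(\theta)+\Gamma_{0,\infty}(\theta)]|\to 0$ $\pstar$-a.s.

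The main obstacle will be the uniform treatment of this second, ergodic term, specifically verifying the continuity and integrable envelope of the limit objects $\Psi_{0,\infty}^2$ and $\Gamma_{0,\infty}$, which are defined only as in-probability limits rather than through explicit formulas. The conditional-variance term $\Gamma_{0,\infty}$ is the delicate one, since it is quadratic in the complete-data scores $\phi_{\theta t}$, so its envelope needs the second-moment control of $\nabla_\theta\log g_\theta$ furnished by Assumption \ref{assn_distn}(c), and its continuity requires first showing that the finite-past approximations $\Gamma_{k,0,x}(\theta)$ are continuous in $\theta$ and converge uniformly on $G$, so that the limit inherits continuity. Once these two regularity facts are established, the two pieces combine to give the claimed uniform convergence in probability.
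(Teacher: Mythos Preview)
Your proposal is correct and follows essentially the same route as the paper: split $n^{-1}\nabla_\theta^2 l_n(\theta,x)$ into the stationary average $n^{-1}\sum_k[\Psi_{k,\infty}^2(\theta)+\Gamma_{k,\infty}(\theta)]$ plus a remainder, kill the remainder with Lemmas~\ref{lemma_psi_bound}(b) and~\ref{lemma_gamma_cgce}(b) at $m=0$, $m'\to\infty$, and then prove a uniform ergodic theorem for the stationary part. The paper carries out your equicontinuity step exactly as you anticipate, via the bound $\lim_{\delta\to0}\estar\sup_{|\theta'-\theta|\le\delta}|\Upsilon_{0,\infty}(\theta')-\Upsilon_{0,\infty}(\theta)|\le 2A_m+C_m$ with $A_m:=\estar\sup_{\theta\in G}|\Upsilon_{0,m,x_0}(\theta)-\Upsilon_{0,\infty}(\theta)|\to0$ by dominated convergence (uniform integrability from Assumption~\ref{assn_nabla_moment}) and $C_m=0$ for $m\ge p$ by Lemma~\ref{lemma_14_dmr}; this is precisely the ``finite-past continuity plus uniform transfer to the limit'' mechanism you outline.
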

The following proposition shows the asymptotic normality of the MLE. \begin{proposition}\label{prop_distn}
Assume Assumptions \ref{assn_a1}--\ref{assn_nabla_moment}. Then, (a) for any $x_0 \in \mathcal{X}$, $n^{-1/2}(\hat \theta_{x_0} - \theta^*) \to_d N(0,I(\theta^*)^{-1})$; (b) for any probability measure $\xi$ on $\mathcal{B}(\mathcal{X})$ for $x_0$, $n^{-1/2}(\hat \theta_{\xi} - \theta^*) \to_d N(0,I(\theta^*)^{-1})$.
\end{proposition}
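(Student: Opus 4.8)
The plan is to start from the second-order expansion (\ref{taylor}) of the first-order condition and invert it. Since $\hat\theta_{x_0}$ is consistent by Proposition \ref{prop_consistency}, for all large $n$ the intermediate point $\bar\theta$ lies in the neighborhood $G$ of Assumption \ref{assn_distn}, so $\nabla_\theta^2 l_n(\bar\theta,x_0)$ is eventually well defined and (\ref{taylor}) rearranges to
\[
n^{1/2}(\hat\theta_{x_0}-\theta^*) = -\left[ n^{-1}\nabla_\theta^2 l_n(\bar\theta,x_0)\right]^{-1} n^{-1/2}\nabla_\theta l_n(\theta^*,x_0).
\]
By Proposition \ref{prop_score}(a) the score factor satisfies $n^{-1/2}\nabla_\theta l_n(\theta^*,x_0)\to_d N(0,I(\theta^*))$. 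If I can show that the Hessian factor satisfies $n^{-1}\nabla_\theta^2 l_n(\bar\theta,x_0)\to_p -I(\theta^*)$ with $I(\theta^*)$ nonsingular, then Slutsky's theorem delivers $n^{1/2}(\hat\theta_{x_0}-\theta^*)\to_d I(\theta^*)^{-1}N(0,I(\theta^*))=N(0,I(\theta^*)^{-1})$, which is part (a).

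The Hessian step has two ingredients. First, Proposition \ref{prop_hessian} gives convergence of $n^{-1}\nabla_\theta^2 l_n(\theta,x)$ to the deterministic limit $H(\theta):=\mathbb{E}_{\theta^*}[\Psi_{0,\infty}^2(\theta)+\Gamma_{0,\infty}(\theta)]$, uniformly over $x\in\mathcal{X}$ and $\theta\in G$; combined with $\bar\theta\to_p\theta^*$ and the continuity of $H$ at $\theta^*$ (which follows from the continuity in Assumption \ref{assn_distn}(a) together with the dominated-convergence arguments already used to construct $\Psi_{0,\infty}^2$ and $\Gamma_{0,\infty}$), this yields $n^{-1}\nabla_\theta^2 l_n(\bar\theta,x_0)\to_p H(\theta^*)$. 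Second, I must identify $H(\theta^*)=-I(\theta^*)$. Letting $m\to\infty$ in the Louis decomposition (\ref{louis2}) gives $\Psi_{k,\infty}^2(\theta^*)+\Gamma_{k,\infty}(\theta^*)=\nabla_\theta^2\log p_{\theta^*}(Y_k|\overline{\bf Y}_{-\infty}^{k-1},{\bf W}_{-\infty}^k)$, so $H(\theta^*)$ is the expected Hessian of the conditional log-density at the truth. The conditional information-matrix equality (second Bartlett identity) then gives $\mathbb{E}_{\theta^*}[\nabla_\theta^2\log p_{\theta^*}(Y_0|\overline{\bf Y}_{-\infty}^{-1},{\bf W}_{-\infty}^0)]=-\mathbb{E}_{\theta^*}[\Psi_{0,\infty}^1(\theta^*)(\Psi_{0,\infty}^1(\theta^*))']=-I(\theta^*)$, where the interchange of differentiation and integration is justified by the domination conditions in Assumption \ref{assn_distn}(d)(e). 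This completes the identification and hence part (a).

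For part (b) I would repeat the expansion with $l_n(\theta,\xi)$ in place of $l_n(\theta,x_0)$, using Corollary \ref{cor_consistency_xi} for consistency and Proposition \ref{prop_score}(b) for the score. The only new element is controlling the derivatives of $l_n(\theta,\xi)=\log\int e^{l_n(\theta,x_0)}\xi(dx_0)$. Writing these as $\xi$-weighted averages, $\nabla_\theta l_n(\theta,\xi)$ is a weighted average of $\nabla_\theta l_n(\theta,x_0)$, while $\nabla_\theta^2 l_n(\theta,\xi)$ equals the weighted average of $\nabla_\theta^2 l_n(\theta,x_0)$ plus the weighted covariance of $\nabla_\theta l_n(\theta,\cdot)$ across $x_0$. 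The weighted average of the Hessians converges to the same $H(\theta)$ by the uniformity in Proposition \ref{prop_hessian}, and Lemma \ref{lemma_psi_bound}(b) implies that $\sup_{x,x'}|\nabla_\theta l_n(\theta,x)-\nabla_\theta l_n(\theta,x')|$ is dominated by a convergent sum and is thus $O_p(1)$ uniformly in $\theta\in G$; therefore the weighted-covariance term is $O_p(1)$ and vanishes after division by $n$. Hence $n^{-1}\nabla_\theta^2 l_n(\bar\theta,\xi)\to_p-I(\theta^*)$, and Slutsky's theorem gives part (b).

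The main obstacle is the Hessian step, and within it the identification $H(\theta^*)=-I(\theta^*)$: I must pass to the $m\to\infty$ limit in the Louis decomposition, justify the interchange of differentiation and integration needed for the conditional Bartlett identity, and verify the continuity of $H$ at $\theta^*$ so that evaluation at the random intermediate point $\bar\theta$ is legitimate. The extension to the $\xi$-integrated likelihood is essentially bookkeeping given Lemma \ref{lemma_psi_bound} and Proposition \ref{prop_hessian}, but it does require showing that the covariance term enters at order $O_p(1)$ rather than $O_p(n)$ so that it is asymptotically negligible.
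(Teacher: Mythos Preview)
Your outline is essentially the same as the paper's and is correct in structure: Taylor expansion~(\ref{taylor}), Proposition~\ref{prop_score} for the score, Proposition~\ref{prop_hessian} for the Hessian, continuity of $H(\theta)$ to handle the intermediate point, identification $H(\theta^*)=-I(\theta^*)$, and for part~(b) the decomposition of $n^{-1}\nabla_\theta^2 l_n(\theta,\xi)$ into a weighted average of Hessians plus a variance term that is $o_p(1)$.

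The one place where your route diverges from the paper is the identification $H(\theta^*)=-I(\theta^*)$. You propose to apply the second Bartlett identity directly to the infinite-past conditional density $p_{\theta}(Y_0\mid\overline{\bf Y}_{-\infty}^{-1},{\bf W}_{-\infty}^0)$, invoking Assumption~\ref{assn_distn}(d)(e) for the interchange. That is delicate: those domination conditions are stated for $g_\theta$ and its derivatives, not for the limiting conditional density, which is defined only as an $L^1$/in-probability limit of the finite-$m$ densities; the usual proof of the Bartlett identity needs a bona fide parametric density with a dominated derivative. The paper avoids this by applying the information matrix equality at \emph{finite} $m$, where $p_\theta(Y_1\mid\overline{\bf Y}_{-m}^0,{\bf W}_{-m}^1)$ is an ordinary parametric density and the identity $\mathbb{E}_{\theta^*}[\Psi_{0,m}^1(\theta^*)\Psi_{0,m}^1(\theta^*)']=-\mathbb{E}_{\theta^*}[\Psi_{0,m}^2(\theta^*)+\Gamma_{0,m}(\theta^*)]$ is standard, and then lets $m\to\infty$ using Lemmas~\ref{lemma_psi_bound} and~\ref{lemma_gamma_cgce}, Assumption~\ref{assn_nabla_moment}, and dominated convergence. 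This finite-$m$ detour is the clean way to close the argument; your direct approach would require an additional justification you have not supplied. For the continuity of $H$, the paper likewise does not appeal to Assumption~\ref{assn_distn}(a) directly but to the equicontinuity statement~(\ref{psi_psi_3}) established inside the proof of Proposition~\ref{prop_hessian} via Lemma~\ref{lemma_14_dmr}. Your handling of part~(b), including the $O_p(1)$ bound on $\sup_{x,x'}|\nabla_\theta l_n(\theta,x)-\nabla_\theta l_n(\theta,x')|$ from Lemma~\ref{lemma_psi_bound}, matches the paper.
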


\subsection{Convergence of the covariance matrix estimate}

When conducting statistical inferences with the MLE, the researcher needs to estimate the asymptotic covariance matrix of the MLE. Proposition \ref{prop_hessian} already derived the consistency of the observed Hessian. We derive the consistency of the outer-product-of-gradients (OPG) estimates:
\begin{align}
\hat I_{x_0}(\theta) &:= n^{-1}\sum_{k=1}^n \nabla_\theta \log p_{\theta}(Y_k|\overline{\bf Y}^{k-1}_{0}, {\bf W}^{k}_{0}, x_0)(\nabla_\theta \log p_{\theta}(Y_k|\overline{\bf Y}^{k-1}_{0}, {\bf W}^{k}_{0}, x_0))', \label{I-x0}\\
\hat I_{\xi}(\theta) &:= n^{-1}\sum_{k=1}^n \nabla_{\theta} \log p_{\theta \xi}(Y_k|\overline{\bf Y}^{k-1}_{0}, {\bf W}^{k}_{0})(\nabla_\theta \log p_{\theta \xi}(Y_k|\overline{\bf Y}^{k-1}_{0}, {\bf W}^{k}_{0}))', \label{I-xi}
\end{align}
where $\nabla_\theta\log p_{\theta \xi}(Y_k|\overline{\bf Y}^{k-1}_{0}, {\bf W}^{k}_{0}):= \nabla_\theta \log \int p_{\theta}(Y_k|\overline{\bf Y}^{k-1}_{0}, {\bf W}^{k}_{0}, x_0) \xi(dx_0)$. In applications, \\$\nabla_\theta \log p_{\theta}(Y_k|\overline{\bf Y}^{k-1}_{0}, {\bf W}^{k}_{0}, x_0) $ can be computed by numerically differentiating $\log p_{\theta}(Y_k|\overline{\bf Y}^{k-1}_{0}, {\bf W}^{k}_{0}, x_0)$, which in turn can be computed by using the recursive algorithm of \citet{hamilton96joe}.

The following proposition shows the consistency of the OPG estimate. Its proof is similar to that of Proposition \ref{prop_hessian} and hence omitted.
\begin{proposition}\label{prop_opg}
Assume Assumptions \ref{assn_a1}--\ref{assn_nabla_moment}. Then, $\sup_{x_0 \in \mathcal{X}}| \hat I_{x_0}(\hat\theta) -I(\theta^*) | \to_p 0$ and $\hat I_\xi(\hat\theta) \to_p I(\theta^*)$ for any $\hat\theta$ such that $\hat\theta \to_p \theta^*$ and any $\xi$.
\end{proposition}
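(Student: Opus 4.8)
The plan is to follow the proof of Proposition~\ref{prop_hessian}, replacing the Hessian summands $\Psi^2_{k,0,x_0}(\theta)+\Gamma_{k,0,x_0}(\theta)$ by the outer product $\Psi^1_{k,0,x_0}(\theta)(\Psi^1_{k,0,x_0}(\theta))'$. Since $\nabla_\theta\log p_\theta(Y_k|\overline{\bf Y}^{k-1}_0,{\bf W}^k_0,x_0)=\Psi^1_{k,0,x_0}(\theta)$, we have $\hat I_{x_0}(\theta)=n^{-1}\sum_{k=1}^n\Psi^1_{k,0,x_0}(\theta)(\Psi^1_{k,0,x_0}(\theta))'$, and it suffices to prove the uniform convergence
\[
\sup_{x_0\in\mathcal{X}}\sup_{\theta\in G}\bigl|\hat I_{x_0}(\theta)-\mathcal{I}(\theta)\bigr|\to_p 0,\qquad \mathcal{I}(\theta):=\mathbb{E}_{\theta^*}\bigl[\Psi^1_{0,\infty}(\theta)(\Psi^1_{0,\infty}(\theta))'\bigr].
\]
Indeed, $\mathcal{I}$ is continuous on $G$ with $\mathcal{I}(\theta^*)=I(\theta^*)$, so this uniform statement together with $\hat\theta\to_p\theta^*$ yields $\sup_{x_0}|\hat I_{x_0}(\hat\theta)-I(\theta^*)|\to_p0$.

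First I would replace the finite-past score $\Psi^1_{k,0,x_0}(\theta)$ by its stationary limit $\Psi^1_{k,\infty}(\theta)=\nabla_\theta\log p_\theta(Y_k|\overline{\bf Y}^{k-1}_{-\infty},{\bf W}^k_{-\infty})$. Using $aa'-bb'=(a-b)a'+b(a-b)'$, hence $|aa'-bb'|\leq|a-b|(|a|+|b|)$, the replacement error is bounded by
\[
n^{-1}\sum_{k=1}^n\bigl|\Psi^1_{k,0,x_0}(\theta)-\Psi^1_{k,\infty}(\theta)\bigr|\bigl(|\Psi^1_{k,0,x_0}(\theta)|+|\Psi^1_{k,\infty}(\theta)|\bigr).
\]
Lemma~\ref{lemma_psi_bound}(b) with $m=0$, $m'\to\infty$, $j=1$ bounds the first factor, uniformly in $x_0$ and $\theta\in G$, by $[K_1k^2+B_0]\rho^{\lfloor k/4(p+1)\rfloor}A_{k,0}$, while the second factor is dominated by the stationary envelope $\bar\Psi_k:=\sup_{m\geq0}\sup_{\theta\in G}\sup_{x\in\mathcal{X}}|\Psi^1_{k,m,x}(\theta)|$, which satisfies $\mathbb{E}_{\theta^*}[\bar\Psi_0^2]<\infty$ by Assumption~\ref{assn_nabla_moment}. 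Because $K_1\in L^2(\mathbb{P}_{\theta^*})$ and $B_0<\infty$ $\mathbb{P}_{\theta^*}$-a.s., $A_{k,0}<1$ for all large $k$ a.s., and $\mathbb{E}_{\theta^*}[\sum_{k\geq1}k^2\rho^{\lfloor k/4(p+1)\rfloor}\bar\Psi_k]=\mathbb{E}_{\theta^*}[\bar\Psi_0]\sum_{k\geq1}k^2\rho^{\lfloor k/4(p+1)\rfloor}<\infty$ by stationarity, the weighted series $\sum_{k\geq1}[K_1k^2+B_0]\rho^{\lfloor k/4(p+1)\rfloor}A_{k,0}\bar\Psi_k$ converges $\mathbb{P}_{\theta^*}$-a.s., so its $n^{-1}$-average vanishes uniformly in $x_0$ and $\theta$.

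Next I would prove a uniform law of large numbers for the stationary part $n^{-1}\sum_{k=1}^n\Psi^1_{k,\infty}(\theta)(\Psi^1_{k,\infty}(\theta))'$. The sequence $\{\Psi^1_{k,\infty}(\theta)\}_k$ is stationary and ergodic, $\theta\mapsto\Psi^1_{0,\infty}(\theta)$ is continuous (being the uniform-in-$\theta$ limit of the continuous finite-past scores, by Lemma~\ref{lemma_psi_bound} and Assumption~\ref{assn_distn}), and the envelope $\sup_{\theta\in G}|\Psi^1_{0,\infty}(\theta)|^2\leq\bar\Psi_0^2$ is integrable; hence a uniform ergodic theorem gives $\sup_{\theta\in G}|n^{-1}\sum_{k=1}^n\Psi^1_{k,\infty}(\theta)(\Psi^1_{k,\infty}(\theta))'-\mathcal{I}(\theta)|\to0$ a.s. Combining the two steps establishes the displayed uniform convergence, and the continuity/consistency argument above completes the proof for $\hat I_{x_0}$. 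The estimate $\hat I_\xi$ is handled identically: as in Proposition~\ref{prop_score}(b), $\nabla_\theta\log p_{\theta\xi}(Y_k|\overline{\bf Y}^{k-1}_0,{\bf W}^k_0)$ is also approximated by $\Psi^1_{k,\infty}(\theta)$ because the dependence on the initial mixing measure $\xi$ is forgotten at the rate of Lemmas~\ref{corollary_1} and~\ref{lemma_psi_bound}, so the same two steps give $\hat I_\xi(\hat\theta)\to_p I(\theta^*)$.

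The main obstacle is that, unlike the Hessian---where Lemmas~\ref{lemma_psi_bound} and~\ref{lemma_gamma_cgce} bound the additive summands $\Psi^2$ and $\Gamma$ directly---the OPG summand is quadratic in the score, so the finite-past replacement error involves a \emph{product} of a geometrically small factor and an only $L^2$-bounded magnitude factor. The crux is therefore the estimate $n^{-1}\sum_{k=1}^n|\Psi^1_{k,0,x_0}(\theta)-\Psi^1_{k,\infty}(\theta)|\,\bar\Psi_k\to0$, which succeeds precisely because the geometric decay in Lemma~\ref{lemma_psi_bound}(b) dominates both the polynomial prefactor $k^2$ and the integrable envelope $\bar\Psi_k$ from Assumption~\ref{assn_nabla_moment}, rendering the relevant random series summable $\mathbb{P}_{\theta^*}$-a.s.
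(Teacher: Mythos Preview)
Your proposal is correct and follows precisely the approach the paper intends: the paper states that the proof ``is similar to that of Proposition~\ref{prop_hessian} and hence omitted,'' and you carry out exactly that adaptation, replacing the additive Hessian summands $\Psi^2_{k,0,x_0}+\Gamma_{k,0,x_0}$ by the quadratic form $\Psi^1_{k,0,x_0}(\Psi^1_{k,0,x_0})'$ and controlling the extra cross term via $|aa'-bb'|\leq|a-b|(|a|+|b|)$ together with Lemma~\ref{lemma_psi_bound} and the $L^2$ envelope from Assumption~\ref{assn_nabla_moment}. The only cosmetic point is that, as in the proof of Proposition~\ref{prop_hessian}, the finitely many indices $k$ for which $A_{k,0}\geq 1$ should be handled by the crude bound $4\bar\Psi_k^2\in L^1(\mathbb{P}_{\theta^*})$ rather than by the Lemma~\ref{lemma_psi_bound} bound; and for $\hat I_\xi$ you may make explicit that $\nabla_\theta\log p_{\theta\xi}(Y_k|\overline{\bf Y}^{k-1}_0,{\bf W}^k_0)$ is a convex combination (in $x_0$) of $\Psi^1_{k,0,x_0}(\theta)$, so it inherits the uniform-in-$x_0$ approximation by $\Psi^1_{k,\infty}(\theta)$.
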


\section{Simulation}
As an illustration, we provide a small simulation study based on Hamilton's model (\ref{hamilton}) and the MS-CD model (\ref{MSCD}) with the Weibull distribution. The simulation was conducted with an R package we developed for Markov regime switching models.\footnote{The R package is available at \url{https://github.com/chiyahn/rMSWITCH}.}

\subsection{Hamilton's model}
We generate 1000 data sets of sample sizes $n=200, 400$, and $800$ from model (\ref{hamilton}) with $p=5$, using the parameter value taken from Table I of \citet{hamilton89em} with $\theta=(\mu_1,\mu_2,\gamma_1,\gamma_2,\gamma_3,\gamma_4,\sigma,p_{11},p_{22})'=(1.522,-0.3577,0.014,-0.058,-0.247,-0.213,0.7690,0.9049,0.7550)'$.\footnote{We simulate $(800 +n)$ periods and use the last $n$ observations as our sample, so that the initial value for our data set is approximately drawn from the stationary distribution.} For each data, we estimate the parameter $\theta$ together with the  initial distribution of $X_0$, $\xi$. Panel A of Table \ref{table1} reports the frequency at which the 95 percent confidence interval constructed from  (\ref{I-xi}) contains the true parameter value. The asymptotic 95 percent confidence intervals slightly undercover the true parameter at $n=200$ but the actual coverage probability approaches 95 percent as the sample size increases from $n=200$ to $400$, and then to $800$. Panel B of Table \ref{table1} presents the coverage probabilities when we use the estimator (\ref{I-x0}) by setting $x_0=2$ rather than (\ref{I-xi}). Consistent with our theoretical derivation, the results in Panel B of Table \ref{table1} are similar to those in Panel A of Table \ref{table1}, suggesting that the choice of the initial value of $x_0$ in constructing the covariance matrix estimate does not affect the coverage probabilities.

\begin{table}[h]\caption{Coverage probability of the asymptotic 95\% confidence intervals for Hamilton's model} \label{table1} \medskip
\centering
\small{
\begin{tabular}{c|ccccccccc}
\hline \hline 
\multicolumn{10}{c}{Panel A: 95\% confidence intervals constructed from  (\ref{I-xi})  }\\ \hline
& $p_{11}$ & $p_{21}$ & $\beta_1$ & $\beta_2$ & $\beta_3$ & $\beta_4$ & $\mu_1$ & $\mu_2$ & $\sigma$\\ \hline
$n=200$& 0.916 & 0.911 & 0.938 & 0.926 & 0.944 & 0.925 & 0.916 & 0.896 & 0.875\\
$n=400$&0.938 & 0.933 & 0.930 & 0.944 & 0.943 & 0.937 & 0.946 & 0.929 & 0.922\\
$n=800$&0.942 & 0.942 & 0.945 & 0.941 & 0.950 & 0.956 & 0.939 & 0.941 & 0.930\\ \hline\hline
\multicolumn{10}{c}{Panel B: 95\% confidence intervals constructed from   the OPG estimator}\\ \hline
& $p_{11}$ & $p_{21}$ & $\beta_1$ & $\beta_2$ & $\beta_3$ & $\beta_4$ & $\mu_1$ & $\mu_2$ & $\sigma$\\ \hline
$n=200$&0.915&0.920&0.938&0.927&0.941&0.934&0.922&0.901&0.884\\
$n=400$&0.932&0.932&0.938&0.949&0.942&0.939&0.945&0.929&0.923\\
$n=800$&0.943&0.945&0.945&0.939&0.949&0.956&0.936&0.937&0.929\\
 \hline 
\end{tabular}
} \\
\begin{flushleft}
Notes: Based on 1000 replications. Each entry in Panel A reports the frequency at which the asymptotic 95\% confidence interval constructed from (\ref{I-xi})  contains the true parameter value. Panel B reports the case of the asymptotic 95\% confidence interval constructed from (\ref{I-x0}) with $x_0=2$.
\end{flushleft}
\end{table}
  
\subsection{MS-CD model} 
 
We generate 1000 data sets of sample sizes $n=200, 400$, and $800$ from the MS-CD model (\ref{MSCD}), using the parameter value $\theta=  (\mu_1,\mu_2,\beta, \gamma, p_{11},p_{22},\Pr(X_0=1))' =(0.5, 1.2, 0.05, 0.95, 0.95, 0.95, 0.5)$, and examine the coverage probabilities of the asymptotic 95 percent confidence intervals. Panel A of Table \ref{table2} presents the coverage probabilities based on (\ref{I-xi}) and Panel B presents those based on (\ref{I-x0}) by setting $x_0=2$. The coverage probability improves as the sample size increases from $n=200$ to $n=800$ in both Panel A and Panel B. The results in Panels A and B are similar, indicating that the choice of the initial value of $x_0$ does not affect the confidence intervals.

\begin{table}[h]\caption{Coverage probability of the asymptotic 95\% confidence intervals for the MS-CD model } \label{table2} \medskip
\centering
\small{
\begin{tabular}{c|cccccc}
\hline \hline 
\multicolumn{7}{c}{Panel A: 95\% confidence intervals   from   (\ref{I-xi})  }\\ \hline
&$p_{11}$&$p_{22}$ & $\mu_1$ & $\mu_2$ & $\beta$ &$\gamma$\\ \hline
$n=200$&0.713&0.759&0.757&0.876&0.928&0.910  \\
$n=400$&0.827&0.844&0.861&0.902&0.956&0.919 \\
$n=800$&0.892&0.909&0.932&0.949&0.987&0.952 \\ \hline\hline
\multicolumn{7}{c}{Panel B: 95\% confidence intervals  from  (\ref{I-x0})}\\ \hline
&$p_{11}$&$p_{22}$ & $\mu_1$ & $\mu_2$ & $\beta$ &$\gamma$\\ \hline
$n=200$&0.713&0.759&0.757&0.876&0.928&0.910 \\
$n=400$&0.835&0.846&0.862&0.903&0.957&0.919 \\
$n=800$&0.894&0.910&0.932&0.950&0.987&0.950 \\
 \hline 
\end{tabular}
} \\
\begin{flushleft}
Notes: Based on 1000 replications. Each entry in Panel A reports the frequency at which the asymptotic 95\% confidence interval constructed from (\ref{I-xi})  contains the true parameter value. Panel B reports the case of the asymptotic 95\% confidence interval constructed from (\ref{I-x0}) with $x_0=2$.
\end{flushleft}
\end{table}

\section{Empirical application: Duration between stock price changes}
We estimate the MS-CD model (\ref{MSCD}) by using duration data taken from \citet{DeLucaGallo04snde} on the FIAT stock traded on the Milan Stock Exchange between May 2, 2000 and May 15, 2000, where the duration is defined as the time between every price change. We use their ``adjusted durations,'' which remove the daily seasonal component as well as exclude overnight durations between the first price change of a day and the last price change of the previous day. See \citet{DeLucaGallo04snde} for more details on the construction of their adjusted durations. 

We estimate the MS-CD model for $M=2$ and $3$. The regimes are ordered from the smallest to the largest in terms of the estimated values of $\mu_{X_k}$. For the model with $M=3$, we restrict some elements of the transition probability matrix so that $\Pr(X_k=3| X_{k-1}=1)=\Pr(X_k=1| X_{k-1}=3)=0$.\footnote{When the model is estimated without restricting the transition probabilities, the estimated transition probabilities between the first and third regimes are close to zero.} 

Table \ref{table3} reports the parameter estimates and their standard errors constructed from (\ref{I-xi}) for models with $M=2$ and $3$. For both $M=2$ and $3$, the estimated values of $\mu_{X_k}$ are well separated across regimes given the relatively small standard errors. The estimated values of $\gamma$ are $0.987$ and $1.005$ for the models with $M=2$ and $3$, respectively, providing some evidence that the density function is unbounded for the model with $M=2$.


The upper panel of Figure \ref{fig} shows the posterior probabilities of being in each regime for the model with $M=2$ for the first 3000 observations, where the solid red line represents the ``more frequent price changes'' regime (Regime 1), while the dotted blue line represents the ``less frequent price changes'' regime (Regime 2). Reflecting the high persistence of latent regimes, the posterior probabilities of being in each regime are either close to zero or one continuously over a prolonged period; the FIAT stock is in Regime 2 from the 1200th to 1900th observations and then switches to Regime 1 until the 2700th observation. As reported in the lower panel of Figure \ref{fig}, when the number of regimes is specified as $M=3$, the FIAT stock is the least frequently traded (Regime 3) from the 1200th to 1800th observations and most frequently traded (Regime 1) from the 1900th to 2100th observations as well as from the 2200th to 2500th observations.


\begin{table}[h]\caption{Estimates of the MS-CD model for the FIAT stock duration} \label{table3} \medskip
\centering
\small{
\begin{tabular}{c|cc|cc }
\hline \hline  
&
\multicolumn{2}{c|}{$M=2$}&
\multicolumn{2}{c }{$M=3$} \\
&Estimate & S.D. &Estimate & S.D. \\  \hline
$\mu_1$&0.483&0.013&0.359&0.009\\
$\mu_2$&1.138&0.021&0.717&0.043\\
$\mu_3$&&&1.290&0.086\\
$\beta$&0.053&0.012&0.032&0.017\\
$\gamma$&0.987&0.009&1.005&0.011\\
$p_{11}$&0.991&0.002&0.991&0.009\\
$p_{21}$&&&0.003&0.004\\
$p_{22}$&0.996&0.001&0.984&0.008\\
$p_{33}$&&&0.991&0.008 
 \\ \hline
Log-likelihood&\multicolumn{2}{c|}{-7037.90}&\multicolumn{2}{c }{-6972.75}  \\
 \hline 
\end{tabular}
} \\
\begin{flushleft}
Notes: The asymptotic standard error is constructed from (\ref{I-xi}). For the model with $M= 3$, we set $p_{13}=p_{31}=0$. \end{flushleft}
\end{table}

 \begin{figure}[tb] %
	\caption{The posterior probabilities of each regime: FIAT stock duration }\bigskip
	\centering
   \begin{minipage}{.9\linewidth}
   \centering 
   \bigskip
   \textbf{M=2}\\ \smallskip
   \includegraphics[width=480pt]{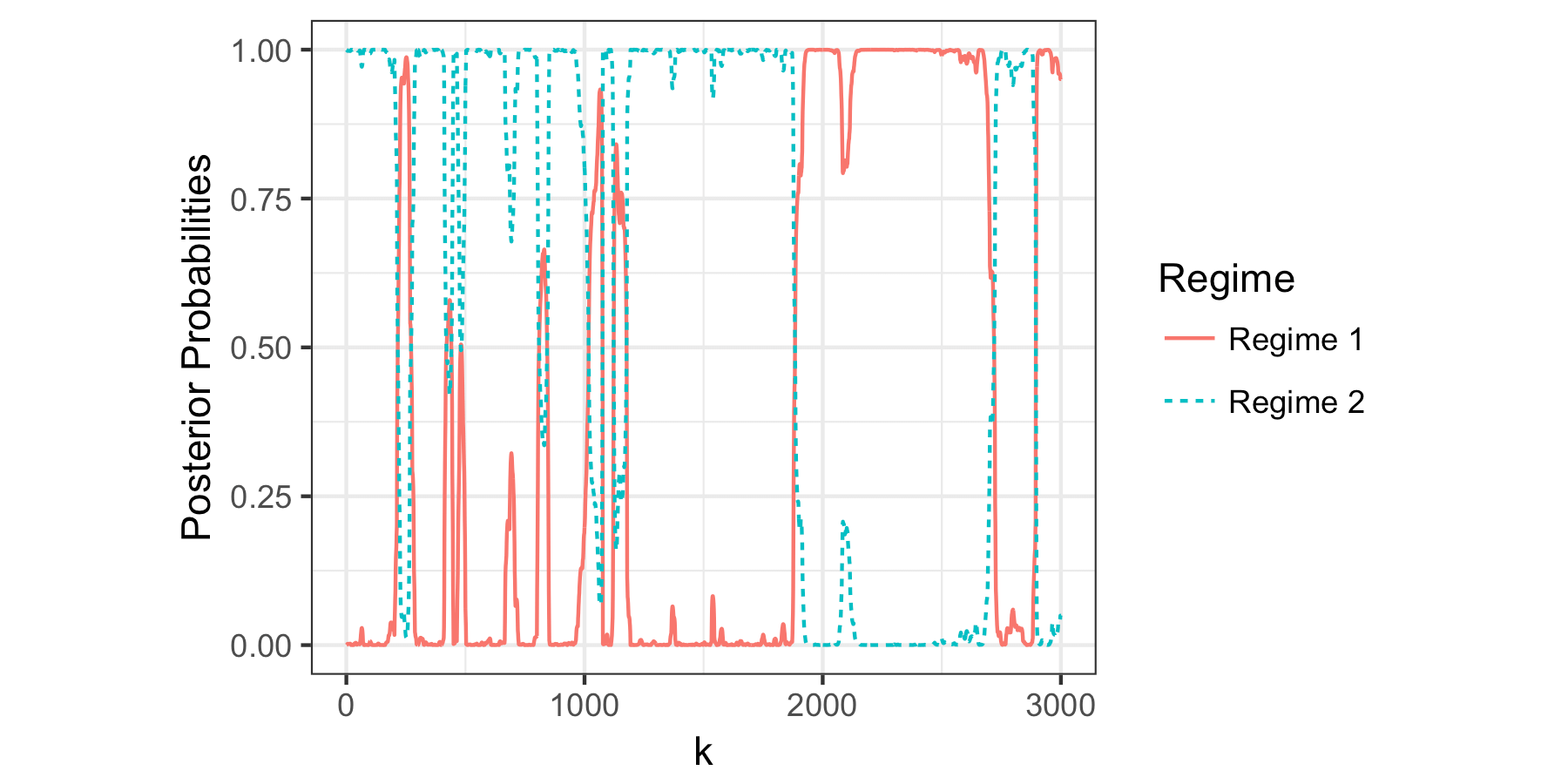}
   \end{minipage} \\ \bigskip \bigskip
   \begin{minipage}{.9\linewidth}
   \centering
   \medskip
   \textbf{M=3}\\ \smallskip
   \includegraphics[width=480pt]{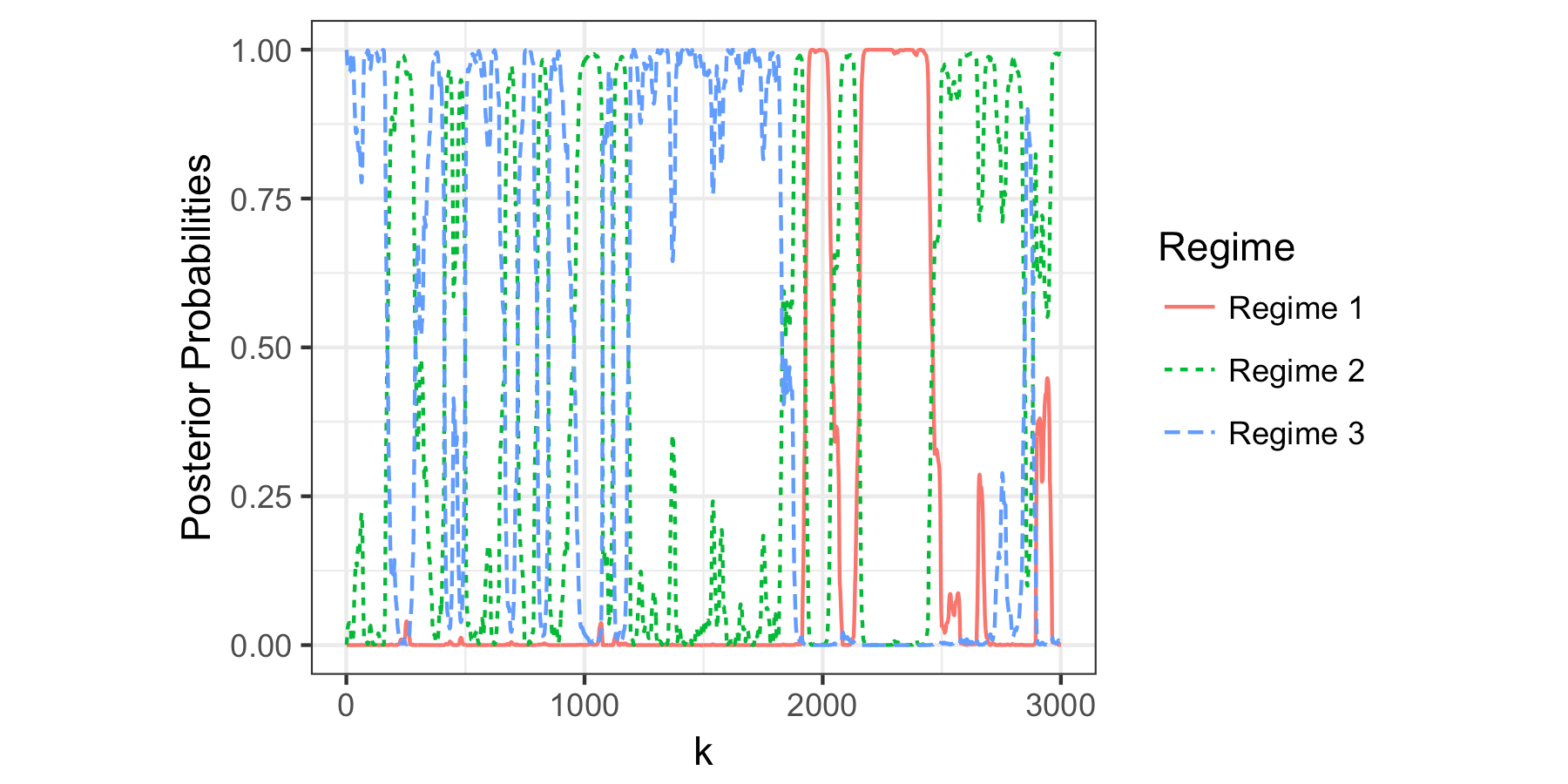}
   \end{minipage} 
\label{fig}
 \end{figure}
 
 \clearpage

\begin{appendices}

\section{Proofs}

Throughout these proofs, define $\overline{\bf V}_{b}^a:=(\overline{\bf{Y}}_{b}^a,{\bf W}_{b}^a)$.

\begin{proof}[Proof of Lemma \ref{corollary_1}] 
This lemma is an immediate consequence of Lemmas \ref{lemma_dmr_1} and \ref{lemma_coupling} when $-m+p \leq k \leq n$. When $k < - m + p$, this lemma holds because $\|\mu_1 - \mu_2\|_{TV} \leq 1$ for any probability measures $\mu_1$ and $\mu_2$.
\end{proof}

\begin{proof}[Proof of Lemma \ref{lemma_lnx0}] 
In view of (\ref{l_n_x0}), the stated result holds if there exist constants $\rho \in (0,1)$ and $M < \infty$ and a random sequence $\{b_k\}$ with $\mathbb{P}_{\theta^*}(b_{k} \geq M \text{ i.o.})=0$ such that, for $k=1,\ldots,n$, 
\begin{equation} \label{diff_bound}
 \sup_{x_0 \in \mathcal{X}} \sup_{\theta \in \Theta} \left| \log p_{\theta}(Y_k| \overline{\bf Y}_0^{k-1},{\bf W}_{0}^k,x_0) - \log {p}_{\theta}(Y_k| \overline{\bf Y}_0^{k-1},{\bf W}_{0}^k) \right| \leq \min\left\{\frac{b_+ (\overline{\bf Y}_{k-1}^k,W_k)} { b_-(\overline{\bf Y}_{k-1}^k,W_k)}, \rho^{\lfloor k/3p\rfloor } b_{k}\right\},
\end{equation}
because $b_+ (\overline{\bf Y}_{k-1}^k,W_k) / b_-(\overline{\bf Y}_{k-1}^k,W_k)< \infty$ $\mathbb{P}_{\theta^*}$-a.s.\ from Assumption  \ref{assn_gbound}.  

First, it follows from $p_{\theta}(Y_k| \overline{\bf Y}_{0}^{k-1},{\bf W}_{0}^k,x_{0})= \int g_{\theta}(Y_k| \overline{\bf Y}_{k-1},x_{k},W_k) \mathbb{P}_\theta (dx_k|x_{0}, \overline{\bf Y}^{k-1}_{0}, {\bf W}_{0}^{k})$, \\$p_{\theta}(Y_k| \overline{\bf Y}_{0}^{k-1},{\bf W}_{0}^k)= \int g_{\theta}(Y_k| \overline{\bf Y}_{k-1},x_{k},W_k) \mathbb{P}_\theta (dx_k|\overline{\bf Y}^{k-1}_{0}, {\bf W}_{0}^{k})$, and Assumption \ref{assn_gbound}(a) that \\ $p_{\theta}(Y_k| \overline{\bf Y}_0^{k-1},{\bf W}_{0}^k,x_0), p_{\theta}(Y_k| \overline{\bf Y}_0^{k-1},{\bf W}_{0}^k) \in [b_-(\overline{\bf Y}_{k-1}^k,W_k), b_+(\overline{\bf Y}_{k-1}^k,W_k)]$ uniformly in $\theta \in \Theta$ and $x_0 \in \mathcal{X}$. Hence, from the inequality $|\log x - \log y|\leq |x-y|/(x \wedge y)$, we have, for $k=1,\ldots,n$, 
\begin{equation} \label{pp_bound_1}
 \sup_{x_0 \in \mathcal{X}} \sup_{\theta \in \Theta}| \log p_{\theta}(Y_k| \overline{\bf Y}_0^{k-1},{\bf W}_{0}^k,x_0) - \log {p}_{\theta}(Y_k| \overline{\bf Y}_0^{k-1},{\bf W}_{0}^k) | \leq b_+(\overline{\bf Y}_{k-1}^k,W_k) / b_-(\overline{\bf Y}_{k-1}^k,W_k).
\end{equation} 
This gives the first bound in (\ref{diff_bound}).

We proceed to derive the second bound in (\ref{diff_bound}). Using a derivation similar to (\ref{x_a}) and noting that $X_k$ is independent of $W_k$ given $X_{k-1}$ gives, for any $-m +p \leq k \leq n$, 
\begin{equation} \label{yw_lag}
\mathbb{P}_{\theta}(X_{k} \in \cdot |X_{k-p},\overline{\bf Y}_{-m}^{k-1},{\bf W}_{-m}^{k}) = \mathbb{P}_{\theta}(X_{k} \in \cdot |X_{k-p},\overline{\bf Y}_{k-p}^{k-1},{\bf W}_{k-p}^{k-1}).
\end{equation}
Consequently, for any $-m +p \leq k \leq n$,
\begin{align}
& p_{\theta}(Y_k| \overline{\bf Y}_{-m}^{k-1},{\bf W}_{-m}^k,x_{-m}) \nonumber \\
& = \int\int g_{\theta}(Y_k| \overline{\bf Y}_{k-1},x_{k},W_k) p_\theta (x_k|x_{k-p}, \overline{\bf Y}^{k-1}_{k-p}, {\bf W}_{k-p}^{k-1})\mathbb{P}_{\theta}(d x_{k-p}|x_{-m},\overline{\bf Y}_{-m}^{k-1},{\bf W}_{-m}^{k-1})\mu(d x_{k}), \label{p_kx0} \\
& p_{\theta}(Y_k| \overline{\bf Y}_{-m}^{k-1},{\bf W}_{-m}^k) \nonumber \\
& = \int\int g_{\theta}(Y_k| \overline{\bf Y}_{k-1},x_{k},W_k) p_\theta (x_k|x_{k-p}, \overline{\bf Y}^{k-1}_{k-p}, {\bf W}_{k-p}^{k-1}) \mathbb{P}_{\theta}(d x_{k-p}|\overline{\bf Y}_{-m}^{k-1},{\bf W}_{-m}^{k-1})\mu(d x_{k}).\label{p_k}
\end{align}
Furthermore,
\begin{equation} \label{p_x_yw}
\mathbb{P}_{\theta}(X_{k-p} \in \cdot |\overline{\bf Y}_{-m}^{k-1},{\bf W}_{-m}^{k-1}) = \int \mathbb{P}_{\theta}(X_{k-p} \in \cdot|x_{-m},\overline{\bf Y}_{-m}^{k-1},{\bf W}_{-m}^{k-1}) \mathbb{P}_{\theta}(dx_{-m}|\overline{\bf Y}_{-m}^{k-1},{\bf W}_{-m}^{k-1}).
\end{equation}
Combining (\ref{p_kx0}), (\ref{p_k}), and (\ref{p_x_yw}) for $m=0$ and applying Lemma \ref{corollary_1} and the property of the total variation distance gives that, for any $p \leq k \leq n$ and uniformly in $x_0 \in \mathcal{X}$,
\begin{align}
& \left| p_{\theta}(Y_k| \overline{\bf Y}_0^{k-1},{\bf W}_{0}^k,x_0) - p_{\theta}(Y_k| \overline{\bf Y}_0^{k-1},{\bf W}_{0}^k) \right| \nonumber \\
& \leq \left| \int \int g_{\theta}(Y_k| \overline{\bf Y}_{k-1},x_{k}, W_k) p_\theta (x_k|x_{k-p}, \overline{\bf Y}^{k-1}_{k-p}, {\bf W}_{k-p}^{k-1}) \mu(d x_{k}) \right. \nonumber \\
& \quad \times \left. \left( \mathbb{P}_{\theta}(dx_{k-p}|x_0,\overline{\bf Y}_0^{k-1},{\bf W}_{0}^{k-1}) - \mathbb{P}_{\theta}(dx_{k-p}|\overline{\bf Y}_0^{k-1},{\bf W}_{0}^{k-1}) \right) \right| \nonumber \\
& \leq \prod_{i=1}^{\lfloor (k-p)/p \rfloor} \left( 1-\omega(\overline{\bf V}_{pi-p}^{pi-1}) \right) \sup_{x_{k-p}} \int g_{\theta}(Y_k| \overline{\bf Y}_{k-1},x_{k}, W_k) p_\theta (x_k|x_{k-p}, \overline{\bf Y}^{k-1}_{k-p}, {\bf W}_{k-p}^{k-1}) \mu(d x_{k}) \nonumber \\
& \leq \prod_{i=1}^{\lfloor (k-p)/p \rfloor} \left( 1-\omega(\overline{\bf V}_{pi-p}^{pi-1}) \right) \sup_{x_k',x_{k-p}\in \mathcal{X}} p_\theta (x_k'|x_{k-p}, \overline{\bf Y}^{k-1}_{k-p}, {\bf W}_{k-p}^{k-1}) \int g_{\theta}(Y_k| \overline{\bf Y}_{k-1},x_{k}, W_k) \mu(dx_{k}). \label{rho_power}
\end{align}
Furthermore, (\ref{p_kx0}) and (\ref{p_k}) imply that, for any $k \geq p$, $( p_{\theta}(Y_k| \overline{\bf Y}_0^{k-1},{\bf W}_{0}^k,x_0) \wedge p_{\theta}(Y_k| \overline{\bf Y}_0^{k-1},{\bf W}_{0}^k) ) \\ \geq \inf_{x_k',x_{k-p}\in \mathcal{X}} p_\theta (x_k'|x_{k-p}, \overline{\bf Y}^{k-1}_{k-p}, {\bf W}_{k-p}^{k-1}) \int g_{\theta}(Y_k| \overline{\bf Y}_{k-1},x_{k}, W_k) \mu(dx_{k})$. Therefore, it follows from $|\log x - \log y|\leq |x-y|/(x \wedge y)$, (\ref{rho_power}), and (\ref{p_p_bound}) and the subsequent argument that, for $p \leq k \leq n$,
\begin{equation} \label{pp_bound_2}
\sup_{x_0 \in \mathcal{X}}\sup_{\theta \in \Theta} \left| \log p_{\theta}(Y_k| \overline{\bf Y}_0^{k-1},{\bf W}_{0}^k,x_0) - \log {p}_{\theta}(Y_k| \overline{\bf Y}_0^{k-1},{\bf W}_{0}^k) \right| \leq \frac{\prod_{i=1}^{\lfloor (k-p)/p \rfloor} \left( 1-\omega(\overline{\bf V}_{pi-p}^{pi-1}) \right)}{ \omega(\overline{\bf V}_{k-p}^{k-1})}.
\end{equation}
We first bound $\prod_{i=1}^{\lfloor (k-p)/p \rfloor} ( 1-\omega(\overline{\bf V}_{pi-p}^{pi-1}) )$ on the right hand side of (\ref{pp_bound_2}). Fix $\epsilon \in (0, 1/8]$. Because $\omega(\overline{\bf V}_{t-p}^{t-1})>0 $ for all $\overline{\bf V}_{t-p}^{t-1} \in \mathcal{Y}^{p+s-1}\times \mathcal{W}^p$ from Assumption \ref{assn_a3} (note that $\omega(\overline{\bf V}_{t-p}^{t-1})=\sigma_-/ \sigma_+>0$ when $p=1$), there exists $\rho \in (0,1)$ such that $\mathbb{P}_{\theta^*}(1- \omega(\overline{\bf V}_{t-p}^{t-1}) \geq \rho) \leq \epsilon$. Define $I_{i}:=\mathbb{I}\{1-\omega(\overline{\bf V}_{pi-p}^{pi-1}) \geq \rho\}$; then, we have $\mathbb{E}_{\theta^*}[I_{i}] \leq \epsilon$ and $1-\omega(\overline{\bf V}_{pi-p}^{pi-1}) \leq \rho^{1-I_i}$. Consequently, with $a_{k}:= \rho^{- \sum_{i=1}^{\lfloor(k-p)/p\rfloor} I_{i} }$, 
\begin{equation} \label{omega_prod}
\prod_{i=1}^{\lfloor (k-p)/p \rfloor} \left( 1-\omega(\overline{\bf V}_{pi-p}^{pi-1}) \right) \leq \rho^{\lfloor(k-p)/p\rfloor - \sum_{i=1}^{\lfloor(k-p)/p\rfloor} I_{i}} = \rho^{\lfloor(k-p)/p\rfloor } a_{k}. 
\end{equation}
Because $\overline{\bf V}^{t-1}_{t-p}$ is stationary and ergodic, it follows from the strong law of large numbers that $(\lfloor (k-p)/p\rfloor)^{-1} \sum_{i=1}^{\lfloor(k-p)/p\rfloor} I_{i} \to \mathbb{E}_{\theta^*}[I_{i}] \leq \epsilon$ $\mathbb{P}_{\theta^*}$-a.s.\ as $k \to \infty$. Therefore, $a_k$ is bounded as
\begin{equation} \label{omega_high}
\mathbb{P}_{\theta^*}(a_{k} \geq \rho^{-2\epsilon\lfloor (k-p)/p\rfloor} \text{ i.o.})=0. 
\end{equation}
We then bound $1/\omega(\overline{\bf V}_{k-p}^{k-1})$ on the right hand side of (\ref{pp_bound_2}). First, we consider the case $p \geq 2$.  
Define $b_-^+(\overline{\bf Y}^i_{i-1},W_i):=b_+(\overline{\bf Y}^i_{i-1},W_i)/ b_-(\overline{\bf Y}^i_{i-1},W_i)$ and $C_3 := (\sigma_-/\sigma_+)C_1^{-2(p-1)}$ with $C_1$ defined in Assumption \ref{assn_gbound2}. It follows from the definition of $\omega(\cdot)$ that $\omega(\overline{\bf V}_{k-p}^{k-1}) \geq (\sigma_-/\sigma_+) \prod_{i=k-p+1}^{k-1} b^+_-(\overline{\bf Y}^i_{i-1},W_i)^{-2} = C_3 C_1^{2(p-1)}  \prod_{i=k-p+1}^{k-1} b^+_-(\overline{\bf Y}^i_{i-1},W_i)^{-2}$. In view of $\rho \in (0,1)$, there exists a finite and positive constant $C_4$ such that $\rho^{\epsilon} = e^{-2\alpha(p-1)C_4}$ with $\alpha$ defined in Assumption \ref{assn_gbound2}. Then, 
\begin{align}
& \mathbb{P}_{\theta^*} \left(\omega(\overline{\bf V}_{k-p}^{k-1}) \leq C_3 \rho^{\epsilon \lfloor(k-p)/p\rfloor } \right) \nonumber \\
& =  \mathbb{P}_{\theta^*} \left(\omega(\overline{\bf V}_{k-p}^{k-1}) \leq C_3 e^{-2\alpha(p-1)C_4 \lfloor(k-p)/p\rfloor} \right) \nonumber  \\
& \leq \mathbb{P}_{\theta^*} \left(  \prod_{i=k-p+1}^{k-1} b^+_-(\overline{\bf Y}^i_{i-1},W_i )^{-2} \leq C_1^{-2(p-1)} e^{-2 \alpha (p-1)C_4 \lfloor(k-p)/p\rfloor }\right) \nonumber \\
& = \mathbb{P}_{\theta^*} \left(  \prod_{i=k-p+1}^{k-1} b^+_-(\overline{\bf Y}^i_{i-1},W_i ) \geq C_1^{(p-1)} e^{\alpha (p-1)C_4 \lfloor(k-p)/p\rfloor }\right). \label{omega_lower}
\end{align}
Observe that, if $X_1,\ldots, X_\ell$ are identically distributed, we have $P(X_1\cdots X_\ell \geq A) \leq P(\{X_1 \geq A^{1/\ell}\} \cup \{X_2 \geq A^{1/\ell}\} \cup \cdots \cup \{X_\ell \geq A^{1/\ell}\} ) \leq \sum_{i=1}^\ell P(X_i \geq A^{1/\ell}) = \ell P(X_i \geq A^{1/\ell})$. Therefore, (\ref{omega_lower}) is bounded by $(p-1) \mathbb{P}_{\theta^*}(b^+_-(\overline{\bf Y}_{k-1}^k,W_k) \geq C_1 e^{\alpha C_4 \lfloor(k-p)/p\rfloor })$. From Assumption \ref{assn_gbound2}, this is no larger than $(p-1) C_2 (C_4 \lfloor(k-p)/p\rfloor )^{-\beta}$ for $k \geq 2p$, and  $\mathbb{P}_{\theta^*}(\omega(\overline{\bf V}_{k-p}^{k-1}) \leq C_3 \rho^{\epsilon \lfloor(k-p)/p\rfloor } \text{ i.o.})=0$ follows from the Borel-Cantelli lemma. When $p=1$, we have $\mathbb{P}_{\theta^*}(\omega(\overline{\bf V}_{k-p}^{k-1}) \leq C_3 \rho^{\epsilon \lfloor(k-p)/p\rfloor } \text{ i.o.})=0$ because $\omega(\overline{\bf V}_{k-p}^{k-1})=\sigma_-/ \sigma_+$.  Substituting this bound and (\ref{omega_prod}) and (\ref{omega_high}) into (\ref{pp_bound_2}) gives, for $p \leq k \leq n$,
\begin{equation} \label{omega_bound}
 \sup_{x_0 \in \mathcal{X}} \sup_{\theta \in \Theta} \left| \log p_{\theta}(Y_k| \overline{\bf Y}_0^{k-1},{\bf W}_{0}^k,x_0) - \log {p}_{\theta}(Y_k| \overline{\bf Y}_0^{k-1},{\bf W}_{0}^k) \right| \leq \rho^{(1-3\epsilon)\lfloor(k-p)/p\rfloor } b_{k},
\end{equation} 
where $\mathbb{P}_{\theta^*}(b_{k} \geq M \text{ i.o.})=0$ for a constant $M < \infty$. 

The right hand side of (\ref{omega_bound}) gives the second bound in (\ref{diff_bound})  because $(1-3\epsilon)\lfloor(k-p)/p\rfloor \geq \lfloor(k-p)/p\rfloor/2 \geq \lfloor(k-p)/2p\rfloor \geq \lfloor k/3p\rfloor$, where the last inequality holds because, for any numbers $a,b>0$ and $k\geq 0$, 
\begin{equation} \label{floor_ineq}
{\lfloor(k-a)_+/b\rfloor } \geq {\lfloor k/(a+b)\rfloor }.
\end{equation}
Therefore, (\ref{diff_bound}) holds, and the stated result is proven.

\end{proof}

\begin{proof}[Proof of Lemma \ref{lemma3_dmr}]
The proof uses a similar argument to the proof of Lemma 3 in DMR and the proof of Lemma \ref{lemma_lnx0}. We first show part (a) for $-m+p \leq k \leq n$. Using a similar argument to (\ref{p_kx0}) and (\ref{rho_power}) in conjunction with Lemma \ref{corollary_1} gives
\begin{align}
& p_\theta(Y_k|\overline{\bf Y}^{k-1}_{-m},{\bf W}_{-m}^k,X_{-m}=x) - p_\theta(Y_k|\overline{\bf Y}^{k-1}_{-m'},{\bf W}_{-m'}^k,X_{-m'}=x') \nonumber \\
& = \int \int \int g_{\theta}(Y_k|\overline{\bf Y}_{k-1},x_k,W_k) p_\theta (x_k|x_{k-p}, \overline{\bf Y}^{k-1}_{k-p}, {\bf W}_{k-p}^{k-1}) \mu (dx_k) \nonumber \\
& \quad \times \mathbb{P}_\theta(dx_{k-p}|X_{-m}=x_{-m},\overline{\bf Y}^{k-1}_{-m}, {\bf W}_{-m}^{k}) \left[  \delta(x_{-m}-x) dx_{-m}  - \mathbb{P}_\theta(dx_{-m}| X_{-m'}=x', \overline{\bf Y}^{k-1}_{-m'},{\bf W}_{-m'}^k) \right] \label{p_m_mprime} \\
& \leq \prod_{i=1}^{\lfloor (k-p+m)/p \rfloor} \left( 1-\omega(\overline{\bf V}_{-m+pi-p}^{-m+pi-1}) \right) \sup_{ x_k',x_{k-p}\in \mathcal{X}} p_\theta ( x_k'|x_{k-p}, \overline{\bf Y}^{k-1}_{k-p}, {\bf W}_{k-p}^{k-1}) \int g_{\theta}(Y_k| \overline{\bf Y}_{k-1},x_{k}, W_k) \mu(dx_{k}), \nonumber 
\end{align}
where  $\delta(\cdot)$ denotes the Dirac delta function, and  the first equality uses the fact $\mathbb{P}_\theta(X_{k-p} \in \cdot |X_{-m},\overline{\bf Y}^{k-1}_{-m'},{\bf W}_{-m'}^k) = \mathbb{P}_\theta(X_{k-p}\in \cdot |X_{-m},\overline{\bf Y}^{k-1}_{-m},{\bf W}_{-m}^{k})$, which is proven as (\ref{yw_lag}).

Furthermore, (\ref{p_kx0}) and (\ref{p_k}) imply that, for any $k \geq -m+ p$, $( p_{\theta}(Y_k| \overline{\bf Y}_{-m}^{k-1},{\bf W}_{-m}^k,x_{-m}) \wedge p_{\theta}(Y_k| \overline{\bf Y}_{-m'}^{k-1},{\bf W}_{-m'}^k,x_{-m'}) ) \geq \inf_{x_k',x_{k-p}\in \mathcal{X}} p_\theta (x_k'|x_{k-p}, \overline{\bf Y}^{k-1}_{k-p}, {\bf W}_{k-p}^{k-1}) \int g_{\theta}(Y_k| \overline{\bf Y}_{k-1},x_{k}, W_k) \mu(dx_{k})$. Therefore, it follows from the inequality $|\log x - \log y|\leq |x-y|/(x \wedge y)$ that 
\begin{equation}\label{omega_kk}
\begin{aligned}
& \left| \log p_\theta(Y_k|\overline{\bf Y}^{k-1}_{-m},{\bf W}_{-m}^k,X_{-m}=x) - \log p_\theta(Y_k|\overline{\bf Y}^{k-1}_{-m'},{\bf W}_{-m'}^k,X_{-m'}=x') \right| \\
& \leq \frac{\prod_{i=1}^{\lfloor (k-p+m)/p \rfloor} \left( 1-\omega(\overline{\bf V}_{-m+pi-p}^{-m+pi-1}) \right)}{ \omega(\overline{\bf V}_{k-p}^{k-1})}.
\end{aligned}
\end{equation}
Proceeding as in (\ref{omega_prod})--(\ref{omega_bound}) in the proof of Lemma \ref{lemma_lnx0}, we find that there exist $\rho \in (0,1)$ and $\epsilon \in (0,1/8]$ such that the right hand side of (\ref{omega_kk}) is bounded by $\rho^{(1-2\epsilon)\lfloor(k-p+m)/p\rfloor} \rho^{- \epsilon\lfloor(k-p)/p\rfloor} B_{k,m}$, where $\mathbb{P}_{\theta^*}(B_{k,m} \geq M \text{ i.o.})=0$ for a constant $M < \infty$. Therefore, part (a) is proven for $-m+p \leq k \leq n$ by noting that $\rho^{- \epsilon\lfloor(k-p)/p\rfloor} \leq \rho^{- \epsilon\lfloor(k-p+m)/p\rfloor}$ and using the argument following (\ref{omega_bound}). Part (a) holds for $1\leq k \leq -m+p-1$ because $| \log p_\theta(Y_k|\overline{\bf Y}^{k-1}_{-m},{\bf W}_{-m}^k,X_{-m}=x) - \log p_\theta(Y_k|\overline{\bf Y}^{k-1}_{-m'},{\bf W}_{-m'}^k,X_{-m'}=x') |$ is bounded by $b_-^+(\overline{\bf Y}_{k-1}^k,W_k)$, which is finite $\mathbb{P}_{\theta^*}$-a.s. Part (b) follows from replacing $\mathbb{P}_\theta(dx_{-m}| X_{-m'}=x', \overline{\bf Y}^{k-1}_{-m'},{\bf W}_{-m'}^k)$ in (\ref{p_m_mprime}) with $\mathbb{P}_\theta(dx_{-m}|\overline{\bf Y}^{k-1}_{-m}, {\bf W}_{-m}^k)$. Part (c) follows from $b_-(\overline{\bf Y}_{k-1}^k,W_k) \leq p_\theta(Y_k|\overline{\bf Y}^{k-1}_{-m},{\bf W}_{-m}^k,X_{-m}=x) \leq b_+(\overline{\bf Y}_{k-1}^k,W_k)$ and Assumption \ref{assn_gbound}.
\end{proof}

\begin{proof}[Proof of Proposition \ref{prop_consistency}]
The proof follows the argument of the proof of Proposition 2 and Theorem 1 in DMR. From Property 24.2 of \citet[][page 385]{gourieroux95book}, the stated result holds if (i) $\Theta$ is compact, (ii) $l_n(\theta,x_0)$ is continuous uniformly in $x_0 \in \mathcal{X}$, (iii) $\sup_{x_0 \in \mathcal{X}}\sup_{\theta \in \Theta} | n^{-1}l_n(\theta,x_0) - l(\theta)| \to 0$ $\mathbb{P}_{\theta^*}$-a.s., and (iv) $l(\theta)$ is uniquely maximized at $\theta^*$. 

(i) follows from Assumption \ref{assn_a1}(a). (ii) follows from Assumption \ref{assn_consis}(a). In view of Lemma \ref{lemma_lnx0} and the compactness of $\Theta$, (iii) holds if, for all $\theta \in \Theta$,
\begin{equation}\label{ell_cgce}
\limsup_{\delta\to 0}\limsup_{n\to\infty} \sup_{|\theta'-\theta| \leq \delta} | n^{-1}l_n(\theta') - l(\theta)|=0 \quad \mathbb{P}_{\theta^*}\text{-a.s.}
\end{equation}
Noting that $l_n(\theta)=\sum_{k=1}^n \Delta_{k,0}(\theta)$, the left hand side of (\ref{ell_cgce}) is bounded by $A+B+C$, where
\begin{align*}
A & := \limsup_{n\to\infty}\sup_{\theta' \in \Theta} \left| n^{-1}\sum_{k=1}^n( \Delta_{k,0}(\theta') - \Delta_{k,\infty}(\theta')) \right|, \\
B & := \limsup_{\delta\to 0}\limsup_{n\to\infty} \sup_{|\theta'-\theta| \leq \delta}\left| n^{-1}\sum_{k=1}^n( \Delta_{k,\infty}(\theta') - \Delta_{k,\infty}(\theta)) \right|, \\
C & := \limsup_{n\to\infty} \left| n^{-1}\sum_{k=1}^n( \Delta_{k,\infty}(\theta) - \mathbb{E}_{\theta^*}\Delta_{k,\infty}(\theta)) \right|. 
\end{align*}
Fix $x \in \mathcal{X}$. Setting $m=0$ and letting $m' \to \infty$ in Lemma \ref{lemma3_dmr}(a)(b) show that $\sup_{\theta \in \Theta}|\Delta_{k,0}(\theta) - \Delta_{k,\infty}(\theta)|\leq \sup_{\theta \in \Theta} |\Delta_{k,0}(\theta)-\Delta_{k,0,x}(\theta) | + \sup_{\theta \in \Theta} |\Delta_{k,0,x}(\theta) - \Delta_{k,\infty}(\theta)| \leq 2 A_{k,0} \rho^{\lfloor k/3p \rfloor}$ while \\ $\sup_{\theta \in \Theta}|\Delta_{k,0}(\theta)-\Delta_{k,0,x}(\theta) | + \sup_{\theta \in \Theta}|\Delta_{k,0,x}(\theta) - \Delta_{k,\infty}(\theta)| \leq 4B_k$ follows from Lemma \ref{lemma3_dmr}(c). Consequently, $A=0$ $\mathbb{P}_{\theta^*}$-a.s. $B$ is bounded by, from the ergodic theorem and Lemma \ref{lemma_4_dmr}, 
\begin{align*}
& \lim_{\delta \to 0}\limsup_{n \to \infty} n^{-1} \sum_{k=1}^n \sup_{|\theta'-\theta| \leq \delta} | \Delta_{k,\infty}(\theta') - \Delta_{k,\infty}(\theta)| \\
& = \lim_{\delta \to 0} \mathbb{E}_{\theta^*} \left[ \sup_{|\theta'-\theta| \leq \delta} | \Delta_{0,\infty}(\theta') - \Delta_{0,\infty}(\theta)| \right] = 0\quad \mathbb{P}_{\theta^*}\text{-a.s.}
\end{align*}
$C=0$ $\mathbb{P}_{\theta^*}$-a.s.\ by the ergodic theorem, and hence (iii) holds. For (iv), observe that \\ $\mathbb{E}_{\theta^*}|\log p_{\theta}(Y_1 |\overline{\bf{Y}}_{-m}^0,{\bf W}_{-m}^1)|< \infty$ from Lemma \ref{lemma3_dmr}(c). Therefore, for any $m$, $\mathbb{E}_{\theta^*}[ \log p_{\theta}(Y_1 |\overline{\bf{Y}}_{-m}^0,{\bf W}_{-m}^1)]$ is uniquely maximized at $\theta^*$ from Lemma 2.2 of \citet{neweymcfadden94hdbk} and Assumption \ref{assn_consis}(b). Then, (iv) follows because $\mathbb{E}_{\theta^*}[ \log p_{\theta}(Y_1 |\overline{\bf{Y}}_{-m}^0,{\bf W}_{-m}^1)]$ converges to $l(\theta)$ uniformly in $\theta$ as $m \to \infty$ from Lemma \ref{lemma3_dmr} and the dominated convergence theorem. Therefore, (iv) holds, and the stated result is proven. 
\end{proof}

\begin{proof}[Proof of Corollary \ref{cor_consistency_xi}]
Observe that $|n^{-1}l_n(\theta,\xi) - l(\theta)| \leq \sup_{x_0 \in \mathcal{X}}|n^{-1}l_n(\theta,x_0) - l(\theta)|$ because \\ $\inf_{x_0 \in \mathcal{X}} l_n(\theta,x_0) \leq l_n(\theta,\xi) \leq \sup_{x_0 \in \mathcal{X}} l_n(\theta,x_0)$. Furthermore, $l_n(\theta,\xi)$ is continuous in $\theta$ from the continuity of $l_n(\theta,x_0)$. Therefore, the stated result follows from the proof of Proposition \ref{prop_consistency}.
\end{proof}

\begin{proof}[Proof of Lemma \ref{lemma_psi_bound}]
The proof follows the argument of the proof of Lemma 13 in DMR. When $(k,m)=(1,0)$, the stated result follows from $\Psi_{ 1,0,x}^j(\theta) = \mathbb{E}_{\theta^*}[\phi^j_{\theta 1}|\overline{\bf V}^{}_{0},X_{0}= x]$, $\Psi_{1,0}^j(\theta) = \mathbb{E}_{\theta^*}[\phi^j_{\theta 1}|\overline{\bf V}_{0}]$, $\sup_{\theta \in G}|\phi^j_{\theta k}| \leq |\phi^j_k|_{\infty}$, and Assumption \ref{assn_distn}. Henceforth, assume $(k,m) \neq (1,0)$ so that $k+m \geq 2$.

For part (a), it follows from Lemma \ref{lemma_ijl}(a)--(e) that
\begin{align}
 \left|\Psi_{k,m,x}^j(\theta)-\Psi_{k,m}^j(\theta)\right| &\leq 4\sum_{t=-m+1}^k |\phi_t^j|_{\infty} \left( \Omega_{t-1,-m} \wedge \tilde \Omega_{t,k-1} \right) \nonumber \\
& \leq 4 \max_{-m \leq t' \leq k} |\phi_{t'}^j|_{\infty} \sum_{t=-m+1}^{k} \left( \Omega_{t-1,-m} \wedge \tilde \Omega_{t,k-1} \right), \label{psi_diff0}
\end{align}
where $\Omega_{t-1,-m}:=\prod_{i=1}^{\lfloor (t-1+m)/p \rfloor} ( 1-\omega(\overline{\bf V}_{-m+pi-p}^{-m+pi-1}))$ and $\tilde \Omega_{t,k-1}:=\prod_{i=1}^{\lfloor (k-1-t)/p \rfloor} ( 1-\omega(\overline{\bf V}_{k- 2 -pi+1}^{k- 2 -pi+p}))$ as defined in the paragraph preceding Lemma \ref{lemma_ijl}. As shown on page 2294 of DMR, we have \\$\max_{-m \leq t' \leq k} |\phi_{t'}^j|_{\infty} \leq \sum_{t=-m}^k(|t|\vee 1)^2 |\phi_t^j|_{\infty}/(|t|\vee 1)^{2} \leq 2 (k \vee m)^2 [\sum_{t=-\infty}^\infty |\phi_t^j|_{\infty} / (|t|\vee 1)^{2}] \leq (k+m)^2 K_j$ with $K_j \in L^{3-j}(\mathbb{P}_{\theta^*})$. 

We proceed to bound $\sum_{t=-m+1}^{k} ( \Omega_{t-1,-m} \wedge \tilde \Omega_{t,k-1} )$ on the right hand side of (\ref{psi_diff0}). Similar to the proof of Lemma \ref{lemma_lnx0}, fix $\epsilon \in (0,1/8 p (p+1)]$; then, there exists $\rho \in (0,1)$ such that $\mathbb{P}_{\theta^*}(1- \omega(\overline{\bf V}_{k-p}^{k-1}) \geq \rho ) \leq \epsilon$. Define $I_{p,i}:=\sum_{t=0}^{(p-2)_+}\mathbb{I}\{1-\omega(\overline{\bf V}_{t+i}^{t+i+p-1}) \geq \rho\}$ and $\nu_b^a := \sum_{i=b}^{a}I_{p,i}$. Observe that (recall we define $\prod_{i=c}^d x_i=1$ when $c>d$)
\begin{equation}\label{omega_prod_bound1}
\begin{aligned}
\prod_{i=\lfloor (b-s)/p \rfloor+1}^{\lfloor (a-s)/p \rfloor} ( 1-\omega(\overline{\bf V}_{s+pi-p}^{s+pi-1})) 
& \leq \rho^{(\lfloor (a-s)/p \rfloor - \lfloor (b-s)/p \rfloor)_+ - \sum_{i=\lfloor (b-s)/p \rfloor+1}^{\lfloor (a-s)/p \rfloor} \mathbb{I}\{1-\omega(\overline{\bf V}_{s+pi-p}^{s+pi-1}) \geq \rho\}} \\
& \leq \rho^{\lfloor (a-b)_+/p \rfloor - \nu_{b-p}^{a-p}},
\end{aligned}
\end{equation}
where the second inequality follows from $\lfloor x \rfloor - \lfloor y \rfloor \geq \lfloor x-y \rfloor$, $(\lfloor x/p \rfloor)_+=\lfloor x_+/p \rfloor$, $s+p(\lfloor (b-s)/p \rfloor+1)-p \geq b-p$, and $s+p\lfloor (a-s)/p \rfloor-1 \leq a-1$. Similarly, we obtain
\begin{equation}\label{omega_prod_bound2}
\prod_{i=\lfloor (k-1-a)/p \rfloor+1}^{\lfloor (k-1-b)/p \rfloor} ( 1-\omega(\overline{\bf V}_{k-2-pi+1}^{k-2-pi+p})) \leq \rho^{\lfloor (a-b)_+/p \rfloor - \nu_{b}^{a}},
\end{equation}
because $k-2-p\lfloor (k-1-b)/p \rfloor+1 \geq b$ and $k-2-p(\lfloor (k-1-a)/p \rfloor+1)+p \leq a+p-1$. By applying (\ref{omega_prod_bound1}) to $\Omega_{t-1,-m}$ with $a=t-1,b=s=-m$, applying (\ref{omega_prod_bound2}) to $\tilde \Omega_{t,k-1}$ with $a=k-1$ and $b=t$, and using (\ref{floor_ineq}) and $-m+1 \leq t \leq k$, we obtain
\begin{equation} \label{omega_two}
\begin{aligned}
\Omega_{t-1,-m} &\leq \rho^{\lfloor (t-1+m)_+/p \rfloor - \nu_{-m-p}^{t-1-p}} \leq \rho^{\lfloor (t+m)/(p+1) \rfloor - \nu_{-m-p}^{k}}, \\
 \tilde \Omega_{t,k-1} & \leq \rho^{\lfloor (k-1-t)_+/p \rfloor - \nu_{t}^{k-1}} \leq \rho^{\lfloor (k-t)/(p+1) \rfloor - \nu_{-m-p}^{k}}.
\end{aligned}
\end{equation} 
Observe that, for any $\rho \in (0,1)$, $c>0$ and any integers $a<b$,
\begin{equation} \label{rho_sum_bound}
\begin{aligned}
\sum_{t=-\infty}^\infty \left( \rho^{\lfloor (t+a)/c \rfloor} \wedge \rho^{\lfloor(b-t)/c\rfloor} \right) & \leq \sum_{t=-\infty}^{\lfloor(b-a)/2\rfloor} \rho^{\lfloor (b-t)/c \rfloor} + \sum_{t=\lfloor(b-a)/2\rfloor+1}^{\infty} \rho^{\lfloor(t+a)/c \rfloor} \\
& \leq \frac{c}{1-\rho} \left( \rho^{\lfloor(b-\lfloor(b-a)/2\rfloor)/c\rfloor} + \rho^{\lfloor(\lfloor(b-a)/2\rfloor+1+a)/c\rfloor} \right) \\
& \leq \frac{2c}{1-\rho} \rho^{\lfloor(a+b)/2c\rfloor}.
\end{aligned}
\end{equation}
From (\ref{rho_sum_bound}), $\sum_{t=-m+1}^{k} ( \Omega_{t-1,-m} \wedge \tilde \Omega_{t,k-1} )$ is bounded by $2(p+1) \rho^{\lfloor(k+m)/2(p+1)\rfloor-\nu_{-m-p}^k} /(1-\rho)$. Because $\overline{\bf V}_{i}^{i+p-1}$ is stationary and ergodic, it follows from the strong law of large numbers that $(\lfloor (k+m)/2(p+1)\rfloor)^{-1} \nu_{-m-p}^k \to 2(p+1) \estar [I_{p,i}] \leq 2 p (p+1)\epsilon$ $\pstar$-a.s.\ as $k+m \to \infty$. In view of $\epsilon < 1/8 p (p+1)$, we have $\pstar(\rho^{\lfloor(k+m)/2(p+1)\rfloor-\nu_{-m-p}^k} \geq \rho^{ \lfloor (k+m)/2(p+1)\rfloor/2}\text{ i.o.})=0$. Henceforth, let $\{b_{k,m}\}_{k\geq 1, m \leq 0}$ denote a generic nonnegative random sequence such that $\mathbb{P}_{\theta^*}(b_{k,m} \geq M \text{ i.o.})=0$ for a finite constant $M$. With this notation and the fact that $\lfloor(k+m)/2(p+1)\rfloor/2 \geq \lfloor(k+m)/4(p+1)\rfloor$, $\sum_{t=-m+1}^{k} ( \Omega_{t-1,-m} \wedge \tilde \Omega_{t,k-1} )$ is bounded by 
\begin{equation} \label{omega_sum_4}
\rho^{\lfloor(k+m)/4(p+1)\rfloor}b_{k,m},
\end{equation}
and part (a) is proven.

For part (b), it follows from (\ref{Psi_x_defn}) and Lemma \ref{lemma_ijl}(a)--(e) that
\begin{align*}
 \left| \Psi_{k,m,x}^j(\theta)-\Psi_{k,m',x'}^j(\theta) \right| & \leq 4 \sum_{t=-m+1}^{k} \left( \Omega_{t-1,-m} \wedge \tilde \Omega_{t,k-1} \right)|\phi_t^j|_{\infty} + 2 \sum_{t=-m'+1}^{-m} \tilde \Omega_{t,k-1} |\phi_t^j|_{\infty}.
\end{align*}
The first term on the right hand side is bounded by $(k + m)^2 K_j\rho^{\lfloor(k+m)/4(p+1)\rfloor}b_{k,m}$ with $K_j \in L^{3-j}(\mathbb{P}_{\theta^*})$ from the same argument as the proof of part (a). For the second term on the right hand side, write $\tilde \Omega_{t,k-1}$ as $\tilde \Omega_{t,k-1} = \tilde \Omega_{-m,k-1}\tilde \Omega_{t,k-1}^{-m}$, where $\tilde \Omega_{t,k-1}^{-m}:=\prod_{i=\lfloor (k-1+m)/p \rfloor+1}^{\lfloor (k-1-t)/p \rfloor} ( 1-\omega(\overline{\bf V}_{k-2-pi+1}^{k-2-pi+p}))$. By applying (\ref{omega_prod_bound2}) to $\tilde \Omega_{t,k-1}^{-m}$ with $a=-m$ and $b=t$, we obtain $\tilde \Omega_{t,k-1}^{-m} \leq \rho^{\lfloor (-m-t)/p \rfloor - \nu_{t}^{-m}}$. In conjunction with $\tilde \Omega_{t,k-1}^{-m} \leq 1$, the second term on the right hand side is bounded by $2\tilde \Omega_{-m,k-1} R_{m,m'}$, where 
\begin{equation} \label{Rmm}
R_{m,m'}:=\sum_{t=-m'+1}^{-m} d_{t,m} |\phi_t^j|_{\infty}, \quad d_{t,m}:=\min\{1,\rho^{\lfloor (-m-t)/p \rfloor - \nu_{t}^{-m}}\}.
\end{equation}
From a similar argument to (\ref{omega_two})--(\ref{omega_sum_4}), we can bound $\tilde \Omega_{-m,k-1}$ as $\tilde \Omega_{-m,k-1} \leq \rho^{\lfloor(k+m)/4(p+1)\rfloor}b_{k,m}$. It follows from $(-m-t)^{-1}\nu_{t}^{-m} \to \estar[I_{p,i}] \leq p\epsilon$ $\pstar$-a.s.\ as $t+m \to - \infty$ that $\pstar( d_{t,m} \geq \rho^{\lfloor (-m-t)/p \rfloor/2} \text{ i.o.})=0$. Furthermore, $|\phi_t^j|_{\infty}$ satisfies $\pstar(|\phi_t^j|_{\infty} \geq \rho^{-\lfloor (-m-t)/p \rfloor/4} \text{ i.o.})=0$ from Markov's inequality and the Borel-Cantelli lemma. Therefore, $\pstar(d_{t,m}|\phi_t^j|_{\infty} \geq \rho^{\lfloor (-m-t)/p \rfloor/4} \text{ i.o.})=0$. In conjunction with $0 \leq d_{t,m}|\phi_t^j|_{\infty}<\infty$ $\pstar$-a.s., we obtain $\overline R_{m}:=\sup_{m'\geq m} R_{m,m'}<\infty$ $\pstar$-a.s., and the distribution of $\overline R_{m}$ does not depend on $m$ because $\overline{\bf V}_{t}$ is stationary. Therefore, part (b) is proven by setting $B_m=\overline R_{m}$.
\end{proof}

\begin{proof}[Proof of Proposition \ref{prop_score}]
By setting $m=0$ and letting $m' \to \infty$ in Lemma \ref{lemma_psi_bound}, we obtain\\ $\sup_{\theta \in G} \sup_{x \in \mathcal{X}} |\Psi_{k,0,x}^1(\theta)-\Psi_{k,\infty}^1(\theta)| \leq (K_1+B_0) k^2 \rho^{\lfloor k/4(p+1)\rfloor} A_{k,0}$. Furthermore, the sum over finitely many $\sup_{\theta \in G} \sup_{x \in \mathcal{X}} |\Psi_{k,0,x}^1(\theta)-\Psi_{k,\infty}^1(\theta)|$ is $o(n^{1/2})$ $\pstar$-a.s.\ because \\ $\estar[\sup_{\theta \in G}\sup_{x \in \mathcal{X}} |\Psi_{k,0,x}^1(\theta)|] < \infty$ and $\estar[\sup_{\theta \in G} |\Psi_{k,\infty}^1(\theta)|] < \infty$ from Assumption \ref{assn_nabla_moment}. Therefore, we have $n^{-1/2}\nabla_\theta l_n(\theta^*,x_0) = n^{-1/2}\sum_{k=1}^n \Psi_{k,0,x_0}^1(\theta^*) = n^{-1/2}\sum_{k=1}^n \Psi_{k,\infty}^1(\theta^*) + o_p(1)$.

Because $\{\Psi_{k,\infty}^1(\theta^*)\}_{k=-\infty}^{\infty}$ is a stationary, ergodic, and square integrable martingale difference sequence, it follows from a martingale difference central limit theorem \citep[][Theorem 2.3]{mcleish74ap} that $n^{-1/2}\sum_{k=1}^n \Psi_{k,\infty}^1(\theta^*)\to_d N(0,I(\theta^*))$, and part (a) follows. For part (b), let $p_{n\theta}(x_0)$ denote $p_\theta({\bf Y}_1^n|\overline{\bf Y}_{0}, {\bf W}_{0}^n, x_0)$, and observe that
\begin{align*}
\nabla_\theta l_n(\theta,\xi) & = \frac{\nabla_\theta \int p_{n\theta}(x_0)\xi(dx_0)}{\int p_{n\theta}(x_0)\xi(dx_0)} = \frac{ \int \nabla_\theta \log p_{n\theta}(x_0) p_{n\theta}(x_0) \xi(dx_0)}{\int p_{n\theta}(x_0)\xi(dx_0)}.
\end{align*}
Therefore, $\min_{x_0}\nabla_\theta l_n(\theta^*,x_0) \leq \nabla_\theta l_n(\theta^*,\xi) \leq \max_{x_0}\nabla_\theta l_n(\theta^*,x_0)$ holds, and part (b) follows.
\end{proof}

\begin{proof}[Proof of Lemma \ref{lemma_gamma_cgce}]
The proof follows the argument of the proof of Lemma 17 in DMR and the proof of Lemma \ref{lemma_psi_bound}. Fix $\epsilon\in(0,1/32 p(p+1)]$ and choose $\rho\in(0,1)$ as in the proof of Lemma \ref{lemma_psi_bound}. When $(k,m)=(1,0)$, the stated result follows from $\sup_{\theta \in G}|\phi_{\theta k}| \leq |\phi_k|_{\infty}$. Henceforth, assume $(k,m) \neq (1,0)$ so that $k+m \geq 2$. For $a \leq b$, define $S_a^b:= \sum_{t=a}^{b} \phi_{\theta t}$. Let $\{b_{k,m}\}_{k\geq 1, m \leq 0}$ denote a generic nonnegative random sequence such that $\mathbb{P}_{\theta^*}(b_{k,m} \geq M \text{ i.o.})=0$ for a finite constant $M$. We prove part (a) first. Write $\Gamma_{k,m,x}(\theta)-\Gamma_{k,m}(\theta) = A + 2B + C$, where
\begin{align*}
A& :=\text{var}_{\theta}[ S_{-m+1}^{k-1} | \overline{\bf V}^{k}_{-m},X_{-m}= x] - \text{var}_{\theta}[ S_{-m+1}^{k-1} | \overline{\bf V}^{k-1}_{-m},X_{-m}= x] \\
& \quad - \text{var}_{\theta}[ S_{-m+1}^{k-1} | \overline{\bf V}^{k}_{-m}] + \text{var}_{\theta}[ S_{-m+1}^{k-1} | \overline{\bf V}^{k-1}_{-m}], \\
B& := \text{cov}_{\theta}[ \phi_{\theta k}, S_{-m+1}^{k-1} | \overline{\bf V}^{k}_{-m},X_{-m}= x] - \text{cov}_{\theta}[ \phi_{\theta k},S_{-m+1}^{k-1} | \overline{\bf V}^{k}_{-m}], \\
C &:= \text{var}_{\theta}[ \phi_{\theta k}| \overline{\bf V}^{k}_{-m},X_{-m}= x] - \text{var}_{\theta}[ \phi_{\theta k} | \overline{\bf V}^{k}_{-m}].
\end{align*}
From Lemma \ref{lemma_ijl}(f)--(k), $A$ is bounded as 
\begin{align*}
|A| & \leq 24 \sum_{-m+1 \leq s \leq t \leq k-1} \left( \Omega_{s-1,-m} \wedge \Omega_{t-1,s} \wedge \tilde \Omega_{t,k-1} \right) \max_{-m+1 \leq s \leq t \leq k-1} |\phi_t|_{\infty}|\phi_s|_{\infty}.
\end{align*}
From equation (46) of DMR on page 2299, we have $\max_{-m+1 \leq s \leq t \leq k-1} |\phi_t|_{\infty}|\phi_s|_{\infty} \leq \\ (m^3+k^3)\sum_{t=-\infty}^\infty |\phi_t|_{\infty}^2/(|t|\vee 1)^{2} \leq (k+m)^3 K$ for $K \in L^1(\pstar)$.

We proceed to bound $\Omega_{s-1,-m} \wedge \Omega_{t-1,s} \wedge \tilde \Omega_{t,k-1}$. By using the argument in (\ref{omega_prod_bound1})-(\ref{omega_two}), we obtain
\begin{equation*}
\Omega_{s-1,-m} \wedge \Omega_{t-1,s} \wedge \tilde \Omega_{t,k-1} \leq \left( \rho^{\lfloor(s+m)/(p+1)\rfloor } \wedge\rho^{\lfloor(t-s)/(p+1)\rfloor } \wedge \rho^{\lfloor (k-t)/(p+1) \rfloor } \right) \rho^{-\nu_{-m-p}^k}.
\end{equation*}
Furthermore, a derivation similar to DMR (page 2299) gives, for $n \geq 2$,
\begin{equation*} 
\sum_{0 \leq s \leq t \leq n} \left( \rho^{ \lfloor s/(p+1) \rfloor} \wedge \rho^{ \lfloor(t-s)/(p+1) \rfloor} \wedge \rho^{ \lfloor(n-t)/(p+1) \rfloor} \right) 
\leq 2 \sum_{s=0}^{n/2}\sum_{t = s}^{n-s} \left(\rho^{\lfloor(t-s)/(p+1) \rfloor} \wedge \rho^{\lfloor(n-t)/(p+1) \rfloor}\right).
\end{equation*}
From (\ref{rho_sum_bound}), the right hand side is bounded by, for a generic positive constant $\mathcal{C}$ that may take different values at different places,
\begin{equation}\label{st_bound}
\mathcal{C} \sum_{s=0}^{n/2} \rho^{\lfloor (n-s)/2(p+1) \rfloor} \leq \mathcal{C} \rho^{\lfloor n/4(p+1) \rfloor},
\end{equation}
where the inequality holds because $\sum_{t=a}^\infty \rho^{\lfloor t/b \rfloor} \leq b\rho^{\lfloor a/b \rfloor} / (1-\rho)$ for any integers $a \geq 0$ and $b>0$. Hence, $A$ is bounded by $K (k+m)^3 \rho^{\lfloor (k+m)/4(p+1) \rfloor} b_{k,m}$ by setting $n=k+m$ in (\ref{st_bound}) and noting that $(\lfloor (k+m)/4(p+1)\rfloor)^{-1} \nu_{-m-p}^k \to 4(p+1) \estar[I_{p,i}] \leq 4p(p+1)\epsilon < 1/2$ $\pstar$-a.s.\ as $k+m \to \infty$. For $B$, from Lemma \ref{lemma_ijl}(f)--(i), (\ref{omega_prod_bound1}), (\ref{omega_two}), $t \geq -m$, and (\ref{rho_sum_bound}), $B$ is bounded as, with $M_k:=\max_{-m+1 \leq t \leq k-1}|\phi_k|_{\infty} |\phi_t|_{\infty}$,
\begin{align*}
|B| &\leq 12 \sum_{-m+1 \leq t \leq k-1} \left( \Omega_{t-1,-m} \wedge \Omega_{k-1,t} \right) M_k \\ 
& \leq 12 \sum_{-m+1 \leq t \leq k-1} \left( \rho^{(t+m)/(p+1)} \wedge \rho^{(k-t)/(p+1)} \right) \rho^{-\nu_{-m-p}^k} M_k \\
& \leq \mathcal{C} \rho^{\lfloor (k+m)/2(p+1) \rfloor - \nu_{-m-p}^k} M_k,
\end{align*}
which is written as $K (k+m)^3 \rho^{\lfloor (k+m)/4(p+1) \rfloor} b_{k,m}$ for $K \in L^1(\pstar)$.
$C$ is bounded by $6 \Omega_{k-1,-m} |\phi_k|_{\infty}^2 $ from Lemma \ref{lemma_ijl}(h), and part (a) is proven.

We proceed to prove part (b). Write $\Gamma_{k,m',x'}(\theta) = A + 2B + 2C+D$, where
\begin{align*}
A& :=\text{var}_{\theta}[ S_{-m+1}^{k} | \overline{\bf V}^{k}_{-m'},X_{-m'}= x'] - \text{var}_{\theta}[ S_{-m+1}^{k-1} | \overline{\bf V}^{k-1}_{-m'},X_{-m'}= x'], \\
B& := \text{cov}_{\theta}[\phi_{\theta k}, S_{-m'+1}^{-m} | \overline{\bf V}^{k}_{-m'},X_{-m'}= x'], \\
C& := \text{cov}_{\theta}[S_{-m+1}^{k-1}, S_{-m'+1}^{-m} | \overline{\bf V}^{k}_{-m'},X_{-m'}= x'] - \text{cov}_{\theta}[ S_{-m+1}^{k-1}, S_{-m'+1}^{-m} | \overline{\bf V}^{k-1}_{-m'}, X_{-m'}= x'], \\
D &:= \text{var}_{\theta}[S_{-m'+1}^{-m}| \overline{\bf V}^{k}_{-m'},X_{-m'}= x'] - \text{var}_{\theta}[S_{-m'+1}^{-m} | \overline{\bf V}^{k-1}_{-m'},X_{-m'}= x'].
\end{align*}
$|\Gamma_{k,m,x}(\theta) - A|$ is bounded similarly to $|\Gamma_{k,m,x}(\theta)-\Gamma_{k,m}(\theta)|$ in part (a) by using Lemma \ref{lemma_ijl}. From Lemma \ref{lemma_ijl}(g), $B$ is bounded by $2 \sum_{t=-m'+1}^{-m} \Omega_{k-1,t}|\phi_k|_{\infty} |\phi_t|_{\infty} = B_1 \times B_2$, where 
\begin{align*}
& B_1: = 2 |\phi_k|_{\infty} \prod_{i= \lfloor (-m-t)/p\rfloor +1}^{\lfloor (k-1-t)/p\rfloor} ( 1-\omega(\overline{\bf V}_{t+pi-p}^{t+pi-1})), \quad B_2:= \sum_{t=-m'+1}^{-m} \prod_{i= 1}^{\lfloor (-m-t)/p\rfloor} ( 1-\omega(\overline{\bf V}_{t+pi-p}^{t+pi-1}))|\phi_t|_{\infty}.
\end{align*}
$B_1$ is bounded by $|\phi_k|_{\infty}\rho^{\lfloor (k+m)/2(p+1) \rfloor} b_{k,m}$ from the same argument as part (a). Because $\pstar(|\phi_k|_{\infty} \geq \rho^{-\lfloor (k+m)/2(p+1) \rfloor/2} \text{ i.o.})=0$, $B_1$ is bounded by $\rho^{\lfloor (k+m)/4(p+1) \rfloor} b_{k,m}$. For $B_2$, because $\prod_{i= 1}^{\lfloor (-m-t)/p\rfloor} ( 1-\omega(\overline{\bf V}_{t+pi-p}^{t+pi-1}))$ is bounded by $\rho^{\lfloor (-m-t)/p\rfloor - \nu_{t-p}^{-m}}$ from (\ref{omega_prod_bound1}), we can use the same argument as the one for $R_{m,m'}$ defined in (\ref{Rmm}) to show that $\overline B_{2m}:=\sup_{m'\geq m} B_2 <\infty$ $\pstar$-a.s.\ and $\overline B_{2m}$ is stationary. Therefore, $B$ is bounded by $\rho^{\lfloor (k+m)/4(p+1) \rfloor} b_{k,m}\overline B_{2m}$.

$|C|+|D|$ is bounded by, with $\Delta_{t,s}:=|\text{cov}_{\theta}[\phi_{\theta t},\phi_{\theta s} | \overline{\bf V}^{k}_{-m'},X_{-m'}= x'] - \text{cov}_{\theta}[\phi_{\theta t},\phi_{\theta s} | \overline{\bf V}^{k-1}_{-m'}, X_{-m'}= x']|$,
\begin{equation}\label{cd_bound}
\sum_{t=-m'+1}^{k-1}\sum_{s=-m'+1}^{-m}\Delta_{t,s} \leq 2\sum_{s=-m'+1}^{-m} \sum_{t=s}^{k-1} \Delta_{t,s} \leq 2\sum_{s=-m'+1}^{-m} \sum_{t=s}^{k-1} \left( \Omega_{t-1,s} \wedge \tilde \Omega_{t,k-1} \right) |\phi_t|_{\infty} |\phi_s|_{\infty}.
\end{equation}
Similar to (\ref{omega_two}), we obtain
\begin{align*} 
\Omega_{t-1,s} \wedge \tilde \Omega_{t,k-1} 
& \leq \left( \rho^{\lfloor(t-s)/(p+1)\rfloor - \nu_{s-p}^{t-1-p}} \wedge \rho^{\lfloor (k-t)/(p+1) \rfloor - \nu_t^{k-1}} \right) \leq \left( \rho^{\lfloor(t-s)/(p+1)\rfloor} \wedge \rho^{\lfloor (k-t)/(p+1) \rfloor } \right) \rho^{ - \nu_{s-p}^{k-1}}.
\end{align*}
Therefore, the right hand side of (\ref{cd_bound}) is bounded by
\begin{equation} \label{cd_bound2}
2\sum_{s=-m'+1}^{-m} \sum_{t=s}^{k-1} \left( \rho^{\lfloor (t-s)/(p+1) \rfloor} \wedge \rho^{\lfloor(k-t)/(p+1)\rfloor} \right) \rho^{-\nu_{-m}^{k-1}} \rho^{-\nu_{s-p}^{-m}} |\phi_t|_{\infty} |\phi_s|_{\infty}. 
\end{equation}
DMR (page 2300) show that the following holds for $k \geq 1$, $m \geq 0$ and $t,s \leq 0$:
\begin{align*}
\text{if } t \leq (k + s - 1)/2, &\text{ then }(|t| - 1)/2 \leq (3k + s - 3)/4 - t, \\
\text{if } (k + s - 1)/2 \leq t \leq k - 1, &\text{ then } (|t | - 1)/4 \leq t + (-k - 3s + 1)/4. 
\end{align*}
Consequently, (\ref{cd_bound2}) is bounded by
\begin{align*}
& 2 \rho^{\lfloor (k+m-2)/8(p+1) \rfloor -\nu_{-m}^{k-1}} \sum_{s=-m'+1}^{-m} \rho^{\lfloor (k-2s-m)/8(p+1) \rfloor} \rho^{-\nu_{s-p}^{-m}} |\phi_s|_{\infty} \\
& \quad \times \left(\sum_{t=s}^{(k+s)/2} \rho^{\lfloor ((3k+s-3)/4-t)/(p+1) \rfloor}|\phi_t|_{\infty} + \sum_{t=(k+s)/2}^{k-1} \rho^{\lfloor (t+(-k-3s+1)/4)/(p+1)\rfloor}|\phi_t|_{\infty} \right) \\
& \leq \mathcal{C} \rho^{\lfloor (k+m-2)/8(p+1) \rfloor - \nu_{-m}^{k-1} } \sum_{s=-m'+1}^{-m} \rho^{\lfloor (-m-s)/8(p+1) \rfloor -\nu_{s-p}^{-m} }|\phi_s|_{\infty} \sum_{t= s}^{k-1} \rho^{\lfloor (|t|-1)/4(p+1)\rfloor}|\phi_t|_{\infty} \\
& \leq \rho^{\lfloor (k+m)/16(p+1) \rfloor}b_{k,m} \times E \times F_{m,m'}, 
\end{align*}
where $E:=\sum_{t= -\infty}^{\infty} \rho^{\lfloor (|t|-1)/4(p+1)\rfloor}|\phi_t|_{\infty}$, and $F_{m,m'}:= \sum_{s=-m'+1}^{-m} \rho^{\lfloor (-m-s)/8(p+1) \rfloor -\nu_{s-p}^{-m} }|\phi_s|_{\infty}$. Because $E \in L^1(\pstar)$, $\overline F_{m}:=\sup_{m'\geq m} F_{m,m'} <\infty$ $\pstar$-a.s., and $\overline F_{m}$ is stationary, (\ref{cd_bound2}) is bounded by $\rho^{\lfloor (k+m)/16(p+1) \rfloor}b_{k,m} E \overline F_{m}$, and part (b) is proven.
\end{proof}

\begin{proof}[Proof of Proposition \ref{prop_hessian}] 

Define $\Upsilon_{k,m,x}(\theta) :=\Psi_{k,m,x}^2(\theta) +\Gamma_{k,m,x}(\theta)$ and $\Upsilon_{k,\infty}(\theta) :=\Psi_{k,\infty}^2(\theta) +\Gamma_{k,\infty}(\theta)$, so that $\nabla_\theta^2 l_n(\theta,x) = \sum_{k=1}^n \Upsilon_{k,0,x}(\theta)$. By setting $m=0$ and letting $m' \to \infty$ in Lemmas \ref{lemma_psi_bound} and \ref{lemma_gamma_cgce}, we obtain $\sup_{\theta \in G} \sup_{x \in \mathcal{X}} |\Upsilon_{k,0,x}(\theta) - \Upsilon_{k,\infty}(\theta)| \leq (K_2+B_0) k^2 \rho^{\lfloor k/4(p+1)\rfloor} A_{k,0} + K(k^{3}+D_0) \rho^{\lfloor k/16(p+1)\rfloor} C_{k,0}$. Furthermore, the sum over finitely many $\sup_{\theta \in G} \sup_{x \in \mathcal{X}} |\Upsilon_{k,0,x}(\theta) - \Upsilon_{k,\infty}(\theta)| $ is $o(n)$ $\pstar$-a.s.\ because $\estar \sup_{\theta \in G}\sup_{x \in \mathcal{X}} |\Upsilon_{k,0,x}(\theta)| < \infty$ and $\estar \sup_{\theta \in G} |\Upsilon_{k,\infty}(\theta)| < \infty$ from Assumption \ref{assn_nabla_moment}. Therefore, we have $\sup_{\theta \in G}\sup_{x \in \mathcal{X}} |n^{-1}\nabla_\theta^2 l_n(\theta,x) - n^{-1}\sum_{k=1}^n \Upsilon_{k,\infty}(\theta) | = o_p(1)$.

Consequently, it suffices to show that
\begin{equation} \label{psi_lln}
\sup_{\theta \in G} \left| n^{-1}\sum_{k=1}^n \Upsilon_{k,\infty}(\theta) - \mathbb{E}_{\theta^*}[\Upsilon_{0,\infty}(\theta)] \right| \to_p 0.
\end{equation}
Because $G$ is compact, (\ref{psi_lln}) holds if, for all $\theta \in G$,
\begin{align}
& n^{-1}\sum_{k=1}^n \Upsilon_{k,\infty}(\theta)- \mathbb{E}_{\theta^*}[\Upsilon_{0,\infty}(\theta)] \to_p 0, \label{psi_psi_2} \\
& \lim_{\delta \to 0} \lim_{n \to \infty} \sup_{|\theta'-\theta| \leq \delta} \left| n^{-1}\sum_{k=1}^n \Upsilon_{k,\infty}(\theta')- n^{-1}\sum_{k=1}^n \Upsilon_{k,\infty}(\theta) \right| =0 \quad \mathbb{P}_{\theta^*}{\text -a.s.} \label{psi_psi_1} 
\end{align}
(\ref{psi_psi_2}) holds by ergodic theorem. Note that the left hand side of (\ref{psi_psi_1}) is bounded by \\$\lim_{\delta \to 0}\lim_{n \to \infty}n^{-1}\sum_{k=1}^n \sup_{|\theta'-\theta| \leq \delta} |\Upsilon_{k,\infty}(\theta') -\Upsilon_{k,\infty}(\theta) |$, which equals \\ $\lim_{\delta \to 0}\mathbb{E}_{\theta^*} \sup_{|\theta'-\theta| \leq \delta} |\Upsilon_{0,\infty}(\theta') -\Upsilon_{0,\infty}(\theta) |$ $\mathbb{P}_{\theta^*}$-a.s.\ from ergodic theorem. Therefore, (\ref{psi_psi_1}) holds if
\begin{equation} \label{psi_psi_3}
\lim_{\delta \to 0} \estar \sup_{|\theta'-\theta| \leq \delta} \left|\Upsilon_{0,\infty}(\theta') - \Upsilon_{0,\infty}(\theta) \right| =0.
\end{equation}
Fix a point $x_0 \in \mathcal{X}$. The left hand side of (\ref{psi_psi_3}) is bounded by $2A_m+C_m$, where
\[
A_m:= \estar \sup_{\theta \in G} \left|\Upsilon_{0,m,x_0}(\theta) - \Upsilon_{0,\infty}(\theta) \right|, \quad
C_m := \lim_{\delta \to 0} \estar \sup_{|\theta'-\theta| \leq \delta} \left|\Upsilon_{0,m,x_0}(\theta') - \Upsilon_{0,m,x_0}(\theta) \right|.
\]
From Lemmas \ref{lemma_psi_bound} and \ref{lemma_gamma_cgce}, $\sup_{\theta \in G} |\Upsilon_{0,m,x_0}(\theta) - \Upsilon_{0,\infty}(\theta) | \to_p 0$ as $m \to \infty$. Furthermore, we have $\estar \sup_{m \geq 1} \sup_{\theta \in G} |\Upsilon_{0,m,x_0}(\theta) |< \infty$ and $\estar \sup_{\theta \in G} |\Upsilon_{0,\infty}(\theta)|<\infty$ from Assumption \ref{assn_nabla_moment}. Therefore, $A_{m} \to 0$ as $m \to \infty$ by the dominated convergence theorem \citep[][Exercise 2.3.7]{durrett10book}. $C_{m}=0$ from Lemma \ref{lemma_14_dmr} if $m \geq p$. Therefore, (\ref{psi_psi_3}) holds, and the stated result is proven.
\end{proof}

\begin{proof}[Proof of Proposition \ref{prop_distn}]
In view of (\ref{taylor}) and Propositions \ref{prop_consistency}, \ref{prop_score}, and \ref{prop_hessian}, part (a) holds if (i) $\mathbb{E}_{\theta^*}[\Psi_{0,\infty}^{2}(\theta) +\Gamma_{0,\infty}(\theta)]$ is continuous in $\theta \in G$ and (ii) $\mathbb{E}_{\theta^*}[\Psi_{0,\infty}^{2}(\theta^*) +\Gamma_{0,\infty}(\theta^*)] = -I(\theta^*)$. (i) follows from (\ref{psi_psi_3}). For (ii), it follows from the Louis information principle and information matrix equality that, for all $m \geq 1$, $\mathbb{E}_{\theta^*}[\Psi_{0,m}^{1}(\theta^*)(\Psi_{0,m}^{1}(\theta^*))'] = - \mathbb{E}_{\theta^*}[\Psi_{0,m}^{2}(\theta^*) +\Gamma_{0,m}(\theta^*)]$. From Lemmas \ref{lemma_psi_bound} and \ref{lemma_gamma_cgce}, Assumption \ref{assn_nabla_moment}, and the dominated convergence theorem, the left hand side converges to $\mathbb{E}_{\theta^*}[\Psi_{0,\infty}^{1}(\theta^*)(\Psi_{0,\infty}^{1}(\theta^*))'] = I(\theta^*)$, and the right hand side converges to $- \mathbb{E}_{\theta^*}[\Psi_{0,\infty}^{2}(\theta^*) +\Gamma_{0,\infty}(\theta^*)]$. Therefore, (ii) holds, and part (a) is proven.

For part (b), an elementary calculation gives, with $p_{n\theta}(x)$ denoting $p_\theta({\bf Y}_1^n|\overline{\bf Y}_{0},{\bf W}_{0}^n, x)$,
\begin{align*}
n^{-1}\nabla_\theta^2 l_n(\theta,\xi) & = \frac{ \int n^{-1} \nabla_\theta^2 \log p_{n\theta}(x_0) p_{n\theta}(x_0) \xi(dx_0)}{\int p_{n\theta}(x_0)\xi(dx_0)} + \frac{ \int (n^{-1/2} \nabla_\theta \log p_{n\theta}(x_0))^2 p_{n\theta}(x_0) \xi(dx_0)}{\int p_{n\theta}(x_0)\xi(dx_0)} \\
& - \left(\frac{ \int n^{-1/2} \nabla_\theta \log p_{n\theta}(x_0) p_{n\theta}(x_0) \xi(dx_0)}{\int p_{n\theta}(x_0)\xi(dx_0)}\right)^2.
\end{align*}
The sum of the last two terms is $o_p(1)$ because $\sup_{x \in \mathcal{X}}\sup_{\theta \in G} | n^{-1/2}\nabla_\theta \log p_{n\theta}(x) - n^{-1/2}\sum_{k=1}^n \Psi_{k,\infty}^1(\theta)|=o_p(1)$. Therefore, for any $\xi$ on $\mathcal{B}(\mathcal{X})$, we have  $\sup_{x_0 \in \mathcal{X}}\sup_{\theta \in G}| n^{-1}\nabla_\theta^2 l_n(\theta,\xi) -n^{-1}\nabla_\theta^2 l_n(\theta,x_0) |=o_p(1)$ holds, and part (b) follows.
\end{proof}
\section{Auxiliary results}

Lemma 1 of DMR derives the minorization condition \citep{rosenthal95jasa} on the conditional hidden Markov chain  when $p=1$ and the covariate $W_k$ is absent.  This lemma generalizes Lemma 1 of DMR to accommodate $p \geq 2$ and covariate $W_k$.\footnote{We replace the conditioning variable $\overline{\bf Y}_m^n$ in DMR with $\overline{\bf Y}_{-m}^n$, because the subsequent analysis uses $\overline{\bf Y}_{-m}^n$.} When $p \geq 2$, the minorization coefficient $\omega(\cdot)$ depends on $(\overline{\bf Y}_{k-p}^{k-1},{\bf W}_{k-p}^{k-1})$ because $\overline{\bf Y}_{k-p}^{k-1}$ provide information on $X_k$ in addition to the information provided by $X_{k-p}$.
\begin{lemma} \label{lemma_dmr_1}
Assume Assumptions \ref{assn_a1}--\ref{assn_a3}. Let $m, n \in \mathbb{Z}$ with $-m \leq n$. Then,  the following holds for all $\theta \in \Theta$;  (a) under $\mathbb{P}_\theta$, conditionally on $(\overline{\bf Y}_{-m}^n, {\bf W}_{-m}^n)$, $\{X_k\}_{k=-m}^n$ is an inhomogeneous Markov chain, and (b) for all $-m+p \leq k \leq n$, there exists a function $\mu_{k,\theta}(\overline{\bf y}_{k-1}^n,{\bf w}_{k}^n, A)$ such that
\begin{enumerate}
\item[(i)] For any $A \in \mathcal{B}(\mathcal{X})$, $\mu_{k,\theta}(\cdot,\cdot,A)$ is Borel measurable function defined on $\mathcal{Y}^{n-k+s+1} \times \mathcal{W}^{n-k+1}$;
\item[(ii)] For any $(\overline{\bf y}_{k-1}^n,{\bf w}_{k}^n)$, $\mu_{k,\theta}(\overline{\bf y}_{k-1}^n,{\bf w}_{k}^n, \cdot)$ is a probability measure on $\mathcal{B}(\mathcal{X})$. Furthermore, \\$\mu_{k,\theta}(\overline{\bf y}_{k-1}^n,{\bf w}_{k}^n, \cdot)$  is absolutely continuous with respect to  $\mu$ for all $(\overline{\bf y}_{k-1}^n,{\bf w}_{k}^n)$, and, for all $(\overline{\bf y}_{-m}^n,{\bf w}_{-m}^n)$,
\[
\inf_{x \in \mathcal{X}} \mathbb{P}_\theta \left(X_k \in A \middle| X_{k-p}= x, \overline{\bf y}_{-m}^n, {\bf w}_{-m}^n \right) \geq \omega(\overline{\bf y}_{k-p}^{k-1}, {\bf w}_{k-p}^{k-1}) \mu_{k,\theta}(\overline{\bf y}_{k-1}^n,{\bf w}_{k}^n, A),
\] 
with $\omega(\overline{\bf y}_{k-p}^{k-1},{\bf w}_{k-p}^{k-1})$ defined in (\ref{omega_defn}).
\end{enumerate}
\end{lemma}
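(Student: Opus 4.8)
The plan is to adapt the forward--backward argument behind Lemma~1 of DMR in two respects: since $q_\theta$ may now vanish I replace the one-step kernel by the $p$-step kernel $q_\theta^p$ (which is bounded away from $0$ by Assumption~\ref{assn_a1}(d)), and I carry the intermediate emission densities together with the covariate $W_k$ through all the conditioning. Part~(a) asserts that the hidden chain stays Markov after conditioning on the observations, and part~(b) is the minorization built on the positive kernel $q_\theta^p$.

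For part~(a) I would factor the conditional density of $({\bf X}_{-m}^n,\overline{\bf Y}_{-m}^n)$ given ${\bf W}_{-m}^n$. Assumptions~\ref{assn_a2}(a)--(c) yield the product form $\prod_i q_\theta(x_{i-1},x_i)\,g_\theta(y_i|\overline{\bf y}_{i-1},x_i,w_i)$, so a direct Bayes computation shows that the conditional law of $X_k$ given $({\bf X}_{-m}^{k-1},\overline{\bf y}_{-m}^n,{\bf w}_{-m}^n)$ depends on ${\bf X}_{-m}^{k-1}$ only through $X_{k-1}$; this is the claimed inhomogeneous conditional Markov property.

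For part~(b), fix $-m+p\le k\le n$ and condition on $X_{k-p}=x$. By part~(a) and the Markov structure the relevant observations reduce to $(\overline{\bf y}_{k-p}^n,{\bf w}_{k-p+1}^n)$, and the forward--backward factorization makes the $\mu$-density of $X_k$ proportional in $x_k$ to $T_k(x,x_k)\,\beta_k(x_k)$, where
\[
T_k(x,x_k):=\int\cdots\int\Big[\prod_{i=k-p+1}^{k}q_\theta(x_{i-1},x_i)\Big]\Big[\prod_{i=k-p+1}^{k-1}g_\theta(y_i|\overline{\bf y}_{i-1},x_i,w_i)\Big]\prod_{i=k-p+1}^{k-1}\mu(dx_i),
\]
with $x_{k-p}=x$, and $\beta_k(x_k)$ is the likelihood of $(y_k,\dots,y_n)$ given $X_k=x_k$, which is a function of $(\overline{\bf y}_{k-1}^n,{\bf w}_k^n)$ alone. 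I then set $\mu_{k,\theta}(\overline{\bf y}_{k-1}^n,{\bf w}_k^n,A):=\big(\int_A\beta_k\,d\mu\big)/\big(\int_{\mathcal X}\beta_k\,d\mu\big)$, whose normalizer lies in $(0,\infty)$ by Assumption~\ref{assn_a3}, so properties (i) and (ii) follow at once. The minorization then reduces to bounding $T_k$: bounding the intermediate emission product pointwise in ${\bf x}_{k-p+1}^{k-1}$ by its infimum $G_-$ and supremum $G_+$ over $({\bf x}_{k-p+1}^{k-1},\theta)$ (the numerator and denominator of (\ref{omega_defn})) and using Chapman--Kolmogorov gives $G_-\,q_\theta^p(x,x_k)\le T_k(x,x_k)\le G_+\,q_\theta^p(x,x_k)$, whence $\sigma_-G_-\le T_k\le\sigma_+G_+$ uniformly in $x,x_k$ by Assumption~\ref{assn_a1}(d). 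Substituting these two-sided bounds into the ratio $\big(\int_A T_k\beta_k\,d\mu\big)/\big(\int_{\mathcal X}T_k\beta_k\,d\mu\big)$ and cancelling $\beta_k$ gives $\inf_x\mathbb P_\theta(X_k\in A\mid\cdots)\ge(\sigma_-/\sigma_+)(G_-/G_+)\,\mu_{k,\theta}(\cdots,A)$, which is at least $\omega(\overline{\bf y}_{k-p}^{k-1},{\bf w}_{k-p}^{k-1})\,\mu_{k,\theta}(\cdots,A)$ since $G_-/G_+\le1$, establishing the stated (square) form a fortiori.

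The main obstacle is exactly the presence of the intermediate observations that $p\ge2$ forces into $T_k$. Because $q_\theta$ need not be bounded below, the minorization is only available after composing $p$ steps, and this unavoidably couples the emissions $g_\theta(y_i|\cdot)$, $k-p+1\le i\le k-1$, into the forward object; these cannot be controlled uniformly in the observations, which is why $\omega$ is random and depends on $(\overline{\bf y}_{k-p}^{k-1},{\bf w}_{k-p}^{k-1})$. The delicate point is to bound this emission product pointwise in the hidden states while keeping the resulting bounds $G_-,G_+$ functions of the observations only (uniform over $x$ and $\theta$) --- precisely the role of Assumption~\ref{assn_a3} --- and to check that assigning the emission $g_\theta(y_k|\cdots)$ to $\beta_k$ rather than to $T_k$ leaves $\mu_{k,\theta}$ a function of $(\overline{\bf y}_{k-1}^n,{\bf w}_k^n)$ only, as required by the statement.
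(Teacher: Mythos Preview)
Your argument is correct and follows essentially the same forward--backward strategy as the paper: both proofs establish the conditional Markov property in part~(a) by the product factorization from Assumptions~\ref{assn_a2}(a)--(c), and both define $\mu_{k,\theta}$ as the normalized backward likelihood $p_\theta({\bf y}_k^n|X_k=x,\overline{\bf y}_{k-1},{\bf w}_k^n)$.

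The one substantive difference is in how the forward term is handled. The paper first normalizes it into the conditional density $p_\theta(x_k|x_{k-p},\overline{\bf y}_{k-p}^{k-1},{\bf w}_{k-p}^{k-1})$ and then bounds this density above and below (equation~(\ref{p_p_bound}) and the line following it); because the normalizing constant must itself be bounded using the same $G_-,G_+$, the paper obtains lower and upper bounds $\sigma_-G_-/G_+$ and $\sigma_+G_+/G_-$, whose ratio gives the squared factor $(G_-/G_+)^2$ in $\omega$. You instead bound the \emph{unnormalized} $T_k$ directly by $\sigma_-G_-\le T_k\le\sigma_+G_+$ and insert these into the final ratio, which yields the sharper minorization constant $(\sigma_-/\sigma_+)(G_-/G_+)$ without the square. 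Your a~fortiori step then recovers the stated $\omega$. So your route is marginally cleaner and in fact produces a strictly larger minorization coefficient; the paper's formulation with the square is what propagates through the rest of the analysis, but nothing downstream would be harmed by your improvement.
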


\begin{proof}
The proof uses a similar argument to the proof of Lemma 1 in DMR. Because $\{Z_k\}_{k=-m}^{n}$ is a Markov chain given $\{W_k\}_{k=-m}^{n}$, we have, for $-m < k \leq n$, 
\[
\mathbb{P}_\theta(X_k \in A| {\bf X}^{k-1}_{-m}, \overline{\bf Y}^n_{-m}, {\bf W}_{-m}^n) = \mathbb{P}_\theta(X_k \in A| X_{k-1}, \overline{\bf Y}_{k-1}^n, {\bf W}_{k}^n).
\]
Therefore, $\{X_k\}_{k =-m}^n$ conditional on $(\overline{\bf Y}^n_{-m}, {\bf W}_{-m}^n)$ is an inhomogeneous Markov chain, and part (a) follows.

We proceed to prove part (b). Observe that if $-m+p \leq k \leq n$, 
\begin{equation} \label{x_a}
\mathbb{P}_\theta (X_k \in A|x_{k-p}, \overline{\bf y}^n_{-m}, {\bf w}_{-m}^n) = \mathbb{P}_\theta (X_k \in A|x_{k-p}, \overline{\bf y}_{k-p}^n, {\bf w}_{k-p}^n),
\end{equation}
because the left hand side of (\ref{x_a}) can be written as
\begin{align*}
\frac{\mathbb{P}_\theta (X_k \in A, {\bf y}^n_{k-p+1}|x_{k-p}, \overline{\bf y}^{k-p}_{-m}, {\bf w}_{-m}^n)}{\mathbb{P}_\theta ({\bf y}^n_{k-p+1}|x_{k-p}, \overline{\bf y}^{k-p}_{-m}, {\bf w}_{-m}^n)} & = \frac{\mathbb{P}_\theta (X_k \in A, {\bf y}^n_{k-p+1}|x_{k-p}, \overline{\bf y}_{k-p}, {\bf w}_{k-p}^n)}{\mathbb{P}_\theta ({\bf y}^n_{k-p+1}|x_{k-p}, \overline{\bf y}_{k-p}, {\bf w}_{k-p}^n)}.
\end{align*}
The equality (\ref{x_a}) holds even when the conditioning variable ${\bf w}_{k-p}^n$ on the right hand side is replaced with ${\bf w}_{k-p+1}^n$, but we use ${\bf w}_{k-p}^n$ for notational simplicity. Write the right hand side of (\ref{x_a}) as
\begin{align*}
& \mathbb{P}_\theta (X_k \in A|x_{k-p}, \overline{\bf y}_{k-p}^n, {\bf w}_{k-p}^n) \\
& = \frac{\int_A {p}_\theta (X_k= x, {\bf y}_k^n|x_{k-p}, \overline{\bf y}^{k-1}_{k-p}, {\bf w}_{k-p}^n) \mu(dx) }{p_\theta ({\bf y}_k^n|x_{k-p}, \overline{\bf y}^{k-1}_{k-p}, {\bf w}_{k-p}^n)} \\
& = \int_A p_\theta(X_{k}= x|x_{k-p},\overline{\bf y}^{k-1}_{k-p}, {\bf w}_{k-p}^{k-1}) p_\theta ({\bf y}_k^n|X_{k}= x,\overline{\bf y}_{k-1}, {\bf w}_{k}^n) \mu(dx) \\
& \times \left(\int_{\mathcal{X}} p_\theta(X_{k}= x|x_{k-p},\overline{\bf y}^{k-1}_{k-p}, {\bf w}_{k-p}^{k-1}) p_\theta ({\bf y}_k^n|X_{k}= x,\overline{\bf y}_{k-1}, {\bf w}_{k}^n) \mu(dx) \right)^{-1}.
\end{align*} 

When $p=1$, we have $p_\theta(x_{k}|x_{k-p},\overline{\bf y}^{k-1}_{k-p}, {\bf w}_{k-p}^{k-1}) = q_\theta(x_{k-1}, x_{k}) \in [\sigma_-,\sigma_+]$. Therefore, the stated result follows with $\mu_{k,\theta}(\overline{\bf y}_{k-1}^n,{\bf w}_k^n, A)$ defined as
\begin{equation}\label{mu_k}
\mu_{k,\theta}(\overline{\bf y}_{k-1}^n,{\bf w}_k^n, A) := \int_A p_\theta({\bf y}_k^n| X_{k}=x, \overline{\bf y}_{k-1}, {\bf w}_k^n) \mu(dx) \Big/\int_{\mathcal{X}} p_\theta({\bf y}_k^n| X_{k}=x, \overline{\bf y}_{k-1}, {\bf w}_k^n) \mu(dx).
\end{equation}
Note that $\int_{\mathcal{X}} p_\theta({\bf y}_k^n| X_k=x, \overline{\bf y}_{k-1},{\bf w}_k^n) \mu(dx)>0$ from Assumption \ref{assn_a3}. 

When $p \geq 2$, a lower bound on $p_\theta(x_{k}|x_{k-p},\overline{\bf y}^{k-1}_{k-p}, {\bf w}_{k-p}^{k-1})$ is obtained as
\begin{align}
& p_\theta(x_k| x_{k-p}, \overline{\bf y}_{k-p}^{k-1}, {\bf w}_{k-p}^{k-1}) \nonumber \\
& = \frac{p_\theta(x_{k},{\bf y}_{k-p+1}^{k-1}| x_{k-p}, \overline{\bf y}_{k-p}, {\bf w}_{k-p}^{k-1})}{ p_\theta({\bf y}_{k-p+1}^{k-1}| x_{k-p}, \overline{\bf y}_{k-p}, {\bf w}_{k-p}^{k-1})} \nonumber \\
& = \frac{\int \prod_{i=k-p+1}^{k} q_\theta(x_{i-1},x_i) \prod_{i=k-p+1}^{k-1} g_\theta(y_i|\overline{\bf y}_{i-1},x_i,w_i) \mu^{\otimes (p-1)} (d{\bf x}^{k-1}_{k-p+1}) }{\int \prod_{i=k-p+1}^{k} q_\theta(x_{i-1},x_i) \prod_{i=k-p+1}^{k-1} g_\theta(y_i|\overline{\bf y}_{i-1},x_i,w_i) \mu^{\otimes p} (d{\bf x}^{k}_{k-p+1}) } \nonumber \\
& \geq \frac{ \inf_\theta\inf_{x_{k-p},x_{k}} q_{\theta}^p(x_{k-p},x_k) \inf_\theta\inf_{{\bf x}^{k-1}_{k-p+1}} \prod_{i=k-p+1}^{k-1} g_\theta(y_i|\overline{\bf y}_{i-1},x_i,w_i) }{  \sup_\theta\sup_{{\bf x}^{k-1}_{k-p+1}} \prod_{i=k-p+1}^{k-1} g_\theta(y_i|\overline{\bf y}_{i-1},x_i,w_i) } \label{p_p_bound}.
\end{align} 
Similarly, an upper bound on $p_\theta(x_k| x_{k-p}, \overline{\bf y}_{k-p}^{k-1}, {\bf w}_{k-p}^{k-1})$ is given by  
\[
\frac{ \sup_\theta\sup_{x_{k-p},x_{k}} q_{\theta}^p(x_{k-p},x_k) \sup_\theta\sup_{{\bf x}^{k-1}_{k-p+1}} \prod_{i=k-p+1}^{k-1} g_\theta(y_i|\overline{\bf y}_{i-1},x_i,w_i) }{ \inf_\theta\inf_{{\bf x}^{k-1}_{k-p+1}} \prod_{i=k-p+1}^{k-1} g_\theta(y_i|\overline{\bf y}_{i-1},x_i,w_i) }.
\]
Therefore, the stated result holds with $\mu_{k,\theta}(\overline{\bf y}_{k-1}^n,{\bf w}_k^n, A)$ defined in (\ref{mu_k}).
\end{proof}

The following lemma provides the convergence rate of a Markov chain $X_t$. When $X_t$ is time-homogeneous, this result has been proven by Theorem 1 of \citet{rosenthal95jasa}. This lemma extends \citet{rosenthal95jasa} to time-inhomogeneous $X_t$.

\begin{lemma} \label{lemma_coupling}
Let $\{X_t\}_{t \geq 1}$ be a Markov process that lies in $\mathcal{X}$, and let $P_t(x,A) :=\mathbb{P}( X_{t} \in A| X_{t-1} = x)$. Suppose there is a probability measure $Q_t(\cdot)$ on $\mathcal{X}$, a positive integer $p$, and $\varepsilon_t \geq 0$ such that
\[
P_t^{p}(x,A):= \mathbb{P}(X_{t} \in A|X_{t-p}=x) \geq \varepsilon_t Q_t(A),
\]
for all $x \in \mathcal{X}$ and all measurable subsets $A \subset \mathcal{X}$. Let $X_0$ and $Y_0$ be chosen from the initial distributions $\pi_1$ and $\pi_2$, respectively, and update them according to $P_t(x,A)$. Then,
\[
\|\mathbb{P}(X_k \in \cdot) - \mathbb{P}(Y_k\in \cdot)\|_{TV} \leq \prod_{i=1}^{\lfloor k/p \rfloor} (1-\varepsilon_{ip}).
\]
\end{lemma}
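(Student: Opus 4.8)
The plan is to prove this by a coupling argument applied to the $p$-step skeleton of the chain, combined with Nummelin-type splitting, which generalizes the time-homogeneous construction of \citet{rosenthal95jasa} cited just above. First I would pass to the embedded chain observed at the regeneration times $ip$, $i=0,1,\ldots,\lfloor k/p\rfloor$: since the minorization hypothesis is stated for the $p$-step kernel $P_t^{p}$, the natural object is the pair $(X_{ip},Y_{ip})$, whose transition in the index $i$ (from time $(i-1)p$ to time $ip$) is governed by $P_{ip}^{p}$ and satisfies $P_{ip}^{p}(x,\cdot)\geq \varepsilon_{ip}Q_{ip}(\cdot)$ for all $x$.

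The key algebraic step is to split each $p$-step kernel. For each $i$ with $\varepsilon_{ip}<1$, write
\begin{equation*}
P_{ip}^{p}(x,A)=\varepsilon_{ip}Q_{ip}(A)+(1-\varepsilon_{ip})R_{ip}(x,A),\qquad R_{ip}(x,A):=\frac{P_{ip}^{p}(x,A)-\varepsilon_{ip}Q_{ip}(A)}{1-\varepsilon_{ip}}.
\end{equation*}
The minorization guarantees $R_{ip}(x,\cdot)\geq 0$, and because $Q_{ip}$ is a probability measure and $P_{ip}^{p}(x,\mathcal{X})=1$, we obtain $R_{ip}(x,\mathcal{X})=1$, so $R_{ip}$ is a genuine Markov kernel (when $\varepsilon_{ip}=1$ the residual term is absent). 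Checking that $R_{ip}$ is a bona fide probability kernel is the only point at which the hypotheses are used directly, and it is elementary.

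Next I would build the coupling recursively in $i$, carrying a coalescence flag. While the two copies have not yet coalesced, at block $i$ I flip an independent coin equal to $1$ with probability $\varepsilon_{ip}$: on the event that the coin equals $1$ I set $X_{ip}=Y_{ip}$ to a common draw from $Q_{ip}$ (the copies have coalesced), and on the event that the coin equals $0$ I advance $X_{ip}$ and $Y_{ip}$ from their respective current states $X_{(i-1)p}$ and $Y_{(i-1)p}$ using $R_{ip}$. Once coalescence occurs, I drive both copies with identical randomness so they remain equal thereafter. The intermediate coordinates $X_{(i-1)p+1},\ldots,X_{ip-1}$ (and likewise for $Y$) are filled in from the conditional law of the bridge joining the skeleton states; this does not affect the endpoint marginals, which is all that the total variation distance at time $k$ depends on. By the splitting identity, the marginal transition across block $i$ is $\varepsilon_{ip}Q_{ip}(\cdot)+(1-\varepsilon_{ip})R_{ip}(x,\cdot)=P_{ip}^{p}(x,\cdot)$, so each marginal reproduces the prescribed law of $\{X_t\}$ started from $\pi_1$ and of $\{Y_t\}$ started from $\pi_2$.

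Finally I would combine the coupling inequality $\|\mathbb{P}(X_k\in\cdot)-\mathbb{P}(Y_k\in\cdot)\|_{TV}\leq \mathbb{P}(X_k\neq Y_k)$ with the coalescence bound. Since the coalescence times $p,2p,\ldots,\lfloor k/p\rfloor p$ all satisfy $ip\leq k$ and the copies stay equal after coalescing, the event $\{X_k\neq Y_k\}$ forces every one of the $\lfloor k/p\rfloor$ coins to have come up $0$; conditioning successively block by block and using that the $i$th coin succeeds with probability $\varepsilon_{ip}$ given non-coalescence through block $i-1$, this probability equals $\prod_{i=1}^{\lfloor k/p\rfloor}(1-\varepsilon_{ip})$, which gives the claim. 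The main obstacle is purely bookkeeping: because the kernels are inhomogeneous (unlike the setting of \citet{rosenthal95jasa}), I must be careful that the skeleton construction yields the correct path marginals, but since the bound at time $k$ involves only endpoint marginals and the regular block structure, the inhomogeneity introduces no genuine difficulty.
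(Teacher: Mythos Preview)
Your proposal is correct and essentially identical to the paper's proof: both carry out the Nummelin splitting of the $p$-step kernel into $\varepsilon_{ip}Q_{ip}+(1-\varepsilon_{ip})R_{ip}$, flip a coin with success probability $\varepsilon_{ip}$ at each block to decide between common regeneration from $Q_{ip}$ or independent updates from the residual $R_{ip}$, fill in the intermediate coordinates via the conditional bridge, and then combine the coupling inequality with the product bound on the probability that all $\lfloor k/p\rfloor$ coins fail. The only cosmetic difference is that the paper indexes the construction by the time $t$ rather than the block index $i$, but the substance is the same.
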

\begin{proof}
The proof follows the line of argument in the proof of Theorem 1 of \citet{rosenthal95jasa}. Starting from $(X_0,Y_0)$, we let $X_t$ and $Y_t$ for $t \geq 1$ progress as follows. Given the value of $X_{t}$ and $Y_{t}$, flip a coin with the probability of heads equal to $\varepsilon_{t+p}$. If the coin comes up heads, then choose a point $x \in \mathcal{X}$ according to $Q_{t+p}(\cdot)$ and set $X_{t+p}=Y_{t+p}=x$, choose $(X_{t+1},\ldots,X_{t+p-1})$ and $(Y_{t+1},\ldots,Y_{t+p-1})$ independently according to the transition kernel $P_{t+1}(x_{t+1}|x_{t}),\ldots,P_{t+p-1}(x_{t+p-1}|x_{t+p-2})$ conditional on $X_{t+p}=x$ and $Y_{t+p}=x$, and update the processes after $t+p$ so that they remain equal for all future time. If the coin comes up tails, then choose $X_{t+p}$ and $Y_{t+p}$ independently according to the distributions $(P_{t+p}^{p}(X_{t},\cdot)-\varepsilon_{t+p} Q_{t+p}(\cdot))/(1-\varepsilon_{t+p})$ and $(P_{t+p}^{p}(Y_{t},\cdot)-\varepsilon_{t+p} Q_{t+p}(\cdot))/(1-\varepsilon_{t+p})$, respectively, and choose $(X_{t+1},\ldots,X_{t+p-1})$ and $(Y_{t+1},\ldots,Y_{t+p-1})$ independently according to the transition kernel $P_{t+1}(x_{t+1}|x_{t}),\ldots,P_{t+p-1}(x_{t+p-1}|x_{t+p-2})$ conditional on the value of $X_{t+p}$ and $Y_{t+p}$. It is easily checked that $X_t$ and $Y_t$ are each marginally updated according to the transition kernel $P_{t}(x,A)$.

Furthermore, $X_{t}$ and $Y_{t}$ are coupled the first time (call it $T$) when we choose $X_{t+p}$ and $Y_{t+p}$ both from $Q_{t+p}(\cdot)$ as earlier. It now follows from the coupling inequality that
\[
\|\mathbb{P}(X_k \in \cdot) - \mathbb{P}(Y_k \in \cdot)\|_{TV} \leq \mathbb{P}(X_k \neq Y_k) \leq \mathbb{P}(T >k).
\]
By construction, when $t$ is a multiple of $p$, $X_t$ and $Y_t$ will couple with probability $\varepsilon_{t}$. Hence,
\[
\mathbb{P}(T > k) \leq (1-\varepsilon_{p}) \cdots (1-\varepsilon_{ \lfloor k/p \rfloor \cdot p}),
\]
and the stated result follows.
\end{proof}

The following lemma corresponds to Lemma 4 of DMR and implies that $\mathbb{E}_{\theta^*}\left[\Delta_{0,\infty}(\theta) \right]$ is continuous in $\theta$. This lemma is used in the proof of the consistency of the MLE. 
\begin{lemma}\label{lemma_4_dmr}
Assume Assumptions \ref{assn_a1}--\ref{assn_consis}. Then, for all $\theta \in \Theta$, 
\[
\lim_{\delta \to 0} \mathbb{E}_{\theta^*}\left[ \sup_{|\theta -\theta'| \leq \delta}|\Delta_{0,\infty}(\theta')-\Delta_{0,\infty}(\theta)| \right] =0.
\] 
\end{lemma}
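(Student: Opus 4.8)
The plan is to establish this $L^1$-continuity by reducing it to two ingredients: (i) the map $\theta \mapsto \Delta_{0,\infty}(\theta)$ is continuous $\pstar$-a.s., and (ii) $\Delta_{0,\infty}(\cdot)$ admits an integrable envelope. Given these, I would interchange $\lim_{\delta\to0}$ with $\estar$ by dominated convergence. The point is that for fixed $\theta$ the quantity $\sup_{|\theta'-\theta| \leq \delta}|\Delta_{0,\infty}(\theta')-\Delta_{0,\infty}(\theta)|$ is nonnegative and nonincreasing as $\delta\downarrow 0$, so a.s.\ continuity of $\Delta_{0,\infty}(\cdot)$ at $\theta$ forces it to converge to $0$ $\pstar$-a.s.; the integrable envelope then licenses passing this limit through the expectation.

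To obtain the a.s.\ continuity in (i), I would first argue that each $\Delta_{0,m,x}(\theta)$ is continuous in $\theta$. The density $p_\theta(Y_0|\overline{\bf Y}^{-1}_{-m},{\bf W}^0_{-m},X_{-m}=x)$ is a ratio of integrals over ${\bf x}_{-m+1}^0 \in \mathcal{X}^{m+1}$ of products $\prod q_\theta \prod g_\theta$ against the finite measure $\mu$; by Assumption \ref{assn_consis}(a) each factor is continuous in $\theta$ for fixed states, and the integrands are bounded by $\sigma_+^0$ and $b_+(\overline{\bf Y}^0_{-1},W_0) < \infty$, so dominated convergence inside the integral makes the numerator and denominator continuous in $\theta$. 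The denominator is continuous and strictly positive on the compact set $\Theta$ (by the positivity in Assumptions \ref{assn_a1}(d) and \ref{assn_a3}), hence bounded away from $0$, so the ratio and its logarithm are continuous in $\theta$. Next, setting $k=0$ and letting $m'\to\infty$ in Lemma \ref{lemma3_dmr}(a) gives $\sup_{x\in\mathcal{X}}\sup_{\theta\in\Theta}|\Delta_{0,m,x}(\theta)-\Delta_{0,\infty}(\theta)| \leq A_{0,m}\rho^{\lfloor m/3p \rfloor}$, and since $\pstar(A_{0,m} \geq M\ \text{ i.o.})=0$ this bound tends to $0$ $\pstar$-a.s.\ as $m\to\infty$. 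Thus, on a probability-one event, $\Delta_{0,\infty}(\cdot)$ is the uniform-in-$\theta$ limit of the continuous functions $\Delta_{0,m,x}(\cdot)$ and is therefore continuous on $\Theta$.

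For the envelope in (ii), Lemma \ref{lemma3_dmr}(c) gives $\sup_{m\geq 0}\sup_{x}\sup_{\theta}|\Delta_{0,m,x}(\theta)| \leq B_0$ with $B_0 \in L^1(\pstar)$; passing to the limit in $m$ yields $\sup_{\theta\in\Theta}|\Delta_{0,\infty}(\theta)| \leq B_0$, so
\[
\sup_{|\theta'-\theta| \leq \delta}|\Delta_{0,\infty}(\theta')-\Delta_{0,\infty}(\theta)| \leq 2B_0 \in L^1(\pstar)
\]
uniformly in $\delta$. Since $\Theta\subset\mathbb{R}^q$ is separable and $\Delta_{0,\infty}(\cdot)$ is a.s.\ continuous, the supremum may be taken over a countable dense subset and is therefore measurable. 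Combining the a.s.\ continuity established above with this envelope, the dominated convergence theorem allows me to pass $\lim_{\delta\to0}$ through $\estar$, yielding the stated limit of $0$.

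The main obstacle is the a.s.\ continuity of the infinite-past log-density $\Delta_{0,\infty}(\cdot)$: it is defined only as an almost-sure limit rather than by an explicit formula, so continuity cannot be read off directly and must instead be inherited from the \emph{uniform} (in $\theta$) convergence supplied by Lemma \ref{lemma3_dmr}(a). Once this uniform-convergence-of-continuous-functions argument is secured---together with the routine interchange of limit and integral over $\mathcal{X}^{m+1}$ used to show continuity of each $\Delta_{0,m,x}$---the rest is a standard dominated-convergence argument based on the $L^1$ envelope $B_0$.
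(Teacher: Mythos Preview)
Your proposal is correct and follows essentially the same route as the paper's proof: establish continuity of $\Delta_{0,m,x}(\theta)$ via dominated convergence on the ratio representation, transfer this to $\Delta_{0,\infty}(\theta)$ using the uniform-in-$\theta$ convergence from Lemma~\ref{lemma3_dmr}(a), and conclude by dominated convergence with the $L^1$ envelope $B_0$ from Lemma~\ref{lemma3_dmr}(c). The only small imprecision is that the positivity of the denominator via Assumption~\ref{assn_a1}(d) requires $m \geq p+1$ (the paper makes this explicit), but since you only need continuity along some sequence $m\to\infty$ this does not affect the argument.
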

\begin{proof}
The proof is similar to the proof of Lemma 4 in DMR but requires a small adjustment when $p \geq 2$. We first show that $\Delta_{0,m,x}(\theta)$ is continuous in $\theta$ for any fixed $x \in \mathcal{X}$ and any $m \geq p+1$. Recall that $\Delta_{0,m,x}(\theta) = \log p_\theta(Y_0|\overline{\bf Y}^{-1}_{-m}, {\bf W}^0_{-m},X_{-m}= x)$ and
\[
p_\theta(Y_0|\overline{\bf Y}^{-1}_{-m}, {\bf W}^0_{-m},X_{-m}= x) = \frac{p_\theta({\bf Y}^0_{-m+1}|\overline{\bf Y}_{-m}, {\bf W}^0_{-m},X_{-m}= x)}{p_\theta({\bf Y}^{-1}_{-m+1}|\overline{\bf Y}_{-m}, {\bf W}^{-1}_{-m},X_{-m}= x)}.
\]
For $j \in \{-1,0\}$, we have
\begin{equation} \label{p_y_jm}
\begin{aligned}
& p_\theta({\bf Y}^{j}_{-m+1}|\overline{\bf Y}_{-m}, {\bf W}^{j}_{-m},X_{-m}= x) \\
& = \int q_{\theta}(x,x_{-m+1}) \prod_{i=-m+2}^j q_\theta(x_{i-1},x_i) \prod_{i=-m+1}^j g_\theta(Y_i|\overline{\bf Y}_{i-1},x_{i},W_i)\mu^{\otimes(m+j)} (d{\bf x}^j_{-m+1}).
\end{aligned}
\end{equation}
Because the integrand is bounded by $(\sigma_+^0  )^{m+j} \prod_{i=-m+1}^j b_+(\overline{\bf Y}_{i-1}^i,W_i)$, $p_\theta({\bf Y}^{j}_{-m+1}|\overline{\bf Y}_{-m},X_{-m}= x, {\bf W}^{j}_{-m})$ is continuous in $\theta$ $\mathbb{P}_{\theta^*}$-a.s.  by the continuity of $q_\theta$ and $g_\theta$ and the bounded convergence theorem. Furthermore, when $j \geq -m+p$, the infimum of the right hand side of (\ref{p_y_jm}) in $\theta$ is strictly positive $\mathbb{P}_{\theta^*}$-a.s.\ from Assumptions \ref{assn_a1}(d) and \ref{assn_a3}. Therefore, $\Delta_{0,m,x}(\theta)$ is continuous in $\theta$ $\mathbb{P}_{\theta^*}$-a.s. Because $\{\Delta_{0,m,x}(\theta)\}$ is continuous in $\theta$ and converges uniformly in $\theta \in \Theta$ $\mathbb{P}_{\theta^*}$-a.s., $\Delta_{0,\infty}(\theta)$ is continuous in $\theta \in \Theta$ $\mathbb{P}_{\theta^*}$-a.s. The stated result then follows from $\estar \sup_{\theta \in \Theta}|\Delta_{0,\infty}(\theta)|<\infty$ by Lemma \ref{lemma3_dmr}(c) and the dominated convergence theorem.
\end{proof}

This lemma corresponds to Lemma 9 of DMR and derives the minorization constant for the time-reversed process $\{X_{n-k}\}_{0\leq k \leq n+m}$ conditional on $(\overline{\bf Y}^{n}_{-m},{\bf W}^{n}_{-m})$. 
\begin{lemma} \label{lemma_dmr_9}
Assume Assumptions \ref{assn_a1} and \ref{assn_a2}. Let $m, n \in \mathbb{Z}$ with $-m \leq n$. Then,  the following holds for all $\theta \in \Theta$;  (a) under $\mathbb{P}_\theta$, conditionally on $(\overline{\bf Y}_{-m}^n, {\bf W}_{-m}^n)$, the time-reversed process $\{X_{n-k}\}_{0 \leq k \leq n+m}$ is an inhomogeneous Markov chain, and (b) for all $p \leq k \leq n+m$, there exists a function $\tilde \mu_{k,\theta}(\overline{\bf y}_{-m}^{n-k+p-1},{\bf w}_{-m}^{n-k+p-1}, A)$ such that
\begin{enumerate}
\item[(i)] For any $A \in \mathcal{B}(\mathcal{X})$,  $\tilde \mu_{k,\theta}(\cdot,\cdot,A)$ is Borel measurable function defined on $\mathcal{Y}^{n-k+p+m+s-1} \times \mathcal{W}^{n-k+p+m}$;
\item[(ii)] For any $(\overline{\bf y}_{-m}^{n-k+p-1},{\bf w}_{-m}^{n-k+p-1}, A)$, $\tilde\mu_{k,\theta}(\overline{\bf y}_{-m}^{n-k+p-1},{\bf w}_{-m}^{n-k+p-1}, \cdot)$ is a probability measure on $\mathcal{B}(\mathcal{X})$. Furthermore, $\tilde\mu_{k,\theta}(\overline{\bf y}_{-m}^{n-k+p-1},{\bf w}_{-m}^{n-k+p-1}, \cdot)$  is absolutely continuous with respect to $\mu$ for all $(\overline{\bf y}_{-m}^{n-k+p-1},{\bf w}_{-m}^{n-k+p-1})$, and, for all $(\overline{\bf y}_{-m}^{n-k+p-1},{\bf w}_{-m}^{n-k+p-1})$,
\begin{align*}
& \mathbb{P}_\theta \left(X_{n-k} \in A \middle| X_{n-k+p}, \overline{\bf y}_{-m}^{n}, {\bf w}_{-m}^{n} \right) \\
& = \mathbb{P}_\theta \left(X_{n-k} \in A \middle| X_{n-k+p}, \overline{\bf y}_{-m}^{n-k+p-1}, {\bf w}_{-m}^{n-k+p-1} \right)\\
& \geq \omega(\overline{\bf y}_{n-k}^{n-k+p-1}, {\bf w}_{n-k}^{n-k+p-1}) \tilde \mu_{k,\theta}(\overline{\bf y}_{-m}^{n-k+p-1},{\bf w}_{-m}^{n-k+p-1},A),
\end{align*}
where $\omega(\overline{\bf y}_{n-k}^{n-k+p-1}, {\bf w}_{n-k}^{n-k+p-1}) := \sigma_-/\sigma_+$ when $p=1$, and, when $p \geq 2$, $\omega(\overline{\bf y}_{n-k}^{n-k+p-1}, {\bf w}_{n-k}^{n-k+p-1})$ is defined as in (\ref{omega_defn}) but replacing $k-1$ and $k-p$ in (\ref{omega_defn}) with $n-k+p-1$ and $n-k$.
\end{enumerate}
\end{lemma}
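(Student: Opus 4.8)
The plan is to mirror the proof of Lemma \ref{lemma_dmr_1}, exploiting the fact that the (conditional) Markov property is symmetric under time reversal. For part (a), Lemma \ref{lemma_dmr_1}(a) already shows that, conditionally on $(\overline{\bf Y}_{-m}^n,{\bf W}_{-m}^n)$, the forward process $\{X_k\}_{k=-m}^n$ is an inhomogeneous Markov chain. Since the conditional independence of the past and the future given the present is symmetric in time, the time-reversed process $\{X_{n-k}\}_{0\le k\le n+m}$ is also an inhomogeneous Markov chain conditionally on $(\overline{\bf Y}_{-m}^n,{\bf W}_{-m}^n)$, which yields part (a).

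For part (b), the first equality in (ii) is the reversed analogue of (\ref{x_a}): I would show that, conditionally on $X_{n-k+p}$ together with the observations up to time $n-k+p-1$, the future observations $(\overline{\bf Y}_{n-k+p}^n,{\bf W}_{n-k+p}^n)$ are independent of the past coordinate $X_{n-k}$. This holds because $\{Z_k\}$ is a Markov chain given $\{W_k\}$ and $Y_{n-k+p}$ is generated from $(\overline{\bf Y}_{n-k+p-1},X_{n-k+p},W_{n-k+p})$, so conditioning on $X_{n-k+p}$ and $\overline{\bf Y}_{n-k+p-1}$ renders the forward observations irrelevant for $X_{n-k}$. Writing $j:=n-k$, taking the past $(\overline{\bf y}_{-m}^{j},{\bf w}_{-m}^{j})$ as the baseline conditioning event, and applying Bayes' rule (the weakly exogenous covariate factors cancel between numerator and denominator as in the derivation of (\ref{cond_w_1})), I would express the reversed transition as
\begin{equation*}
\mathbb{P}_\theta\!\left(X_{j}\in A\,\middle|\,X_{j+p}=x',\overline{\bf y}_{-m}^{j+p-1},{\bf w}_{-m}^{j+p-1}\right)=\frac{\int_A \pi(x)\,R(x,x')\,\mu(dx)}{\int_{\mathcal{X}} \pi(x)\,R(x,x')\,\mu(dx)},
\end{equation*}
where $\pi(x):=p_\theta(X_j=x\mid\overline{\bf y}_{-m}^{j},{\bf w}_{-m}^{j})$ is the forward filter density with respect to $\mu$ and $R(x,x'):=\int \prod_{i=j+1}^{j+p}q_\theta(x_{i-1},x_i)\prod_{i=j+1}^{j+p-1}g_\theta(y_i\mid\overline{\bf y}_{i-1},x_i,w_i)\,\mu^{\otimes(p-1)}(d{\bf x}_{j+1}^{j+p-1})$, with the convention $x_j=x$ and $x_{j+p}=x'$.

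It then remains to bound $R(x,x')$ uniformly in $(x,x')$. Pulling the extreme values of the intermediate densities out of the integral and using $\int \prod_{i=j+1}^{j+p}q_\theta(x_{i-1},x_i)\,\mu^{\otimes(p-1)}(d{\bf x}_{j+1}^{j+p-1})=q_\theta^p(x,x')\in[\sigma_-,\sigma_+]$ gives
\begin{equation*}
\sigma_-\inf_{{\bf x}_{j+1}^{j+p-1}}\prod_{i=j+1}^{j+p-1}g_\theta(y_i\mid\overline{\bf y}_{i-1},x_i,w_i)\le R(x,x')\le \sigma_+\sup_{{\bf x}_{j+1}^{j+p-1}}\prod_{i=j+1}^{j+p-1}g_\theta(y_i\mid\overline{\bf y}_{i-1},x_i,w_i).
\end{equation*}
Substituting the lower bound in the numerator and the upper bound in the denominator and using $\int_{\mathcal{X}}\pi(x)\,\mu(dx)=1$ yields a minorization with coefficient $(\sigma_-/\sigma_+)(\inf\prod g/\sup\prod g)$, which dominates $\omega(\overline{\bf y}_{n-k}^{n-k+p-1},{\bf w}_{n-k}^{n-k+p-1})$ of (\ref{omega_defn}) with the indices shifted as stated, since $\inf\prod g/\sup\prod g\le 1$. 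Hence the minorization holds with $\tilde\mu_{k,\theta}(\overline{\bf y}_{-m}^{n-k+p-1},{\bf w}_{-m}^{n-k+p-1},A):=\int_A\pi(x)\,\mu(dx)$, which is a probability measure absolutely continuous with respect to $\mu$ and measurable in the conditioning observations, establishing (i)--(ii); the case $p=1$ is immediate because then $R(x,x')=q_\theta(x,x')\in[\sigma_-,\sigma_+]$ and $\omega=\sigma_-/\sigma_+$. I expect the main obstacle to be the rigorous verification of the reduction identity and the Bayes manipulation in the presence of the covariate $W$ — specifically, tracking precisely which observation and covariate indices may be dropped once we condition on $X_{n-k+p}$ — whereas the density bounds themselves are routine.
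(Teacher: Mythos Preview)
Your proposal is correct and follows essentially the same strategy as the paper: establish the reversed Markov property, reduce the conditioning set by showing future observations are irrelevant once $X_{n-k+p}$ is given, write the reversed transition as a ratio, and bound the factor depending on $(x,x')$ uniformly. The only substantive difference is the factorization you choose: you write the ratio in terms of the filter density $\pi(x)=p_\theta(X_{n-k}=x\mid\overline{\bf y}_{-m}^{n-k},{\bf w}_{-m}^{n-k})$ and the \emph{unnormalized} forward density $R(x,x')$, whereas the paper factors the numerator as $p_\theta(X_{n-k+p}\mid X_{n-k}=x,\overline{\bf y}_{n-k}^{n-k+p-1},{\bf w}_{n-k}^{n-k+p-1})\times p_\theta(X_{n-k}=x,{\bf y}_{-m+1}^{n-k+p-1},\ldots)$ and bounds the first (normalized) factor as in (\ref{p_p_bound}). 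Because the paper's factor is a conditional density, bounding it introduces the ratio $\inf\prod g/\sup\prod g$ in both numerator and denominator, yielding the squared ratio in $\omega$; your direct bound on $R$ gives the sharper coefficient $(\sigma_-/\sigma_+)(\inf\prod g/\sup\prod g)$, which you then correctly weaken to $\omega$. Correspondingly, your $\tilde\mu$ is the filter at time $n-k$, while the paper's is the smoother at time $n-k$ given observations up to $n-k+p-1$; either choice satisfies (i)--(ii). Your caveat about tracking which $W$-indices may be dropped is exactly the point the paper addresses via items (i)--(iv) in its proof of (\ref{x_b}), and the argument you sketch is the right one.
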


\begin{proof}
The proof is similar to the proof of Lemma \ref{lemma_dmr_1}. Because the time-reversed process $\{Z_{n-k}\}_{0 \leq k\leq n+m}$ is Markov conditional on ${\bf W}_{-m}^{n}$  and ${\bf Z}_{-m}^{n-k+1}$ is independent of ${\bf W}_{n-k+2}^n$ given ${\bf W}_{-m}^{n-k+1}$, we have, for $1 \leq k \leq n+m$, 
\[
\mathbb{P}_\theta(X_{n-k} \in A| {\bf X}^{n}_{n-k+1}, \overline{\bf Y}^n_{-m}, {\bf W}_{-m}^n) = \mathbb{P}_\theta(X_{n-k} \in A| X_{n-k+1}, \overline{\bf Y}_{-m}^{n-k+1}, {\bf W}_{-m}^{n-k+1}).
\]
Therefore, $\{X_{n-k}\}_{0 \leq k \leq n+m}$ is an inhomogeneous Markov chain given $(\overline{\bf Y}_{-m}^n, {\bf W}_{-m}^n)$, and part (a) follows.

For part (b), because (i) the time-reversed process $\{Z_{n-k}\}_{0 \leq k\leq n+m}$ is Markov conditional on ${\bf W}_{-m}^{n}$, (ii) $Y_{n-k+p}$ is independent of ${\bf X}_{-m}^{n-k+p-1}$ given $(X_{n-k+p}, \overline{\bf Y}_{-m}^{n-k+p-1}, {\bf W}_{-m}^{n})$, (iii) $X_{n-k+p}$ is independent of the other random variables given $X_{n-k+p-1}$, and (iv) $W_{n-k+p}$ is independent of ${\bf Z}_{-m}^{n-k+p-1}$ given ${\bf W}_{-m}^{n-k+p-1}$, we have, for $1 \leq k \leq n+m$, 
\begin{equation} \label{x_b}
\mathbb{P}_\theta \left(X_{n-k} \in A\middle| X_{n-k+p}, \overline{\bf Y}_{-m}^{n}, {\bf W}_{-m}^{n} \right) = \mathbb{P}_\theta \left(X_{n-k} \in A\middle| X_{n-k+p}, \overline{\bf Y}_{-m}^{n-k+p-1}, {\bf W}_{-m}^{n-k+p-1} \right).
\end{equation}
Observe that in view of $n-k \geq -m$,
\begin{align*}
& \mathbb{P}_\theta \left(X_{n-k}\in A, X_{n-k+p}, \overline{\bf y}_{-m}^{n-k+p-1}, {\bf w}_{-m}^{n-k+p-1} \right) \\
& = \mathbb{P}_\theta \left(X_{n-k+p} \middle| X_{n-k}\in A, \overline{\bf y}_{n-k}^{n-k+p-1}, {\bf w}_{n-k}^{n-k+p-1} \right) \\
& \quad \times \mathbb{P}_\theta \left(X_{n-k}\in A, {\bf y}_{-m+1}^{n-k+p-1}, {\bf w}_{-m+1}^{n-k+p-1} \middle|\overline{\bf y}_{-m}, w_{-m} \right) \mathbb{P}_\theta \left(\overline{\bf y}_{-m}, w_{-m} \right).
\end{align*}
It follows that
\begin{align*}
& \mathbb{P}_\theta \left(X_{n-k} \in A \middle| X_{n-k+p}, \overline{\bf y}_{-m}^{n-k+p-1}, {\bf w}_{-m}^{n-k+p-1} \right) = \frac{\int_A G_\theta(x, X_{n-k+p}, \overline{\bf y}_{-m}^{n-k+p-1}, {\bf w}_{-m}^{n-k+p-1} ) \mu(dx) }{\int_{\mathcal{X}} G_\theta(x, X_{n-k+p}, \overline{\bf y}_{-m}^{n-k+p-1}, {\bf w}_{-m}^{n-k+p-1} ) \mu(dx)},
\end{align*}
where $G_\theta(x, X_{n-k+p}, \overline{\bf y}_{-m}^{n-k+p-1}, {\bf w}_{-m}^{n-k+p-1} ) := p_\theta(X_{n-k+p}|X_{n-k}=x, \overline{\bf y}_{n-k}^{n-k+p-1}, {\bf w}_{n-k}^{n-k+p-1}) \times \\ p_{\theta}(X_{n-k}=x, {\bf y}_{-m+1}^{n-k+p-1}, {\bf w}_{-m+1}^{n-k+p-1} |\overline{\bf y}_{-m}, w_{-m})$.

When $p=1$, we have $p_\theta(X_{n-k+p}|X_{n-k}=x, \overline{\bf y}_{n-k}^{n-k+p-1}, {\bf w}_{n-k}^{n-k+p-1}) =p_\theta(X_{n-k+1}|X_{n-k}= x) \in [\sigma_-,\sigma_+]$. Therefore, the stated result follows with $\tilde \mu_{k, \theta}(\overline{\bf y}_{-m}^{n-k+p-1},{\bf w}_{-m}^{n-k+p-1},A)$ defined as
\begin{equation} \label{tilde_mu_k}
\tilde \mu_{k, \theta} (\overline{\bf y}_{-m}^{n-k+p-1},{\bf w}_{-m}^{n-k+p-1},A) := \frac{\int_A p_{\theta}(X_{n-k}=x, {\bf y}_{-m+1}^{n-k+p-1}, {\bf w}_{-m+1}^{n-k+p-1} |\overline{\bf y}_{-m}, w_{-m}) \mu(dx)}{ \int_{\mathcal{X}} p_{\theta}(X_{n-k}=x, {\bf y}_{-m+1}^{n-k+p-1}, {\bf w}_{-m+1}^{n-k+p-1} |\overline{\bf y}_{-m}, w_{-m}) \mu(dx) }.
\end{equation}
Note that $\int_{\mathcal{X}} p_{\theta}(X_{n-k}=x, {\bf y}_{-m+1}^{n-k+p-1}, {\bf w}_{-m+1}^{n-k+p-1} |\overline{\bf y}_{-m}, w_{-m}) \mu(dx)>0$ from Assumption \ref{assn_a3}. 

When $p \geq 2$, it follows from a derivation similar to (\ref{p_p_bound}) that $p_\theta(x_{n-k+p}|x_{n-k}, \overline{\bf y}_{n-k}^{n-k+p-1}, {\bf w}_{n-k}^{n-k+p-1})$ is bounded from below by
\begin{align*}
\inf_\theta\inf_{x_{n-k},x_{n-k+p}} q_{\theta}^p(x_{n-k},x_{n-k+p}) H 
\end{align*}
where $H:=\inf_\theta\inf_{{\bf x}^{n-k+p-1}_{n-k+1}} \prod_{i=n-k+1}^{n-k+p-1} g_\theta(y_i|\overline{\bf y}_{i-1},x_i,w_i)/\sup_\theta\sup_{{\bf x}^{n-k+p-1}_{n-k+1}} \prod_{i=n-k+1}^{n-k+p-1} g_\theta(y_i|\overline{\bf y}_{i-1},x_i,w_i)$, and an upper bound on $p_\theta(x_{n-k+p}|x_{n-k}, \overline{\bf y}_{n-k}^{n-k+p-1}, {\bf w}_{n-k}^{n-k+p-1})$ is given by \\$\sup_\theta\sup_{x_{n-k},x_{n-k+p}} q_{\theta}^p(x_{n-k}, x_{n-k+p})/H$.   Therefore, the stated result holds with $\tilde \mu_k$ defined in (\ref{tilde_mu_k}).
\end{proof} 

 This lemma bounds the distance between the distributions of $X_k$ given $(\overline{\bf Y}_{-m}^n, {\bf W}_{-m}^n)$ and $(\overline{\bf Y}_{-m}^{n-1}, {\bf W}_{-m}^{n-1})$. This  lemma shows that the time-reversed process $\{X_{n-k}\}_{0 \leq k\leq n+m}$ conditional on $(\overline{\bf Y}^{n}_{-m},{\bf W}^{n}_{-m})$ forgets its initial conditioning variable (i.e., $Y_n$ and $W_{n}$) exponentially fast. Part (b) corresponds to equation (39) on page 2294 of DMR.
\begin{lemma}\label{lemma_dmr_39}
Assume Assumptions \ref{assn_a1} and \ref{assn_a2}. Let $m, n \in \mathbb{Z}$ with $m,n \geq 0$ and $\theta \in \Theta$. Then, 
\\(a) for all $-m \leq k \leq n$ and all $(\overline{\bf y}_{-m}^n, {\bf w}_{-m}^n)$,
\begin{align*}
& \left\| \mathbb{P}_\theta \left(X_k \in \cdot \middle|\overline{\bf y}_{-m}^n, {\bf w}_{-m}^n \right) - \mathbb{P}_\theta \left(X_k \in \cdot \middle| \overline{\bf y}_{-m}^{n-1}, {\bf w}_{-m}^{n-1} \right) \right\|_{TV} \\
& \leq \prod_{i=1}^{\lfloor (n-1-k)/p \rfloor} \left( 1-\omega(\overline{\bf y}_{n-2-pi+1}^{n-2-pi+p}, {\bf w}_{n-2-pi+1}^{n-2-pi+p}) \right).
\end{align*}
(b)
for all $-m+1 \leq k \leq n$ and all $(\overline{\bf y}_{-m}^n, {\bf w}_{-m}^n,x)$,
\begin{align*}
& \left\| \mathbb{P}_\theta \left(X_k \in \cdot \middle| \overline{\bf y}_{-m}^n, {\bf w}_{-m}^n, X_{-m}=x \right) - \mathbb{P}_\theta \left(X_k \in \cdot \middle| \overline{\bf y}_{-m}^{n-1}, {\bf w}_{-m}^{n-1}, X_{-m}=x \right) \right\|_{TV} \\
& \leq \prod_{i=1}^{\lfloor (n-1-k)/p \rfloor} \left( 1-\omega(\overline{\bf y}_{n-2-pi+1}^{n-2-pi+p}, {\bf w}_{n-2-pi+1}^{n-2-pi+p}) \right).
\end{align*}
\end{lemma}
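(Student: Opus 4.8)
The plan is to recognize, in each part, both of the conditional laws of $X_k$ being compared as the output of propagating a \emph{single} time-reversed Markov chain, governed by common transition kernels, from the shared starting index $n-1$ down to $k$, the two laws differing only in the initial distribution placed at index $n-1$. The stated geometric bound will then be exactly the coupling estimate of Lemma \ref{lemma_coupling}, fed with the $p$-step minorization constants supplied by Lemma \ref{lemma_dmr_9}.

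For part (a), I would set $\pi_1 := \mathbb{P}_\theta(X_{n-1} \in \cdot \mid \overline{\bf y}_{-m}^n, {\bf w}_{-m}^n)$ and $\pi_2 := \mathbb{P}_\theta(X_{n-1} \in \cdot \mid \overline{\bf y}_{-m}^{n-1}, {\bf w}_{-m}^{n-1})$. The key observation, which comes from the reversed-Markov reduction in the proof of Lemma \ref{lemma_dmr_9}(a), is that for every index $j$ with $-m < j \le n-1$ the one-step reversed kernel satisfies $\mathbb{P}_\theta(X_{j-1} \in \cdot \mid X_j, \overline{\bf y}_{-m}^n, {\bf w}_{-m}^n) = \mathbb{P}_\theta(X_{j-1} \in \cdot \mid X_j, \overline{\bf y}_{-m}^{j}, {\bf w}_{-m}^{j}) = \mathbb{P}_\theta(X_{j-1} \in \cdot \mid X_j, \overline{\bf y}_{-m}^{n-1}, {\bf w}_{-m}^{n-1})$; that is, a reversed transition out of any index at most $n-1$ never sees the extra observation $(y_n,w_n)$ and therefore coincides under the two conditionings. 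Since the reversed chain is Markov (Lemma \ref{lemma_dmr_9}(a)), it follows that $\mathbb{P}_\theta(X_k \in \cdot \mid \overline{\bf y}_{-m}^n, {\bf w}_{-m}^n)$ and $\mathbb{P}_\theta(X_k \in \cdot \mid \overline{\bf y}_{-m}^{n-1}, {\bf w}_{-m}^{n-1})$ are precisely the laws of $X_k$ obtained by pushing $\pi_1$ and $\pi_2$, respectively, through one and the same family of reversed kernels.

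I would then reindex the reversed chain by $\tilde X_t := X_{n-1-t}$, so that $\tilde X_0 = X_{n-1}$ and $\tilde X_{(n-1)-k} = X_k$, and apply Lemma \ref{lemma_coupling} to $\{\tilde X_t\}$ with the two initial laws $\pi_1,\pi_2$ and the $p$-step minorization $\mathbb{P}_\theta(\tilde X_t \in \cdot \mid \tilde X_{t-p}) \ge \varepsilon_t Q_t(\cdot)$ from Lemma \ref{lemma_dmr_9}(b), where $\varepsilon_{t} = \omega(\overline{\bf y}_{n-1-t}^{n-1-t+p-1}, {\bf w}_{n-1-t}^{n-1-t+p-1})$. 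The coupling conclusion is $\prod_{i=1}^{\lfloor (n-1-k)/p\rfloor}(1-\varepsilon_{ip})$, and substituting $\varepsilon_{ip}$ together with the identities $n-1-ip = n-2-pi+1$ and $n-1-ip+p-1 = n-2-pi+p$ reproduces the product in the statement. The degenerate cases $k \in \{n-1,n\}$, where the product is empty, are covered by the trivial bound $\|\cdot\|_{TV} \le 1$. Part (b) is then handled by repeating this argument verbatim with the event $\{X_{-m}=x\}$ adjoined to every conditioning set: because this event is common to both laws being compared and sits at index $-m < k$, it changes neither the identity of the reversed kernels for indices in $(-m,n-1]$ nor the minorization constant $\omega(\cdot)$, so the same coupling bound goes through.

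I expect the main obstacle to be purely the index bookkeeping rather than any conceptual difficulty: one must check that the reversed-kernel reduction legitimately strips the observation $(y_n,w_n)$ for \emph{all} relevant $j \le n-1$, and that the minorization-constant indices in Lemma \ref{lemma_dmr_9}(b)—applied now with terminal time $n-1$ in place of $n$—line up exactly with the subscripts $n-2-pi+1$ and $n-2-pi+p$ appearing in the claimed product. Once this alignment is confirmed, the estimate is an immediate consequence of Lemmas \ref{lemma_coupling} and \ref{lemma_dmr_9}.
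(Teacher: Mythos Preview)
Your proposal is correct and follows essentially the same route as the paper: express both conditional laws of $X_k$ as the law of the time-reversed chain started at index $n-1$ with two different initial distributions, observe that the reversed transition kernels for indices $\leq n-1$ do not depend on $(y_n,w_n)$, and then invoke Lemma~\ref{lemma_coupling} together with the $p$-step minorization from Lemma~\ref{lemma_dmr_9}. One small point worth tightening for part~(b): adjoining $\{X_{-m}=x\}$ does change the reversed kernels themselves (they now depend on $x$), and the paper records this by introducing a separate minorizing measure $\breve\mu_{k,\theta}$ via an argument parallel to Lemma~\ref{lemma_dmr_9}; what is unchanged is the minorization \emph{constant} $\omega(\cdot)$, which is all the coupling bound needs. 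Your index alignment $\varepsilon_{ip}=\omega(\overline{\bf y}_{n-2-pi+1}^{n-2-pi+p},{\bf w}_{n-2-pi+1}^{n-2-pi+p})$ is correct.
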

\begin{proof}
When $k \geq n-1$, the stated result holds trivially because $\prod_{i=1}^{j}a_i=1$ when $j <i$. We first show part (a) for $k \leq n- 2$. Because the time-reversed process $\{Z_{n-k}\}_{0 \leq k\leq n+m}$ is Markov conditional on ${\bf W}_{-m}^{n}$ and $W_n$ is independent of $Z_{n-1}$ given $W_{n-1}$, we have $\mathbb{P}_\theta (X_k \in \cdot | \overline{\bf y}_{-m}^n, {\bf w}_{-m}^n ) = \int \mathbb{P}_\theta (X_k \in \cdot | x_{n-1}, \overline{\bf y}_{-m}^{n-1}, {\bf w}_{-m}^{n-1} ) \mathbb{P}_\theta (dx_{n-1} | \overline{\bf y}_{-m}^n, {\bf w}_{-m}^n )$.
Similarly, we obtain $\mathbb{P}_\theta (X_k \in \cdot | \overline{\bf y}_{-m}^{n-1}, {\bf w}_{-m}^{n-1} ) = \int \mathbb{P}_\theta (X_k \in \cdot | x_{n-1}, \overline{\bf y}_{-m}^{n-1}, {\bf w}_{-m}^{n-1} ) \mathbb{P}_\theta (dx_{n-1} | \overline{\bf y}_{-m}^{n-1}, {\bf w}_{-m}^{n-1} )$. It follows that
\begin{align*}
&\left| \mathbb{P}_\theta \left(X_k \in \cdot \middle| \overline{\bf y}_{-m}^n, {\bf w}_{-m}^n \right) - \mathbb{P}_\theta \left(X_k \in \cdot \middle| \overline{\bf y}_{-m}^{n-1}, {\bf w}_{-m}^{n-1} \right) \right| \\
& \leq \int \mathbb{P}_\theta \left(X_k \in \cdot \middle| x_{n-1}, \overline{\bf y}_{-m}^{n-1}, {\bf w}_{-m}^{n-1} \right) \left| \mathbb{P}_\theta \left(dx_{n-1} \middle| \overline{\bf y}_{-m}^n, {\bf w}_{-m}^n \right) - \mathbb{P}_\theta \left(dx_{n-1} \middle| \overline{\bf y}_{-m}^{n-1}, {\bf w}_{-m}^{n-1} \right) \right|.
\end{align*}
Therefore, the stated result follows from applying Lemmas \ref{lemma_dmr_9} and \ref{lemma_coupling} to the time-reversed process $\{X_{n-i}\}_{i=1}^{n-k}$ conditional on $(\overline{\bf Y}_{-m}^{n-1}, {\bf W}_{-m}^{n-1})$. 

For part (b) for $k \leq n- 2$, by using a similar argument to the proof of Lemma \ref{lemma_dmr_9}, we can show that (i) conditionally on $(\overline{\bf Y}_{-m}^n, {\bf W}_{-m}^n,X_{-m})$, the time-reversed process $\{X_{n-k}\}_{0 \leq k \leq n+m-1}$ is an inhomogeneous Markov chain, and (ii) for all $p \leq k \leq n+m-1$, there exists a probability measure $\breve \mu_{k,\theta}(\overline{\bf y}_{-m}^{n-k+p-1},{\bf w}_{-m}^{n-k+p-1}, x,A)$ such that, for all $(\overline{\bf y}_{-m}^{n-k+p-1},{\bf w}_{-m}^{n-k+p-1}, x)$,
\begin{align*}
& \mathbb{P}_\theta \left(X_{n-k} \in A \middle| X_{n-k+p}, \overline{\bf y}_{-m}^{n}, {\bf w}_{-m}^{n}, X_{-m}=x \right) \\
& = \mathbb{P}_\theta \left(X_{n-k} \in A \middle| X_{n-k+p}, \overline{\bf y}_{-m}^{n-k+p-1}, {\bf w}_{-m}^{n-k+p-1}, X_{-m}=x \right)\\
& \geq \omega(\overline{\bf y}_{n-k}^{n-k+p-1}, {\bf w}_{n-k}^{n-k+p-1}) \breve \mu_{k,\theta}(\overline{\bf y}_{-m}^{n-k+p-1},{\bf w}_{-m}^{n-k+p-1}, X_{-m}=x,A),
\end{align*}
with the same $\omega(\overline{\bf y}_{n-k}^{n-k+p-1}, {\bf w}_{n-k}^{n-k+p-1})$ as in Lemma \ref{lemma_dmr_9}. Therefore, the stated result follows from a similar argument to the proof of part (a).
\end{proof}

The following lemma is used in the proof of Lemmas \ref{lemma_psi_bound} and \ref{lemma_gamma_cgce}. This lemma provides the bounds on the difference in the conditional expectations of $\phi_{\theta t}^j=\phi^j(\theta,\overline{\bf Z}_{t-1}^t,W_t)$ when the conditioning sets are different. Define $\Omega_{\ell,k}:=\prod_{i=1}^{\lfloor (\ell-k)/p \rfloor} ( 1-\omega(\overline{\bf V}_{k+pi-p}^{k+pi-1}))$ and $\tilde \Omega_{\ell,k}:=\prod_{i=1}^{\lfloor (k-\ell)/p \rfloor} ( 1-\omega(\overline{\bf V}_{k -1 -pi+1}^{k -1 -pi+p}))$ with defining $\prod_{i=a}^b x_i:=1$ if $b<a$, where $\omega(\cdot)$ is defined in Lemma \ref{lemma_dmr_1} and $\overline{\bf V}_{a}^b:=(\overline{\bf{Y}}_{a}^b,{\bf W}_{a}^b)$.

\begin{lemma}\label{lemma_ijl}
Assume Assumptions \ref{assn_a1}--\ref{assn_distn}. Then, for all $m'\geq m\geq 0$, all $-m<s \leq t\leq n$, all $\theta\in G$, and all $x, x'\in \mathcal{X}$ and $j=1,2$,
\begin{align*}
(a) &\quad |{\mathbb{E}}_\theta[\phi_{\theta t}^j|\overline{\bf V}_{-m}^n,X_{-m}=x]-\mathbb{E}_\theta[\phi_{\theta t}^j|\overline{\bf V}_{-m}^n]| \leq 2 \Omega_{t-1,-m} |\phi_t^j|_{\infty},\\
(b) &\quad |\mathbb{E}_\theta[\phi_{\theta t}^j|\overline{\bf V}_{-m}^n]-\mathbb{E}_\theta[\phi_{\theta t}^j|\overline{\bf V}_{-m'}^n]| \leq 2 \Omega_{t-1,-m} |\phi_t^j|_{\infty},\\
(c) &\quad |\mathbb{E}_\theta[\phi_{\theta t}^j|\overline{\bf V}_{-m}^n, X_{-m}=x]-\mathbb{E}_\theta[\phi_{\theta t}^j|\overline{\bf V}_{-m'}^n, X_{-m'}=x']| \leq 2 \Omega_{t-1,-m} |\phi_t^j|_{\infty},\\
(d) &\quad |\mathbb{E}_\theta[\phi_{\theta t}^j|\overline{\bf V}_{-m}^n, X_{-m}=x]-\mathbb{E}_\theta[\phi_{\theta t}^j|\overline{\bf V}_{-m}^{n-1},X_{-m}=x]| \leq 2 \tilde\Omega_{t,n-1} |\phi_t^j|_{\infty}, \\
(e) &\quad |\mathbb{E}_\theta[\phi_{\theta t}^j|\overline{\bf V}_{-m}^n]-\mathbb{E}_\theta[\phi_{\theta t}^j|\overline{\bf V}_{-m}^{n-1}]| \leq 2 \tilde\Omega_{t,n-1} |\phi_t^j|_{\infty}, 
\end{align*}
and
\begin{align*}
(f) & \quad |\overline{\mathrm{cov}}_\theta[\phi_{\theta t},\phi_{\theta s}|\overline{\bf V}_{-m}^n]|\leq 2 \Omega_{t-1,s} |\phi_t|_{\infty}|\phi_s|_{\infty}, \\
(g) & \quad |\overline{\mathrm{cov}}_\theta[\phi_{\theta t},\phi_{\theta s}|\overline{\bf V}_{-m}^n,X_{-m}=x]|\leq 2 \Omega_{t-1,s} |\phi_t|_{\infty}|\phi_s|_{\infty}\\
(h) & \quad |\overline{\mathrm{cov}}_\theta[\phi_{\theta t},\phi_{\theta s}|\overline{\bf V}_{-m}^n,X_{-m}=x]-\overline{\mathrm{cov}}_\theta[\phi_{\theta t},\phi_{\theta s}|\overline{\bf V}_{-m}^n]| \leq 6 \Omega_{s-1,-m} |\phi_t|_{\infty}|\phi_s|_{\infty},\\
(i) & \quad |\overline{\mathrm{cov}}_\theta[\phi_{\theta t},\phi_{\theta s}|\overline{\bf V}_{-m}^n,X_{-m}=x]-\overline{\mathrm{cov}}_\theta[\phi_{\theta t},\phi_{\theta s}|\overline{\bf V}_{-m'}^n,X_{-m'}=x']| \leq 6 \Omega_{s-1,-m} |\phi_t|_{\infty}|\phi_s|_{\infty},\\
(j) & \quad |\overline{\mathrm{cov}}_\theta[\phi_{\theta t},\phi_{\theta s}|\overline{\bf V}_{-m}^n]-\overline{\mathrm{cov}}_\theta[\phi_{\theta t},\phi_{\theta s}|\overline{\bf V}_{-m}^{n-1}]| \leq 6 \tilde\Omega_{t,n-1} |\phi_t|_{\infty}|\phi_s|_{\infty},\\
(k) & \quad |\overline{\mathrm{cov}}_\theta[\phi_{\theta t},\phi_{\theta s}|\overline{\bf V}_{-m}^n,X_{-m}=x]-\overline{\mathrm{cov}}_\theta[\varphi_{\theta t},\phi_{\theta s}|\overline{\bf V}_{-m}^{n-1},X_{-m}=x]| \leq 6 \tilde\Omega_{t,n-1} |\phi_t|_{\infty}|\phi_s|_{\infty}.
\end{align*}
\end{lemma}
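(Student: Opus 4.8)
The plan is to reduce every difference on the left-hand side to a total-variation distance between conditional laws of the hidden states and then to invoke the forgetting bounds already available. Throughout, write $\mathcal{G}$ for a generic conditioning $\sigma$-field of the form $\sigma(\overline{\bf V}_{-m}^n)$ or $\sigma(\overline{\bf V}_{-m}^n,X_{-m}=x)$, and recall that, for fixed $\theta$ and observations, $\phi_{\theta t}^j$ is a function of $(X_{t-1},X_t)$ bounded in absolute value by $|\phi_t^j|_\infty$. For parts (a)--(e) I would first note that, since the conditional chain $\{X_k\}$ is Markov given the observations (Lemma \ref{lemma_dmr_1}(a) in the forward direction, Lemma \ref{lemma_dmr_9}(a) in reverse), the transition kernel $X_{t-1}\to X_t$ for the forward cases, and the reverse kernel $X_t\to X_{t-1}$ for the backward cases, depends only on the observations common to the two conditioning sets. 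Consequently the total-variation distance between the two \emph{joint} laws of $(X_{t-1},X_t)$ equals that between the marginals of $X_{t-1}$ (forward) or of $X_t$ (reverse), because projecting onto, and reconstructing from, a single coordinate preserves total variation when the other coordinate is generated by a shared kernel. The identity $\sup_{f:\max|f|\le 1}|\int f\,d\mu_1-\int f\,d\mu_2|=2\|\mu_1-\mu_2\|_{TV}$ then turns each expectation difference into $2|\phi_t^j|_\infty$ times this marginal distance, which Lemma \ref{corollary_1} bounds by $\Omega_{t-1,-m}$ (setting $k=t-1$) for (a)--(c) and Lemma \ref{lemma_dmr_39} bounds by $\tilde\Omega_{t,n-1}$ (setting $k=t$) for (d)--(e). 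The mismatched horizons in (b),(c) are absorbed by writing the law of $X_{-m}$ under the deeper conditioning as the initial measure $\mu_1$ in Lemma \ref{corollary_1} and the point mass, or the shallower law, as $\mu_2$.

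For the covariance bounds I would build on the representation
\[
\mathrm{cov}_\theta\!\left[\phi_{\theta t},\phi_{\theta s}\,\middle|\,\mathcal{G}\right] = \mathbb{E}_\theta\!\left[\phi_{\theta s}\bigl(\mathbb{E}_\theta[\phi_{\theta t}\mid\mathcal{G},X_s]-\mathbb{E}_\theta[\phi_{\theta t}\mid\mathcal{G}]\bigr)\,\middle|\,\mathcal{G}\right],
\]
valid for $s\le t$ because, given $X_s$ and the observations, $\phi_{\theta t}$ (a function of $X_{t-1},X_t$ with $t-1\ge s$) is conditionally independent of $\phi_{\theta s}$ by the Markov property, and because the inner bracket is centered. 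Parts (f),(g) then follow at once: the inner difference is, by the argument of part (a) but with the chain started at time $s$, at most $2\Omega_{t-1,s}|\phi_t|_\infty$, while $|\phi_{\theta s}|\le|\phi_s|_\infty$, which yields the constant $2$. For (h)--(k) I would expand both conditional covariances through this representation and bound their difference by the elementary product manipulation $ab-cd=a(b-d)+d(a-c)$ together with the additivity $\mathbb{E}[fg]-\mathbb{E}[f'g']=(\mathbb{E}[f]-\mathbb{E}[f'])g+\mathbb{E}[f'](g-g')$. Each resulting factor is a single-expectation difference already controlled by parts (a)--(e); here one uses the forward Markov property (in (h),(i) the inner map $X_s\mapsto\mathbb{E}_\theta[\phi_{\theta t}\mid\mathcal{G},X_s]$ is unchanged by adding the conditioning at $-m$) and the monotonicity $\Omega_{t-1,-m}\le\Omega_{s-1,-m}$ and $\tilde\Omega_{s,n-1}\le\tilde\Omega_{t,n-1}$ coming from $s\le t$. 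Collecting the finitely many terms dominates everything by $\Omega_{s-1,-m}|\phi_t|_\infty|\phi_s|_\infty$ for (h),(i) and by $\tilde\Omega_{t,n-1}|\phi_t|_\infty|\phi_s|_\infty$ for (j),(k), and a careful grouping absorbs the accumulated factors into the constant $6$.

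The main obstacle is the bookkeeping in (h)--(k): one must confirm the precise conditional-independence reductions so that each piece of the covariance difference is the expectation of a single pair $(X_{\ell-1},X_\ell)$ whose law is governed by exactly one forgetting lemma, and one must check that the \emph{correct} exponent appears---$\Omega_{s-1,-m}$ anchored at the earlier index $s$ when the perturbation sits in the initial conditioning $X_{-m}$, versus $\tilde\Omega_{t,n-1}$ anchored at the later index $t$ when the perturbation is the terminal observation $(Y_n,W_n)$. This asymmetry is exactly where the monotonicity of $\Omega$ and $\tilde\Omega$ in their number of factors is exploited. The delicate case is (j),(k), where the inner map $X_s\mapsto\mathbb{E}_\theta[\phi_{\theta t}\mid\mathcal{G},X_s]$ itself changes when the terminal observation is dropped, so its variation must also be estimated by $\tilde\Omega_{t,n-1}$ via part (d); choosing the grouping that keeps the total number of terms small enough to reach the stated constant is the only genuinely fiddly part, after which the remaining bounds and the measurability of the conditional covariances are routine.
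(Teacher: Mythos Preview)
Your plan for parts (a)--(e) is exactly the paper's: reduce to a total-variation distance for the pair $(X_{t-1},X_t)$, observe that the shared forward (resp.\ reverse) kernel makes this equal to the marginal TV of $X_{t-1}$ (resp.\ $X_t$), and invoke Lemma~\ref{corollary_1} (resp.\ Lemma~\ref{lemma_dmr_39}).

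For the covariance inequalities you take a genuinely different route. The paper does \emph{not} tower through $X_s$; it writes $\mathrm{cov}=\mathbb E[\phi_t\phi_s]-\mathbb E[\phi_t]\mathbb E[\phi_s]$ and bounds $|\mathbb E_1[\phi_t\phi_s]-\mathbb E_2[\phi_t\phi_s]|$ directly by $2\|\mathcal L_1-\mathcal L_2\|_{TV}\,|\phi_t|_\infty|\phi_s|_\infty$, where $\mathcal L_i$ is the \emph{joint} law of $(X_{s-1},X_s,X_{t-1},X_t)$. Because this joint law is generated from a single coordinate (the $X_{s-1}$ marginal in the forward cases (h),(i); the $X_t$ marginal in the reverse cases (j),(k)) by a kernel common to both conditioning sets, the four-tuple TV collapses to that one marginal TV, which is then bounded by $\Omega_{s-1,-m}$ or $\tilde\Omega_{t,n-1}$. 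Adding the two product terms from $ab-cd=a(b-d)+d(a-c)$ gives exactly three contributions of size $2$, hence the constant $6$.

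Your tower representation is perfectly valid for (f),(g) and also delivers the constant $6$ for (h),(i), since there the inner map $x_s\mapsto\mathbb E_\theta[\phi_t\mid\mathcal G,X_s=x_s]$ really is unchanged (forward Markov property) and only the outer law of $(X_{s-1},X_s)$ moves. The difficulty is (j),(k). Two points: first, the step ``its variation must also be estimated by $\tilde\Omega_{t,n-1}$ via part (d)'' is not literally part (d)---you need Lemma~\ref{lemma_dmr_39} with the additional conditioning $X_s=x_s$ (the paper only states it with $X_{-m}=x$), which does follow by the same proof but must be noted. Second, a direct count with your decomposition produces \emph{four} contributions of size $2$ (two from $E_1[\phi_s h_1]-E_2[\phi_s h_1]$ after bounding $|h_1|\le 2|\phi_t|_\infty$, and two from $\sup_{x_s}|h_1-h_2|$), i.e.\ the constant $8$ rather than $6$; the claimed ``careful grouping'' to reach $6$ is not evident. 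None of the downstream results cares about $6$ versus $8$, but the lemma as stated asserts $6$, and the paper's joint-law route achieves it without any extra lemma and without the grouping puzzle.
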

\begin{proof}[Proof of Lemma \ref{lemma_ijl}]
To prove parts (a)--(c), we first show that, for all $-m \leq k \leq t-1$, all probability measures $\mu_1$ and $\mu_2$ on $\mathcal{B}(\mathcal{X})$, and all $\overline{\bf V}_{-m}^n$,
\begin{equation} \label{xi_bound}
\begin{aligned}
&\sup_{A }\left| \int \mathbb{P}_{\theta}({\bf X}_{t-1}^{t} \in A |X_{k} =x,\overline{\bf V}_{-m}^n) \mu_1(dx)- \int \mathbb{P}_{\theta}({\bf X}_{t-1}^{t} \in A |X_{k} =x,\overline{\bf V}_{-m}^{n}) \mu_2(dx) \right| \\
& \leq \prod_{i=1}^{\lfloor(t-1-k)/p\rfloor} \left( 1-\omega(\overline{\bf V}_{k+pi-p}^{k+pi-1}) \right). 
\end{aligned}
\end{equation}
When $k=t-1$, (\ref{xi_bound}) holds trivially. When $-m \leq k<t-1$, equation (\ref{x_a}) and the Markov property of $Z_t$ imply that $\mathbb{P}_{\theta}({\bf X}_{t-1}^{t} \in A|X_{k}, \overline{\bf V}_{-m}^n) = \mathbb{P}_{\theta}({\bf X}_{t-1}^{t} \in A|X_{k}, \overline{\bf V}_{k}^n) = \int \mathbb{P}_{\theta}({\bf X}_{t-1}^{t} \in A|X_{t-1}=x_{t-1}, \overline{\bf V}_{k}^n) p_\theta(x_{t-1}|X_{k}, \overline{\bf V}_{k}^n)\mu(dx_{t-1})$. Consequently, from the property of the total variation distance, the left hand side of (\ref{xi_bound}) is bounded by
\begin{equation*}
\begin{aligned}
 \left\| \int \mathbb{P}_\theta(X_{t-1} \in \cdot |X_{k}=x, \overline{\bf V}_{k}^n)\mu_1(dx) - \int \mathbb{P}_\theta(X_{t-1} \in \cdot |X_{k}=x, \overline{\bf V}_{k}^n)\mu_2(dx) \right\|_{TV}.
\end{aligned}
\end{equation*}
This is bounded by $\prod_{i=1}^{\lfloor(t-1-k)/p\rfloor} ( 1-\omega(\overline{\bf V}_{k+pi-p}^{k+pi-1}))$ from Corollary \ref{corollary_1}, and (\ref{xi_bound}) is proven.

We proceed to show parts (a)--(c). For part (a), observe that
\begin{align}
{\mathbb{E}}_\theta[\phi_{\theta t}^j|\overline{\bf V}_{-m}^n,X_{-m} =x_{-m}] &= \int \nabla_\theta^j \log p_{\theta}(Y_t,x_t|\overline{\bf Y}_{t-1}, x_{t-1}, W_t) p_{\theta} ({\bf x}^{t}_{t-1}|\overline{\bf V}_{-m}^{n},x_{-m}) \mu^{\otimes 2}(d{\bf x}^{t}_{t-1}), \label{phi_x0} \\
\mathbb{E}_\theta[\phi_{\theta t}^j|\overline{\bf V}_{-m}^n] &= \int \nabla_\theta^j \log p_{\theta}(Y_t,x_t|\overline{\bf Y}_{t-1}, x_{t-1}, W_t) p_{\theta} ({\bf x}^{t}_{t-1}|\overline{\bf V}_{-m}^{n}) \mu^{\otimes 2}(d{\bf x}^{t}_{t-1}), \label{phi_x}
\end{align}
and $p_{\theta}({\bf x}^{t}_{t-1}|\overline{\bf V}_{-m}^{n}) = \int p_{\theta}({\bf x}^{t}_{t-1} |\overline{\bf V}_{-m}^{n},x_{-m}) p_{\theta}(x_{-m}|\overline{\bf V}_{-m}^{n}) \mu(dx_{-m})$. 
Note that, for any conditioning set $\mathcal{G}$, we have $\mathbb{P}_\theta({\bf X}_{t-1}^t|\mathcal{G})=0$ if $q_\theta(X_{t-1},X_t)=0$. Therefore, the right hand side of (\ref{phi_x0}) and (\ref{phi_x}) are written as
\[
\int \nabla_\theta^j \log g_{\theta}(Y_t|\overline{\bf Y}_{t-1}, x_{t}, W_t) p_{\theta} ({\bf x}^{t}_{t-1}|\mathcal{F}) \mu^{\otimes 2}(d{\bf x}^{t}_{t-1})+ 
\int_{\mathcal{X}_\theta^+} \nabla_\theta^j \log q_{\theta}(x_{t-1},x_t) p_{\theta} ({\bf x}^{t}_{t-1}|\mathcal{F}) \mu^{\otimes 2}(d{\bf x}^{t}_{t-1}),
\]
with $\mathcal{F}=\{\overline{\bf V}_{-m}^{n},x_{-m}\},\{\overline{\bf V}_{-m}^{n}\}$. Therefore, part (a) follows from the property of the total variation distance and setting $k=-m$ in (\ref{xi_bound}). Parts (b) and (c) are proven similarly.

Part (d) holds if we show that, for all $-m+1 \leq t \leq n$ and $\overline{\bf V}_{-m}^n$,
\begin{equation} \label{xi_bound_2}
\begin{aligned}
&\sup_{A}\left| \mathbb{P}_{\theta}({\bf X}_{t-1}^{t} \in A |X_{-m}=x, \overline{\bf V}_{-m}^n) - \mathbb{P}_{\theta}({\bf X}_{t-1}^{t} \in A |X_{-m}=x, \overline{\bf V}_{-m}^{n-1}) \right| \\
& \leq \prod_{i=1}^{\lfloor (n-1-t)/p \rfloor} \left( 1-\omega(\overline{\bf V}_{n-2 -pi+1}^{n- 2 -pi+p}) \right). 
\end{aligned}
\end{equation} 
When $t \geq n-1$, (\ref{xi_bound_2}) holds trivially. When $t \leq n-2$, observe
that the time-reversed process $\{Z_{n-k}\}_{0 \leq k\leq n+m}$ is Markov. Hence, for any $-m+1 \leq t \leq k$, we have $\mathbb{P}_{\theta}({\bf X}_{t-1}^{t} \in A|X_{-m}, \overline{\bf V}_{-m}^k) = \int \mathbb{P}_{\theta}({\bf X}_{t-1}^{t} \in A|X_{t}=x_{t}, \overline{\bf V}_{-m}^t) p_\theta(x_{t}|X_{-m}, \overline{\bf V}_{-m}^k)\mu(dx_{t})$. Therefore, (\ref{xi_bound_2}) is proven similarly to (\ref{xi_bound}) by using Lemma \ref{lemma_dmr_39}(b). Part (e) is proven similarly by using Lemma \ref{lemma_dmr_39}(a).

We proceed to show parts (f)--(k). In view of (\ref{phi_x}), part (f) holds if we show that, for all $-m < s \leq t \leq n$,
\begin{equation} \label{xcov_bound}
\begin{aligned}
& \sup_{A,B \in \mathcal{B}(\mathcal{X}^2)}\left| \mathbb{P}_{\theta}({\bf X}_{t-1}^{t} \in A, {\bf X}_{s-1}^{s} \in B |\overline{\bf V}_{-m}^n) - \mathbb{P}_{\theta}({\bf X}_{t-1}^{t} \in A|\overline{\bf V}_{-m}^n) \mathbb{P}_{\theta}({\bf X}_{s-1}^{s} \in B|\overline{\bf V}_{-m}^n) \right| \\
& \quad \leq \prod_{i=1}^{\lfloor(t-1-s)/p\rfloor} \left( 1-\omega(\overline{\bf V}_{s+pi-p}^{s+pi-1}) \right).
\end{aligned}
\end{equation}
When $s \geq t-1$, (\ref{xcov_bound}) holds trivially because $\prod_{i=1}^{j}a_i=1$ when $j <i$. When $s \leq t-2$, observe that $\mathbb{P}_{\theta}({\bf X}_{t-1}^{t} \in A, {\bf X}_{s-1}^{s} \in B |\overline{\bf V}_{-m}^n) = \int_B \mathbb{P}_{\theta}({\bf X}_{t-1}^{t} \in A | {\bf X}_{s-1}^{s} = {\bf x}_{s-1}^{s},\overline{\bf V}_{-m}^n) p_\theta ( {\bf x}_{s-1}^{s} |\overline{\bf V}_{-m}^n) \mu^{\otimes 2} (d{\bf x}_{s-1}^{s})$ and $\mathbb{P}_{\theta}({\bf X}_{s-1}^{s} \in B |\overline{\bf V}_{-m}^n) = \int_B p_\theta ( {\bf x}_{s-1}^{s} |\overline{\bf V}_{-m}^n) \mu^{\otimes 2} (d{\bf x}_{s-1}^{s})$. Hence, in view of the Markov property of $\{X_k\}$ given $\overline{\bf V}_{-m}^n$, the left hand side of (\ref{xcov_bound}) is bounded by $\sup_A \sup_{x_s \in \mathcal{X}} | \mathbb{P}_{\theta}({\bf X}_{t-1}^{t} \in A|X_s = x_s,\overline{\bf V}_{-m}^n) - \mathbb{P}_{\theta}({\bf X}_{t-1}^{t} \in A|\overline{\bf V}_{-m}^n) |$. From (\ref{xi_bound}), this is bounded by $\prod_{i=1}^{\lfloor(t-1-s)/p\rfloor} ( 1-\omega(\overline{\bf V}_{s+pi-p}^{s+pi-1}) )$, and (\ref{xcov_bound}) follows. Part (g) is proven similarly by replacing the conditioning variable $\overline{\bf V}_{-m}^n$ with $(X_{-m}, \overline{\bf V}_{-m}^n)$. Parts (h)--(k) follow from (\ref{xi_bound}), (\ref{xi_bound_2}), and the relation $|\mathrm{cov}(X,Y|\mathcal{F}_1)-\mathrm{cov}(X,Y|\mathcal{F}_2)| \leq |E(XY|\mathcal{F}_1)-E(XY|\mathcal{F}_2)|+ |E(X|\mathcal{F}_1)-E(X|\mathcal{F}_2)|E(Y|\mathcal{F}_1) +E(X|\mathcal{F}_2)|E(Y|\mathcal{F}_1)-E(Y|F_2)|$. 
\end{proof}

The following lemma corresponds to Lemma 14 of DMR and shows that $\mathbb{E}_{\theta^*}[\Psi_{0,m,x}^1(\theta)]$, $\mathbb{E}_{\theta^*}[\Psi_{0,m,x}^2(\theta)]$, and $\mathbb{E}_{\theta^*}[\Gamma_{0,m,x}(\theta)]$ are continuous in $\theta$.
\begin{lemma}\label{lemma_14_dmr}
Assume Assumptions \ref{assn_a1}--\ref{assn_nabla_moment}. Then, for $j=1,2$, all $x \in \mathcal{X}$ and $m \geq p$, the functions $\Psi_{0,m,x}^j(\theta )$ and $\Gamma_{0,m,x}(\theta )$ are continuous in $\theta \in G$ $\mathbb{P}_{\theta^*}$-a.s. In addition,
\begin{align*}
(a) & \quad \lim_{\delta \to 0} \mathbb{E}_{\theta^*} \left[ \sup_{|\theta'-\theta| \leq \delta} | \Psi_{0,m,x}^j(\theta) - \Psi_{0,m,x}^j(\theta') |^{3-j} \right] =0, \\
(b) & \quad \lim_{\delta \to 0} \mathbb{E}_{\theta^*} \left[ \sup_{|\theta'-\theta| \leq \delta} | \Gamma_{0,m,x}(\theta) - \Gamma_{0,m,x}(\theta') | \right] =0.
\end{align*}
\end{lemma}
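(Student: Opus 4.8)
The plan is to follow Lemma 14 of DMR and to mirror the argument already used here for Lemma \ref{lemma_4_dmr}, reducing the statement to (i) pointwise continuity in $\theta$ and (ii) an application of the dominated convergence theorem. For fixed finite $m \geq p$, both $\Psi_{0,m,x}^j(\theta)$ and $\Gamma_{0,m,x}(\theta)$ are finite algebraic combinations of conditional expectations and conditional (co)variances of the finite sums $\sum_{t=-m+1}^{0}\phi_{\theta t}^j$ and $\sum_{t=-m+1}^{-1}\phi_{\theta t}^j$, each of which is a ratio whose numerator and denominator are integrals against the unnormalized joint density $q_\theta(x,x_{-m+1})\prod_{i=-m+2}^{0} q_\theta(x_{i-1},x_i)\prod_{i=-m+1}^{0} g_\theta(Y_i|\overline{\bf Y}_{i-1},x_i,W_i)$ over the hidden states ${\bf x}_{-m+1}^{0}$. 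Continuity of $\Psi_{0,m,x}^j$ and $\Gamma_{0,m,x}$ will follow once continuity of these ratios is established, and then (a) and (b) will follow by dominating the modulus of continuity and letting $\delta\to0$.

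First I would establish pointwise continuity. The integrands $\nabla_\theta^j\log q_\theta(x_{i-1},x_i)$ and $\nabla_\theta^j\log g_\theta(Y_i|\overline{\bf Y}_{i-1},x_i,W_i)$ are continuous in $\theta\in G$ by Assumption \ref{assn_distn}(a), and the unnormalized density is continuous in $\theta$ and dominated by $(\sigma_+^0)^{m}\prod_{i=-m+1}^{0} b_+(\overline{\bf Y}_{i-1}^i,W_i)$, so both numerator and denominator are continuous in $\theta$ by bounded convergence, exactly as in the proof of Lemma \ref{lemma_4_dmr}. The essential point is that the denominator (the normalizing constant) is bounded away from zero $\pstar$-a.s.: since $m\geq p$, integrating out $p$ consecutive hidden states produces the $p$-step kernel, which is at least $\sigma_-$ by Assumption \ref{assn_a1}(d), while $g_\theta$ is bounded below by Assumption \ref{assn_a3}; hence the ratios, and therefore $\Psi_{0,m,x}^j(\theta)$ and $\Gamma_{0,m,x}(\theta)$, are continuous in $\theta$ $\pstar$-a.s.

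To obtain (a) and (b), I would dominate the modulus of continuity by a fixed integrable envelope and invoke dominated convergence. As in the proofs of Lemmas \ref{lemma_psi_bound} and \ref{lemma_gamma_cgce}, conditional expectations of $\sum_t\phi_{\theta t}^j$ are bounded by $\sum_{t=-m+1}^{0}|\phi_t^j|_{\infty}$ and conditional (co)variances of $\sum_t\phi_{\theta t}$ by $\sum_{-m+1\leq s,t\leq 0}|\phi_s|_{\infty}|\phi_t|_{\infty}$, so that $\sup_{|\theta'-\theta|\leq\delta}|\Psi_{0,m,x}^j(\theta)-\Psi_{0,m,x}^j(\theta')|\leq 4\sum_{t=-m+1}^{0}|\phi_t^j|_{\infty}$ and $\sup_{|\theta'-\theta|\leq\delta}|\Gamma_{0,m,x}(\theta)-\Gamma_{0,m,x}(\theta')|\leq \mathcal{C}\sum_{-m+1\leq s,t\leq 0}|\phi_s|_{\infty}|\phi_t|_{\infty}$, uniformly in $\delta$. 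Because $m$ is finite and $|\phi_t^j|_{\infty}\in L^{3-j}(\pstar)$ by Assumptions \ref{assn_distn}(b)(c), the first envelope lies in $L^{3-j}(\pstar)$, and by the Cauchy--Schwarz inequality the second lies in $L^1(\pstar)$. By the continuity established above, the suprema converge to $0$ $\pstar$-a.s.\ as $\delta\to0$, so dominated convergence yields (a) and (b).

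The main obstacle is the treatment of the region where $q_\theta(x_{i-1},x_i)=0$, on which $\nabla_\theta^j\log q_\theta$ is undefined and the support set $\mathcal{X}_\theta^+$ may in principle vary with $\theta$. As in the proof of Lemma \ref{lemma_ijl}, the resolution is that the joint conditional density $p_\theta({\bf x}_{t-1}^{t}|\cdot)$ carries a factor $q_\theta(x_{t-1},x_t)$ and hence vanishes wherever $q_\theta=0$, so the contribution of the $\log q_\theta$ term is taken over $\mathcal{X}_\theta^+$ only; Assumption \ref{assn_distn}(b) supplies a uniform bound on $|\nabla_\theta^j\log q_\theta|$ over $\mathcal{X}_\theta^+$, which both dominates this term in the envelopes above and permits the bounded-convergence step to proceed. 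Verifying that this uniform domination delivers genuine continuity of the ratios despite the possibly $\theta$-dependent support is the delicate part of the argument; everything else is routine.
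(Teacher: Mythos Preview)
Your proposal is correct and follows essentially the same approach as the paper's proof: both establish pointwise continuity of the conditional expectations (and covariances) by writing them as ratios of integrals against the unnormalized joint density, using $m\geq p$ and Assumptions \ref{assn_a1}(d), \ref{assn_a3} to bound the denominator away from zero, and then apply dominated convergence with the envelope $\big(\sum_{t=-m+1}^{0}|\phi_t^j|_{\infty}\big)^{3-j}$ (resp.\ $\sum_{s,t}|\phi_s|_{\infty}|\phi_t|_{\infty}$), which lies in $L^1(\pstar)$ by Assumption \ref{assn_distn}(b)(c). Your explicit discussion of restricting the $\nabla_\theta^j\log q_\theta$ term to $\mathcal{X}_\theta^+$ via the factor $q_\theta(x_{t-1},x_t)$ in the conditional density is exactly how the paper (implicitly, via Lemma \ref{lemma_ijl}) handles this point.
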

\begin{proof}
The proof is similar to the proof of Lemma 14 in DMR. For brevity, we suppress $W_t$ and ${\bf W}^{0}_{-m}$ from $\phi^j(\theta,\overline{\bf Z}^t_{t-1},W_t)$ and the conditioning set. We prove part (a) first. Note that $\sup_{\theta \in G}\sup_{x \in \mathcal{X}}|\Psi_{0,m,x}^j(\theta)|^{3-j} \leq ( 2 \sum_{t=-m+1}^0|\phi_t^j|_{\infty})^{3-j} \in L^{1}(\mathbb{P}_{\theta^*})$. Hence, the stated result holds if, for $m \geq p$ and $-m+1 \leq t \leq 0$,
\[
\lim_{\delta \to 0} \sup_{|\theta'-\theta| \leq \delta} \left| \mathbb{E}_{\theta'}[\phi^j(\theta',\overline{\bf Z}^t_{t-1})| \overline{\bf Y}^{0}_{-m}, X_{-m}= x] - \mathbb{E}_{\theta}[\phi^j(\theta,\overline{\bf Z}^t_{t-1})| \overline{\bf Y}^{0}_{-m}, X_{-m}= x] \right| =0 \quad \mathbb{P}_{\theta^*}\text{-a.s.}
\]
Write
\begin{equation}\label{dmr_42}
\mathbb{E}_{\theta}[\phi^j(\theta,\overline{\bf Z}^t_{t-1})| \overline{\bf Y}^{0}_{-m}, X_{-m}= x] = \int \phi^j(\theta,\overline{\bf Z}^t_{t-1}) p_\theta({\bf X}^t_{t-1} = {\bf x}^t_{t-1}| \overline{\bf Y}^{0}_{-m}, X_{-m}= x) \mu^{\otimes 2}(d{\bf x}^t_{t-1}).
\end{equation}
For all ${\bf x}^t_{t-1}$ such that $p_\theta({\bf x}^t_{t-1}| \overline{\bf Y}^{0}_{-m}, X_{-m}= x) >0$, $\phi^j(\theta,{\bf x}^t_{t-1},\overline{\bf Y}^t_{t-1})$ is continuous in $\theta$ and bounded by $|\phi_t^j|_{\infty}< \infty$. Furthermore, 
\[
p_\theta({\bf X}^t_{t-1} = {\bf x}^t_{t-1}| \overline{\bf Y}^{0}_{-m}, X_{-m}= x) = \frac{p_\theta({\bf X}^t_{t-1} = {\bf x}^t_{t-1}, {\bf Y}^{0}_{-m+1}|\overline{\bf Y}_{-m}, X_{-m}= x)}{p_\theta({\bf Y}^{0}_{-m+1}|\overline{\bf Y}_{-m}, X_{-m}= x)}.
\]
Here, $p_\theta({\bf X}^t_{t-1} = {\bf x}^t_{t-1}, {\bf Y}^{0}_{-m+1}|\overline{\bf Y}_{-m}, X_{-m}= x)$ is continuous in $\theta$  (see (\ref{p_y_jm})) and bounded from above by $(\sigma_+^0)^m \prod_{i=-m+1}^0 b_+(\overline{\bf Y}_{i-1}^i)$, and $p_\theta({\bf Y}^{0}_{-m+1}|\overline{\bf Y}_{-m}, X_{-m}= x)$ is continuous in $\theta$ and bounded from below by $\sigma_-^{\lfloor m/p\rfloor} \prod_{t=-m+1}^0 \int \inf_{\theta \in G} g_\theta(Y_t|\overline{\bf Y}_{t-1},x_t ) \mu(dx_t) > 0$. Consequently,  $p_\theta({\bf X}^t_{t-1} = {\bf x}^t_{t-1}| \overline{\bf Y}^{0}_{-m}, X_{-m}= x)$ is continuous in $\theta$ and bounded from above uniformly in $\theta \in G$ $\mathbb{P}_{\theta^*}$-a.s., and the integrand on the right hand side of (\ref{dmr_42}) is continuous in $\theta$ and bounded from above uniformly in $\theta \in G$  $\mathbb{P}_{\theta^*}$-a.s. From the dominated convergence theorem, the left hand side of (\ref{dmr_42}) is continuous in $\theta$  $\mathbb{P}_{\theta^*}$-a.s, and part (a) is proven.

Part (b) holds if, for $-m+1 \leq s \leq t \leq 0$,
\begin{align*}
\lim_{\delta \to 0} \sup_{|\theta'-\theta| \leq \delta} & \left| \text{cov}_{\theta'}[\phi(\theta',\overline{\bf Z}^s_{s-1}),\phi(\theta',\overline{\bf Z}^t_{t-1})| \overline{\bf Y}^{0}_{-m}, X_{-m}= x] \right| \\
& \quad \left. - \text{cov}_{\theta}[\phi(\theta',\overline{\bf Z}^s_{s-1}),\phi(\theta,\overline{\bf Z}^t_{t-1})| \overline{\bf Y}^{0}_{-m}, X_{-m}= x] \right| =0 \quad \mathbb{P}_{\theta^*}\text{-a.s.}
\end{align*}
This holds by a similar argument to part (a), and part (b) follows.
\end{proof}

\end{appendices}

\bibliography{markov}

\end{document}